%

\documentclass[11pt]{article}
\usepackage{latexsym,color,amsmath,amsthm,amssymb,amscd,amsfonts}
\usepackage{amsmath}
\usepackage{comment}
\usepackage[matrix,arrow,ps]{xy}
\usepackage{tikz-cd}
\usepackage[affil-it]{authblk}
\setlength{\textwidth}{6.0in} \setlength{\evensidemargin}{0.25in}
\setlength{\oddsidemargin}{0.25in} \setlength{\textheight}{9.0in}
\setlength{\topmargin}{-0.5in} \setlength{\parskip}{2mm}
\setlength{\baselineskip}{1.7\baselineskip}

\newtheorem{lemma}{Lemma}[section]
\newtheorem{proposition}[lemma]{Proposition}
\newtheorem{remark}[lemma]{Remark}

\newtheorem{example}[lemma]{Example}

\def\R{{\mathbb R}}



\def\upddots{\mathinner{\mkern 1mu\raise 1pt \hbox{.}\mkern 2mu
\mkern 2mu \raise 4pt\hbox{.}\mkern 1mu \raise 7pt\vbox {\kern 7
pt\hbox{.}}} }


\newcommand{\spn}{{Sp_{2n}(\F)}}

\newcommand{\slt}{{\rm{SL}_{2}({\rm F})}}
\newcommand{\sltt}{{\rm{G}}}
\newcommand{\mslt}{\widetilde{\rm{G}}}

\newcommand{\F}{{\rm F}}

\newcommand{\Of}{\mathbb O_{\F}}
\newcommand{\Pf}{\mathbb P_{\F}}

\newcommand{\mb}{{\widetilde{B}}}

\newcommand{\N}{\mathbb N}
\newcommand{\Z}{\mathbb Z}

\newcommand{\half}{\frac{1}{2}}

\newcommand{\ab} {|\!|}

\newcommand{\Q}{\mathbb Q}
\newcommand{\C}{\mathbb C}

\def\>{\rangle}
\def\<{\langle}

\newtheorem{lem}[lemma]{Lemma}
\newtheorem{thm}[lemma]{Theorem}
\newtheorem{cor}[lemma]{Corollary}

\newtheorem{deff}[lemma]{Definition}
\newtheorem{prop}[lemma]{Proposition}

\numberwithin{equation}{section}

\newcommand{\msl}{\widetilde{{\rm SL}_2(\F)}}

\def\dotunion{
\def\dotunionD{\bigcup\kern-9pt\cdot\kern5pt}
\def\dotunionT{\bigcup\kern-7.5pt\cdot\kern3.5pt}
\mathop{\mathchoice{\dotunionD}{\dotunionT}{}{}}} \setcounter
{section} {-1}
\date{}
\newcommand\blfootnote[1]{%
  \begingroup
  \renewcommand\thefootnote{}\footnote{#1}%
  \addtocounter{footnote}{-1}%
  \endgroup
}
\makeatletter
\def\@maketitle{%
  \newpage
  \null
  \vskip 2em%
  \begin{center}%
  \let \footnote \thanks
    {\Large\bfseries \@title \par}%
    \vskip 1.5em%
    {\normalsize
      \lineskip .5em%
      \begin{tabular}[t]{c}%
        \@author
      \end{tabular}\par}%
    \vskip 1em%
    {\normalsize \@date}%
  \end{center}%
  \par
  \vskip 1.5em}
\makeatother

\begin{document}
\title{On Shahidi local coefficients matrix}
\author{Dani Szpruch
  \thanks{Correspondence to be sent to: \texttt{dszpruch@openu.ac.il}}}
\affil{Department of Mathematics and Computer Science, Open University of Israel, 43107 Raanana, Israel.}
\maketitle
\centerline {\large \it To Professor  Freydoon Shahidi on his 70th birthday}
\begin{abstract}
In this article we define and study the Shahidi local coefficients matrix associated with a genuine principal series representation ${\rm I}(\sigma)$ of  an $n$-fold cover of $p-$adic $\slt$ and an additive character $\psi$. The conjugacy class of this matrix is an invariant of the inducing representation $\sigma$ and $\psi$ and its entries are linear combinations of Tate or Tate type $\gamma$-factors. We relate these entries to functional equations associated with linear maps defined on the dual of the space of Schwartz functions. As an application we give new formulas for the Plancherel measures and use these to relate  principal series representations of different  coverings of $\slt$. While we do not assume that the residual characteristic of ${\rm F}$ is relatively prime to $n$ we do assume that $n$ is not divisible by 4.
\end{abstract}
\blfootnote{ 2010 Mathematics Subject Classification: 22E50. Key words: $p$-adic covering groups, metaplectic groups, Shahidi local coefficients, local factors, functional equations.}
{\bf \large Acknowledgements.}

We would like to thank Fan Gao for his remarks on earlier versions of the manuscript. We have essentially an equal role in writing Section \ref{fangao}. We would  also like to thank Alexander Burstein, Francois Ramaroson, Sankar Sitaraman and Valentin Buciumas for useful discussions on the subject matter. Finally we would like to thank the referee for numerous suggestions which significantly  improved the style and clarity of this paper. At the time this manuscript was prepared, the author was partially supported by a Simons Foundation Collaboration Grant 426446.

\section{Introduction}
Let ${\rm F}$ be a $p$-adic field containing the full group of $n^{th}$ roots of 1 and let $\widetilde{\rm{G}}^{(n)}=\mslt$ be the $n$-fold cover of ${\rm G}=\slt$ constructed by Kubota in \cite{Kub}. For a genuine principal series representation of $\mslt$  and an additive character of ${\rm F}$ we associate what we call an Slcm, a Shahidi local coefficients matrix. We compute it and use it to study the Plancherel measure. The study contained in this paper is a generalization  and extension of  the study initiated in \cite{GoSz}. While we do not assume that the residual characteristic of ${\rm F}$ is relatively prime to $n$ we do assume for most of this paper that $n$ is not divisible by 4. To ease the exposition in this introduction we first discuss our results in the case where $n$ is odd.

A genuine principal series representation of $\mslt$ is a representation parabolically induced from $(\sigma,V)$, a genuine smooth irreducible  representation of  $\widetilde{\rm{H}}$, the inverse image of the diagonal subgroup of $\sltt$ inside $\mslt$. The isomorphism class of $\sigma$ is determined by its central character, $\chi_\sigma$. On the other hand, $\chi_\sigma$ is determined by a restriction of a character $\chi$ of ${\rm F^*}$ to ${{\rm F^*}}^n$. Let ${\rm I}(\sigma_s)$ be a genuine principal series representation of $\mslt$ induced from $\sigma_s$ (here $s \in \C$ is the usual complex parameter), let $\psi$ be a non-trivial character of ${\rm F}$  and  let $\operatorname{Wh}_\psi \bigl({\rm I}(\sigma_s)\bigr)$ be the finite dimensional space of $\psi$-Whittaker functionals on ${\rm I}(\sigma_s)$.
Let $${\rm A}_w(\sigma_s):{\rm I}\bigl(\sigma_s \bigr) \rightarrow{\rm I}\bigl((\sigma_s)^w \bigr)$$ be the standard intertwining operator. By duality, ${\rm A}_w(\sigma_s)$ induces a map
$${\rm A}_w^\psi(\sigma_s):\operatorname{Wh}_\psi \bigl({\rm I}\bigl((\sigma_s)^w \bigr)\bigr)\rightarrow \operatorname{Wh}_\psi \bigl({\rm I}(\sigma_s)\bigr).$$
In this paper we define the Slcm associated with $\sigma$ and $\psi$ to be a certain matrix representing ${\rm A}_w^\psi(\sigma_s)$. We use the fact that an integral of Jacquet-type identifies both $\operatorname{Wh}_\psi \bigl({\rm I}\bigl((\sigma_s)^w \bigr)\bigr)$ and $\operatorname{Wh}_\psi \bigl({\rm I}(\sigma_s)\bigr)$ with $V^*$, the space of linear functionals on $V$, to show that the conjugacy class of the matrix defined here is an invariant of $\sigma$ and $\psi$. In particular, its trace and determinant, denoted by $T(\sigma,s,\psi)$ and $D(\sigma,s,\psi)$ respectively are well defined invariants of $\sigma$ and $\psi$.

In the case where $n$ is prime to the residual characteristic of ${\rm F}$,  a closely related matrix associated with unramified representations of coverings of ${\rm GL}_n({\rm F})$ was computed by Kazhdan and Patterson in \cite{KP} and utilized for the study of the distinguished representations, see also \cite{Gao16}. In this context this matrix is sometimes called a scattering matrix.  McNamara generalized in \cite{Mc2} the computations of this matrix to the context of unramified representations of unramified coverings of reductive $p$-adic groups. In recent years this matrix appears frequently in the study of metaplectic groups. For example, it appears in the construction given by Chinta-Offen in \cite{CO} and by McNamara in \cite{Mc2}, of the metaplectic Casselman-Shalika formula. Recently, Brubaker, Buciumas and Bump  showed that this matrix is equal to a certain twisted R-matrix, see \cite{BBB}.  The matrices in \cite{KP} and \cite{Mc2} were computed using an explicit realization of ${\rm I}\bigl(\sigma_s \bigr)$ which is not available if $n$ is not  prime to the residual characteristic of ${\rm F}$. It is also unclear how to extend the computations in \cite{KP} and \cite{Mc2}  to ramified representations. We take a different approach to this computation originating from the work of Arit\"urk, \cite{Ariturk},  who studied unramifed genuine principal series representations of $\widetilde{{\rm SL}_{2}({\rm F})}^{(3)}$. Moreover, the conjugacy classes of the matrices introduced in this paper are not identical to the  conjugacy classes of the matrices in \cite{Mc2}. We had to modify the construction in \cite{KP} and \cite{Mc2} to ensure that the conjugacy class of an Slcm depends only on the isomorphism class of $\sigma$ and on $\psi$, see Remark \ref{ichange} for the precise list of modifications.

In the $n=1$ case, the matrix presented here is the reciprocal of the Shahidi local coefficient defined in  \cite{Sha1}, namely $\gamma(1-s,\chi^{-1},\psi)$, where $\gamma(s,\chi,\psi)$ is the Tate $\gamma$-factor. As we explain below, once constructed in a natural way, the entries of this matrix for $n>1$ are distinguished linear combinations of $\gamma$-factors. Thus, the results in this paper give new and simple interpretation to an object otherwise known to be complicated and mysterious. For example, as we demonstrate in Section \ref{uram examples}, the fact that most of the entries of the scattering matrix appearing in  \cite{Mc2} vanish is explained by a simple property of $\epsilon$-factors. Moreover, we  show  that the  space spanned by the entries  of an Slcm associated with $\sigma$ and $\psi$ is always contained in the space of rational functions in $q^{-s}$, where $q$ is the cardinality of the residue field. In fact it equals
$$\operatorname{span} \{ s\mapsto \gamma (1-s,\chi^{-1}\eta,\psi) \mid \, \eta \in\widehat{ {\rm F^*}/{{\rm F^*}}^n} \}$$
where $\widehat{ {\rm F^*}/{{\rm F^*}}^n}$ is the group of characters of ${\rm F^*}$ of order dividing $n$.

We now give some details regarding the computation of the Slcms. Recall that the $n^{th}$ power Hilbert symbol
$$( \cdot, \cdot)_n:{\rm F^*} \times {\rm F^*} \rightarrow \mu_n$$
plays a fundamental role in the construction of $\mslt$. It defines a non-degenerate anti-symmetric bilinear form on ${\rm F^*}/{{\rm F^*}}^n$. A  main tool in this paper is the existence of a Lagrangian decomposition $L=(\overline{J}, \overline{K})$ of ${\rm F^*}/{{\rm F^*}}^n$: maximal isotropic subgroups $\overline{J}$ and $\overline{K}$ of ${\rm F^*}/{{\rm F^*}}^n$ such that $\overline{J} \times \overline{K}={\rm F^*}/{{\rm F^*}}^n$ and such that the Hilbert symbol identifies $\overline{K}$ with the dual group of $\overline{J}$. For a realization of ${\rm I}(\sigma_s)$ associated with $\overline{J}$ and a basis for $V^*$ associated with $\overline{K}$ , an Slcm, $\tau_{_L}(\cdot,\cdot,\chi,s,\psi)$ is a map defined on $\overline{K} \times \overline{K}$. In Theorem \ref{metasha} we show that
$$\tau_{_L}(a,b,\chi,s,\psi)=\gamma_{_J}\bigl(1-s, \chi^{-1}\eta_{ab},\psi,ab^{-1} \bigr),$$
where for odd $n$, $\eta_{ab}$ is the character of ${\rm F^*}$ defined by $x\mapsto (x,ab)_n$ and where $\gamma_{_J}(s,\chi,\psi,k)$ is the partial $\gamma$-factor we define in Section \ref{T and PT} by
$$\gamma_{_J}(s,\chi,\psi,k)= {(\# \overline{J})}^{-1} \sum_{j \in \overline{J}} \gamma \bigl(s,\chi\eta_j,\psi\bigr)\eta_j(k^{-1})=\int_{Jk^{-1}} \chi^{-1}(y)\ab y \ab^{1-s} \psi(y)\, d_\psi^*y.$$
Here $J$ is the pullback of $\overline{J}$ to ${\rm F^*}$ and $k \in \overline{K}$. Recall that Tate $\gamma$-factor arises from a certain functional equation  associated with a linear map defined on a one dimensional subspace of ${{\rm S}({\rm F})}^*$, the space of linear functionals on the space of Schwartz functions. In Section \ref{T and PT},  we show  that the partial $\gamma$-factors arise from a certain functional equation associated with a linear map defined on an $n$-dimensional subspace of ${{\rm S}({\rm F})}^*$. Moreover, the definition of the partial $\gamma$-factors involves  partial $\zeta$-integrals generalizing the $\zeta$-integrals which appear in  the definition of Tate $\gamma$-factor.

As an application of our approach we give in  Theorem \ref{new plan} a new formula for $\mu_n(\sigma,s)$, the Plancherel measure associated with ${\rm I}(\sigma_s)$. This  meromorphic invariant of $\sigma$ is defined by
$${\rm A}_{w^{-1}}\bigl((\sigma_s)^w\bigr )\circ {\rm A}_{w}\bigl(\sigma_s \bigr)=\mu_n(\sigma,s)^{-1}Id.$$
In the linear case, the Plancherel measure is a product of two Shahidi local coefficients, see Corollary 5.3.1 in \cite{Shabook}. In this paper we generalize this result and relate $T(\sigma,s,\psi)$ and $D(\sigma,s,\psi)$ to $\mu_n(\sigma,s)$. Moreover, utilizing our formula for the matrix of local coefficients we prove that $\mu_n(\sigma,s)$ is the harmonic mean of Plancherel measures of principal series representations of the linear group $\sltt$, namely we prove that

$$\mu_n(\sigma,s)^{-1}=[{\rm F^*}:{{\rm F^*}}^n]^{-1}\sum_{x \in {\rm F^*}/ {{\rm F^*}}^n} \mu_{1} \bigl(\chi\eta_x,s \bigr)^{-1}.$$
In our proof we use different realizations of the principal series representation simultaneously, see \eqref{aver}. We use this identity along with a global-local argument of Gao, \cite{Gao17}, and show that up to an explicit positive constant, the Plancherel measure is a quotient of $L$-functions. A similar result for parabolic induction on quasi-split reductive groups was conjectured by Langlands, \cite{Langla}, and proven for generic inducing data by Shahidi in \cite{Sha90}. We also use this formula  to find all reducible genuine principal series representations of $\mslt$ induced from a unitary data and to establish a relation between the Plancherel measures of matching representations of $\mslt^{(n)}$ and $\mslt^{(m)}$ provided that $m$ divides $n$.

Our results for $n\equiv 2 \, (\operatorname{mod }4)$ are similar. In this case the $\gamma$-factor is replaced by the metaplectic $\widetilde{\gamma}$-factor defined by us in \cite{Sz3} and $\sltt$ is replaced by $\mslt^{(2)}$. The role played by  $\widetilde{\gamma}$ in the $n\equiv 2 \, (\operatorname{mod }4)$ case is as fundamental and important as the role played by Tate $\gamma$-factor in the odd case. We have shown that no reducibilities occur on the unitary axis when $n\equiv 2 \, (\operatorname{mod }4)$. This distinction between even and odd fold covers of $\sltt$ is compatible with the fact that the dual group of $\mslt^{(n)}$ is ${\rm PGL}_2 $ if $n$ is odd and ${\rm SL} _2 $ if $n$ is even, see Page 727 of \cite{FL}.

In \cite{GanGao} and \cite{Weissman16},  Gan-Gao and Weissman raised the question whether the Langlands-Shahidi method can be extended to metaplectic groups other then the double cover of $\spn$. We believe that our results given in this paper lay the foundation for the local portion of this theory. Using the results of this paper,  a new invariant was defined in \cite{GSS}:
$$S_w(\sigma,s,\psi)=D_w(\sigma,s,\psi) \cdot \mu_n(\sigma,s)^ {e(n,F)}.$$
Here $e(n,f)$ is an explicit constant depending on $n$ and ${\rm F}$. Moreover, an evidence was given suggesting that $S_w(\sigma,s,\psi)$ is the $\gamma$-factor associated with a genuine principal series representation of $\msl$.

The paper is organized as follows: after some preparations, we define in Section \ref{funequa} the partial $\gamma$ and $\widetilde{\gamma}$-factors. In Section \ref{staninfo} we collect some information regarding the genuine principal series representations and their Whittaker functionals. Section \ref{meta sha} is devoted to the definition and study of the Slcms. Finally, in Section \ref{planme} we discuss the Plancherel measure. We finish Section \ref{planme} with a remark on the $n\equiv 0 \, (\operatorname{mod }4)$ case.
\section{General notation}

Let ${\rm F}$ be a finite extension of $\Q_p$. Denote by $q$ the cardinality of its residue field. Denote by $\Of$ its ring of integers. Fix $\varpi$, a generator of $\Pf$, the maximal ideal of $\Of$. We normalize the absolute value on ${\rm F}$ such that $\ab \varpi \ab =q^{-1}$.

Let $\psi$ be a non-trivial character of ${\rm F}$. We shall denote by $d_\psi x$ the Haar measure on ${\rm F}$ which is self dual with respect to $\psi$ and we set $d^*_\psi x=\frac {d_\psi x}{\ab x \ab}.$ It is a Haar measure on ${\rm F^*}$. For $a \in {\rm F^*}$ let $\psi_a$ be the character of ${\rm F}$ given by $x \mapsto \psi(ax)$. We define $e(\psi)$, the conductor of $\psi$, to be the smallest positive integer $k$ such that $\psi$ is trivial on $\Pf^k$. We say that $\psi$ is normalized if $e(\psi)=0$. For a ramified character $\chi$ of ${\rm F^*}$ we define $e(\chi)$, the conductor of $\chi$, to be the smallest integer $k$ such that $\chi$ is trivial on $1+\Pf^k$. For an unramified character $\chi$ of ${\rm F^*}$ we set $e(\chi)=0$.

Last, for any group $T$ and a character $\alpha$ of $T$ we denote by $\C_\alpha$ the one dimensional complex space on which $T$ acts by $\alpha$.
\section{Functional equations} \label{funequa}
\subsection{ A Lagrangian decomposition} \label{npower}
 Fix an integer $n \geq 1$  (starting at Section  \ref{modelsection} we shall assume that $n$ is not divisible by 4). We shall assume that ${\rm F}^*$ contains the full group of $n^{th}$ roots of 1. Denote this cyclic group by $\mu_n$. We identify $\mu_n$ with the group of $n^{th}$ roots of 1 in $\C^*$ and suppress this identification.

For  $m\in \N$ which divides $n$ let $$( \cdot, \cdot)_{m}:{\rm F}^* \times {\rm F}^* \rightarrow \mu_{m}$$ be the $m^{th}$ power Hilbert symbol. Recall that the Hilbert symbol is an anti-symmetric bilinear form and that its kernel in each argument is ${{\rm F}^*}^{m}$. Hence, it gives rise to a non-degenerate bilinear form on ${\rm F}^* / {{\rm F}^*}^{m} \times {\rm F}^* / {{\rm F}^*}^{m}$. In particular, it identifies ${\rm F}^* / {{\rm F}^*}^{m}$ with its dual, $\widehat{{\rm F}^* / {{\rm F}^*}^{m}}$, which may also  be identified with the group of  characters of ${\rm F}^*$ whose order divides $m$. Note that since ${\rm F}^*$ contains $\mu_{m}$
\begin{equation} \label{index} [{\rm F}^*:{{\rm F}^*}^{m}]=m^2 \ab m \ab^{-1}, \end{equation} see Page 48 of \cite {Lang} for example.
For all   $x \in {\rm F}^*$, \begin{equation} \label{x with x} (x,x)_{m}=(-1,x)_{m}. \end{equation}
Also, if $n=m l$ then
\begin{equation} \label{FV fact} (x,y)_n^{m}=(x,y)_{l}. \end{equation}
See Section 5 in Chapter IV of \cite{FV} for example. Denote $$d=\begin{cases} n &  n \, \operatorname{is} \,  \operatorname{odd }; \\  \frac {n} {2}  &  n \, \operatorname{is} \,  \operatorname{even. }  \end{cases}$$
Observe that ${\rm F}^*$ contains $\mu_{2d}$. Indeed, if $n$ is even there is nothing to explain while if $n$ is odd we note that
$\mu_n \cup -\mu_n=\mu_{2n}.$ Thus, $-1 \in {{\rm F}^*}^d.$ Using \eqref{x with x} we conclude that for all $x \in {\rm F}^*$
\begin{equation} \label{x with x is 1}(x,x)_d=1. \end{equation}
For $x \in {\rm F}^*$ let $\eta_x$ be the character of ${\rm F}^*$ defined by
$$y \mapsto \eta_x(y)=(x,y)_d.$$
Note that the map $x \mapsto \eta_x$ factors through ${\rm F}^*/{{\rm F}^*}^d$. When convenient we  shall also think of $\eta_x$ as an element in  $\widehat{{\rm F}^*/{{\rm F}^*}^d}$. \begin{deff} A subgroup $\overline{J}$ of ${\rm F}^*/{{\rm F}^*}^d$ (of ${\rm F}^*$) is called a Lagrangian subgroup if
$\overline{J} =\bigcap_{x \in {\overline{J}}} \operatorname{ker}(\eta_x).$ \end{deff}
\begin{deff}Let $\overline{J}$ and $\overline{K}$ be two Lagrangian subgroups of ${\rm F}^*/{{\rm F}^*}^d$. We say that $L=(\overline{J},\overline{K})$ is a Lagrangian decomposition  of ${\rm F}^*/{{\rm F}^*}^d$ if ${\rm F}^*/{{\rm F}^*}^d$ is a direct product of $\overline{J}$ and $\overline{K}$ and the map $k \mapsto {\eta_k \! \mid}_{\overline{J}}$ is an isomorphism from $\overline{K}$ to the dual group of $\overline{J}$ (in particular $\overline{J} \cong \overline{K}$). \end{deff}
Note that if $(\overline{J},\overline{K})$  is a Lagrangian decomposition of ${\rm F}^*/{{\rm F}^*}^d$ then by \eqref{index} we have
\begin{equation} \nonumber \# \overline{J}=\# \overline{K} =\sqrt{[{\rm F}^*:{{\rm F}^*}^d]}=d \ab d \ab^{-\half}. \end{equation}
\begin{lem} \label{lang decomp} A Lagrangian decomposition of ${\rm F}^*/{{\rm F}^*}^d$ exists. Furthermore, if $(\overline{J},\overline{K})$ and $(\overline{J}',\overline{K}')$ are two Lagrangian decompositions  of  ${\rm F}^*/{{\rm F}^*}^d$ then there exists an automorphism $\theta$ of ${\rm F}^*/{{\rm F}^*}^d$ preserving $(\cdot, \cdot)_d$ such that $\theta(\overline{J})=\overline{J}'$ and $\theta(\overline{K})=\overline{K}'$.
\end{lem}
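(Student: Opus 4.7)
The plan is to view $V := {\rm F}^*/{{\rm F}^*}^d$ as a finite abelian group equipped with a non-degenerate alternating bilinear form $(\cdot,\cdot)_d$ valued in $\mu_d$. Non-degeneracy follows from the fact that the kernel of the Hilbert symbol on each argument equals ${{\rm F}^*}^d$, while the alternating property is \eqref{x with x is 1}. Since the pairing identifies $V$ with its Pontryagin dual $\widehat{V}$, for every subgroup $H\subseteq V$ one has $|H|\cdot|H^\perp|=|V|$.

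For the existence of a Lagrangian subgroup I would take $\overline{J}\subseteq V$ to be any maximal isotropic subgroup. If there were $x\in\overline{J}^\perp\setminus\overline{J}$, then $(x,x)_d=1$ together with the isotropy of $\overline{J}$ shows that the subgroup generated by $\overline{J}$ and $x$ is still isotropic, contradicting maximality; hence $\overline{J}=\overline{J}^\perp$ and $|\overline{J}|^2=|V|$. To produce a complementary Lagrangian I plan to construct a symplectic basis by induction on $|V|$. First reduce to an $\ell$-primary component $V_\ell$, noting that $V_\ell$ and $V_{\ell'}$ are orthogonal when $\ell\neq\ell'$, since $(x,y)_d$ has order dividing $\gcd(\operatorname{ord} x,\operatorname{ord} y)=1$. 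In $V_\ell$, choose $v$ of maximal order $\ell^a$ and, by non-degeneracy of the restricted pairing, $w$ with $(v,w)_d$ a primitive $\ell^a$-th root of unity. The subgroup $H=\langle v\rangle\times\langle w\rangle$ is a symplectic direct summand of $V_\ell$: an element in $H\cap H^\perp$ must pair trivially with both $v$ and $w$ and is therefore trivial, while any $x\in V_\ell$ has order dividing $\ell^a$, so $(v,x)_d$ and $(w,x)_d$ are $\ell^a$-th roots of unity, and a suitable element of $H$ can be subtracted from $x$ to land in $H^\perp$. Recursing on $H^\perp$ yields symplectic pairs $\{(v_i,w_i)\}$, and then $\overline{J}:=\bigoplus_i\langle v_i\rangle$, $\overline{K}:=\bigoplus_i\langle w_i\rangle$ form a Lagrangian decomposition of $V$.

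For the uniqueness up to form-preserving automorphism, let $(\overline{J},\overline{K})$ and $(\overline{J}',\overline{K}')$ be two Lagrangian decompositions. The identification $\overline{K}\cong\widehat{\overline{J}}$ coming from the decomposition, combined with the fact that $G\cong\widehat{G}$ for every finite abelian group $G$, yields $V\cong\overline{J}\oplus\overline{J}$ as abstract groups, and similarly $V\cong\overline{J}'\oplus\overline{J}'$; uniqueness of the primary decomposition of finite abelian groups then forces $\overline{J}\cong\overline{J}'$. I would pick any abstract isomorphism $\alpha:\overline{J}\to\overline{J}'$ and define $\beta:\overline{K}\to\overline{K}'$ to be the unique map satisfying $(\alpha(j),\beta(k))_d=(j,k)_d$ for all $j\in\overline{J}$, $k\in\overline{K}$; such a $\beta(k)$ exists and is unique because $j'\mapsto(\alpha^{-1}(j'),k)_d$ is a character of $\overline{J}'$ that is realized by a unique element of $\overline{K}'$ under the isomorphism $\overline{K}'\cong\widehat{\overline{J}'}$. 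A routine check using bilinearity and counting shows that $\beta$ is a group isomorphism, and setting $\theta:=\alpha\oplus\beta$ on $V=\overline{J}\oplus\overline{K}$, the expansion of $(\theta(j_1k_1),\theta(j_2k_2))_d$, combined with isotropy of $\overline{J}'$ and $\overline{K}'$ and the defining compatibility of $\alpha$ and $\beta$, reduces it to $(j_1k_1,j_2k_2)_d$.

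The principal difficulty will be the complementation step in existence, namely the careful choice of $v,w$ in each primary piece to split off a symplectic summand and the verification that $V_\ell=H\oplus H^\perp$. Once that structural decomposition is in hand the uniqueness argument is essentially formal, since the abstract isomorphism class of a Lagrangian is pinned down by Pontryagin self-duality of $V$.
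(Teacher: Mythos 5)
Your uniqueness argument is essentially the paper's: both note $\overline{K}\cong\widehat{\overline{J}}\cong\overline{J}$ and $\overline{K}'\cong\overline{J}'$, deduce $\overline{J}\cong\overline{J}'$ from the structure theorem applied to $V\cong\overline{J}\oplus\overline{J}\cong\overline{J}'\oplus\overline{J}'$, and then extend an arbitrary isomorphism $\alpha:\overline{J}\to\overline{J}'$ by its Pontryagin-dual companion $\alpha^*:\overline{K}\to\overline{K}'$ to obtain the form-preserving $\theta$. For existence you take a genuinely more self-contained route: you build a symplectic basis directly, splitting each $\ell$-primary component into orthogonal rank-two summands $\langle v\rangle\times\langle w\rangle$, where $v$ has maximal order $\ell^a$ and $w$ is chosen so that $(v,w)_d$ is a primitive $\ell^a$-th root of unity; non-degeneracy supplies $w$, and alternativity together with the maximality of $\operatorname{ord}(v)$ forces $\langle v\rangle\cap\langle w\rangle=1$, $H\cap H^\perp=1$, and $V_\ell=H\oplus H^\perp$, so the recursion is sound. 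The paper instead invokes Davydov's Lemma 4.2, which establishes the Lagrangian decomposition for any finite abelian group equipped with a non-degenerate alternative pairing, and merely verifies via $(x,x)_d=1$ that the hypothesis applies here. Your construction reproves that cited lemma from scratch, which makes the argument self-contained and renders the symplectic structure explicit, at some cost in length; the paper's route is shorter but delegates the combinatorics to the reference. Two small points: the opening paragraph showing that any maximal isotropic $\overline{J}$ equals $\overline{J}^\perp$ is not actually used by the subsequent symplectic-basis construction, which produces its own $\overline{J}$ and $\overline{K}$ and could stand alone; and you slip into additive language (``subtracted from $x$'') in what is the multiplicative group ${\rm F}^*/{{\rm F}^*}^d$.
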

\begin{proof}  We work in a more general setting studied by Davydov in  \cite{dav}. Let $A$ be a finite abelian group and let $k$ be a field. Davydov defines a non-degenerate bilinear form
$$[\cdot, \cdot]:A \times A \rightarrow k^*$$ to be alternative if $[x,x]=1$ for all  $x \in A$ and proves in Lemma 4.2 of \cite{dav} that a Lagrangian decomposition $A=J \times K$ exists. By \eqref{x with x is 1}, this proof applies to our case. Let ${\rm Sp}(A)$ be the the group of automorphisms of $A$ preserving $[\cdot, \cdot]$. It remains to prove
that ${\rm Sp}(A)$ acts transitively on the set of Lagrangian decompositions. So consider a second Lagrangian decomposition $A=J' \times K'$. By the structure
theorem for finite abelian groups, $K \cong \operatorname{Hom}(J,\C^*) \cong J$ (non-canonically) and
similarly $K' \cong J'$. Thus $A \cong J \times J \cong J' \times J'$. Again by the structure theorem for
finite abelian groups, it follows that $J \cong J'$. Let $\alpha:J \rightarrow J'$ be an isomorphism and let  $\alpha^*:K \rightarrow K'$ be the dual isomorphism, i.e., $[\alpha^*(k), \alpha(j)] = [k, j]$ for all $j \in J$ and $k \in K$. By construction $\theta=\alpha \times \alpha^* \in {\rm Sp}(A)$ and $\theta$ sends $(J,K)$ to $(J',K')$.
\end{proof}
\begin{example} \label{kp example} In the case where ${\rm gcd}(d,p)=1$ set
$$\overline{J}=\Of^* {{\rm F}^*}^d / {{\rm F}^*}^d,\,  \overline{K}=<\omega> {{\rm F}^*}^d / {{\rm F}^*}^d.$$
It follows from Chap. XIII, Sec. 5 of \cite{Weilbook} that $L=(\overline{J}, \overline{K})$ is a Lagrangian decomposition of  ${{\rm F}^*} / {{\rm F}^*}^d$.
Both $\overline{J}$ and $\overline{K}$ are isomorphic to the cyclic group with $d$ elements.
\end{example}
\begin{remark} Suppose that ${\rm gcd}(d,p)=1$ and suppose in addition that  $d=m^2$ for some integer $m>1$. Then $A={{\rm F}^*}^m / {{\rm F}^*}^d$ is a Lagrangian subgroup. But since $A$ is not cyclic it follows from  Lemma \ref{lang decomp} combined with Example \ref{kp example} that $A$ does not fit into a Lagrangian decomposition.  In example  4.4 of \cite{dav} the author shows directly that the extension $$A \rightarrow {{\rm F}^*} / {{\rm F}^*}^d \rightarrow ({{\rm F}^*} / {{\rm F}^*}^d)/A$$ does not split. This shows that, contrary to the symplectic context, not every Lagrangian subgroup produces a Lagrangian decomposition.
\end{remark}

We shall now fix once and for all a Lagrangian decomposition $L=(\overline{J},\overline{K})$ of ${{\rm F}^*} / {{\rm F}^*}^d$. Let $J$ and $K$ be the pullbacks of $\overline{J}$ and $\overline{K}$ respectively  to ${\rm F}^*$. Both $J$ and $K$ are Lagrangian subgroups of ${\rm F}^*$, $JK={\rm F}^*$ and $J \cap K={{\rm F}^*}^d.$
\begin{remark} \label{nicedid}  We note that $K \cong {\rm F}^*/J$. Indeed, the natural map, inclusion into ${\rm F}^*/{{\rm F}^*}^d$ and then projection modulo $\overline{J}$ is an isomorphism from $\overline{K}$ to ${\rm F}^*/J$. Since the Hilbert symbol $(\cdot,\cdot)_d: {\rm F}^*/{{\rm F}^*}^d \times {\rm F}^*/{{\rm F}^*}^d \rightarrow \mu_d$  restricts to a perfect pairing $\overline{K}  \times \overline{J} \rightarrow \mu_d$ the isomorphism above gives a perfect pairing ${\rm F}^*/J  \times \overline{J} \rightarrow \mu_d$. Moreover, while
$(\cdot,\cdot)_d$ is not well defined on ${\rm F}^*/J \times {\rm F}^*/J $ , it is well defined and trivial on $\overline{K} \times \overline{K}.$ Thus, we always choose representatives for  ${\rm F}^*/J$ in $K$.
\end{remark}
Remark \ref{nicedid} along with the orthogonality of characters of finite groups implies that  for all $x, k \in {\rm F}^*$,  \begin{equation} \label{char func}{(\# \overline{J})}^{-1}\sum_{j \in {\overline{J}} }\eta_j\bigl(xk^{-1}\bigr)=\begin{cases} 1 &  \operatorname{if }x \in Jk \\  0  &  \operatorname{othetwise.}  \end{cases}. \end{equation}
\begin{lem} \label{res hil} Fix $x,y \in J$.
$$(x,y)_n=\begin{cases} 1 &  n \, \operatorname{is} \,  \operatorname{odd }; \\  (x,y)_{2}  &  n\equiv 2 \, (\operatorname{mod }4). \end{cases}$$
\end{lem}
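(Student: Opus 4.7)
The plan is to reduce everything to the defining property of $J$ (namely that $J$ is Lagrangian for $(\cdot,\cdot)_d$, so $(x,y)_d=1$ for $x,y\in J$) and to the power-change identity $(x,y)_n^m = (x,y)_l$ whenever $n=ml$, which is quoted as \eqref{FV fact}.

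\textbf{Case 1: $n$ odd.} Here $d=n$ by definition, so the statement $(x,y)_n=1$ is just the defining property of the Lagrangian subgroup $J$. Nothing more to do.

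\textbf{Case 2: $n\equiv 2\pmod 4$.} Now $d=n/2$, and the Lagrangian property gives $(x,y)_{n/2}=1$ for $x,y\in J$. Writing $n=2\cdot(n/2)$ and applying \eqref{FV fact} with $m=2$, $l=n/2$, we obtain
$$(x,y)_n^{2}=(x,y)_{n/2}=1,$$
so $(x,y)_n\in\mu_2=\{\pm 1\}$ (note $-1\in{\rm F}^*$ since $n$ is even). Now apply \eqref{FV fact} in the other direction: writing $n=(n/2)\cdot 2$ with $m=n/2$, $l=2$ we get
$$(x,y)_n^{n/2}=(x,y)_2.$$
Since $n\equiv 2\pmod 4$, the exponent $n/2$ is odd, and since $(x,y)_n=\pm 1$, raising to an odd power is the identity. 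Therefore $(x,y)_n=(x,y)_n^{n/2}=(x,y)_2$, as claimed.

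There is no real obstacle here: the argument is a two-line manipulation of the Hilbert symbol using \eqref{FV fact} and the Lagrangian property. The only subtlety worth emphasizing is the parity of $n/2$ in the second case, which is precisely what makes the assumption $n\equiv 2\pmod 4$ (as opposed to $n\equiv 0\pmod 4$) work, and is consistent with the paper's standing restriction that $4\nmid n$.
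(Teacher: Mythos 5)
Your proof is correct and follows essentially the same route as the paper's: the odd case reduces directly to $(x,y)_d=1$ for $x,y\in J$ (with $d=n$), and the $n\equiv 2\pmod 4$ case uses \eqref{FV fact} twice — once with $m=2$, $l=d$ to get $(x,y)_n\in\{\pm1\}$, and once with $m=d$, $l=2$ together with the oddness of $d$ to conclude $(x,y)_n=(x,y)_n^d=(x,y)_2$. The only difference is that you spell out a few steps the paper leaves implicit.
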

\begin{proof}  This lemma is trivial if $n$ is odd. If $n \equiv 2 \, (\operatorname{mod }4)$ then $d$ is odd. Also $(x,y)_n =\pm 1$ for $x,y \in J$ since by \eqref{FV fact}, $(x,y)^2_n=(x,y)_d=1$. These two observations along with another application of \eqref{FV fact} imply that for all $x,y \in J$
$$(x,y)_n=(x,y)_n^{d}=(x,y)_{_2}.$$
\end{proof}
\subsection{Eigen functionals} \label{eigenfun}
Let $\chi''$ be a character of ${{\rm F}^*}^d$. Since $[{\rm F}^*:{{\rm F}^*}^d]<\infty$ it follows as in the finite abelian group setting that $\chi''$ can be extended to ${\rm F}^*$. Let $\chi$ be one these extensions and let  $\chi'$ be the restriction of $\chi$ to $J$. This notation will be fixed throughout this paper. Note that
$$\{\chi\eta_y \mid y \in {\rm F}^*/{{\rm F}^*}^d \}$$ is the set of all extensions of $\chi''$ to ${\rm F}^*$,
$$\{\chi'\eta_y \mid y \in \overline{K} \}$$ is the set of all extensions of $\chi''$ to $J$ and
$$\{\chi\eta_y \mid y \in {\overline{J}} \}$$ is the set of all extensions of $\chi'$ to ${\rm F}^*$.

Let ${\rm S}({\rm F})$ be the space of Schwartz functions on ${\rm F}$ and let ${{\rm S}({\rm F})}^*$ be the space of linear functionals on ${\rm S}({\rm F})$. ${\rm F}^*$ acts on  ${\rm S}({\rm F})$ by right translations. Denote this action by $\rho$. ${\rm F}^*$ also acts on ${{\rm S}({\rm F})}^*$ by
$$\bigl(\lambda(g)\xi \bigr)\phi=\xi \bigl(\rho(g^{-1}) \phi \bigr).$$
Let ${{\rm S}({\rm F})}^*_{\chi}$  $\bigl( {{\rm S}({\rm F})}^*_{J, \chi'}\bigr)$  be the space of $\chi$ $(\chi')$ eigenfunctionals on ${\rm S}({\rm F})$.
\begin{lem} \label{deeptate} ${{\rm S}({\rm F})}^*_{\chi}$ is a one dimensional space.
\end{lem}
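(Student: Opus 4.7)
The plan is to leverage the short exact sequence of ${\rm F}^*$-modules
$$0 \to {\rm S}({\rm F}^*) \to {\rm S}({\rm F}) \xrightarrow{{\rm ev}_0} \C \to 0,$$
where ${\rm ev}_0$ is evaluation at zero (well defined since $\rho(g)$ fixes $0\in{\rm F}$ for every $g$) and ${\rm F}^*$ acts trivially on the quotient. Applying the functor $\mathrm{Hom}_{{\rm F}^*}(\cdot,\C_\chi)$ yields the left-exact sequence
$$0 \to \mathrm{Hom}_{{\rm F}^*}(\C,\C_\chi) \to {\rm S}({\rm F})^*_{\chi} \xrightarrow{{\rm res}} {\rm S}({\rm F}^*)^*_{\chi}.$$

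First I would verify that ${\rm S}({\rm F}^*)^*_{\chi}$ is at most one-dimensional: identifying ${\rm S}({\rm F}^*)$ with $C_c^\infty({\rm F}^*)$ under right translation realizes it as a cyclic ${\rm F}^*$-module, so any eigenfunctional on it is determined by its value on $\mathbf{1}_{\Of^*}$; an explicit nonzero generator is $\phi \mapsto \int_{{\rm F}^*}\phi(x)\chi(x)\,d^*_\psi x$.

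For $\chi\neq 1$, the leftmost term vanishes, so ${\rm res}$ is injective and $\dim {\rm S}({\rm F})^*_{\chi}\le 1$. For nonvanishing I would extend the Tate integral from ${\rm S}({\rm F}^*)$ to ${\rm S}({\rm F})$: when $\chi$ is ramified, the integral $\int_{{\rm F}^*}\phi(x)\chi(x)\,d^*_\psi x$ converges directly on ${\rm S}({\rm F})$ because $\int_{|x|\le q^{-N}}\chi(x)\,d^*_\psi x = 0$ by orthogonality of characters on $\Of^*$ for $N$ beyond the level of local constancy of $\phi$; when $\chi$ is unramified and nontrivial, I would introduce the family $\xi_s(\phi)=\int\phi(x)\chi(x)|x|^s\,d^*_\psi x$, convergent for $\Re(s)$ large, and meromorphically continue, observing that the only singularity (coming from $\phi(0)\cdot \mathrm{vol}(\Of^*)/(1-\chi(\varpi)q^{-s})$) is located at $s = \log_q\chi(\varpi)\neq 0$, so the value at $s=0$ gives the required extension.

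For $\chi=1$, the functional $\phi\mapsto\phi(0)$ clearly lies in ${\rm S}({\rm F})^*_{1}$, giving $\dim\ge 1$. To show $\dim\le 1$, I would prove that any ${\rm F}^*$-invariant $\xi$ must vanish on ${\rm S}({\rm F}^*)$ via the identity $\mathbf{1}_{\Of^*} = \mathbf{1}_{\Of} - \mathbf{1}_{\varpi\Of} = \mathbf{1}_{\Of} - \rho(\varpi^{-1})\mathbf{1}_{\Of}$: the invariance $\xi\circ\rho(\varpi^{-1})=\xi$ forces $\xi(\mathbf{1}_{\Of^*})=0$, and since $\mathbf{1}_{\Of^*}$ generates ${\rm S}({\rm F}^*)^*_{1}\cong\C$ under the Haar integral, we conclude $\xi|_{{\rm S}({\rm F}^*)}=0$, so $\xi$ factors through ${\rm S}({\rm F})/{\rm S}({\rm F}^*)=\C$ and is thus a scalar multiple of ${\rm ev}_0$. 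The principal obstacle is the existence construction for unramified nontrivial $\chi$, which requires the meromorphic continuation machinery of Tate's thesis and the verification that the pole locus misses $s=0$; the uniqueness part and the ramified cases are comparatively routine consequences of orbit analysis and orthogonality.
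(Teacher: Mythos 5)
The paper does not write out a proof of this lemma; it points to Tate's thesis, to Weil's \emph{Fonction z\^eta et distributions}, and to Theorem 3.4 of Kudla's notes. Your argument is the standard one from those sources, built on the short exact sequence of ${\rm F}^*$-modules $0 \to {\rm S}({\rm F}^*) \to {\rm S}({\rm F}) \to \C \to 0$, and the skeleton is right: inject ${\rm S}({\rm F})^*_\chi$ into ${\rm S}({\rm F}^*)^*_\chi$ when $\chi\ne 1$, produce a nonzero eigenfunctional either by direct convergence (ramified $\chi$) or by evaluating the meromorphically continued zeta family at $s=0$ (unramified nontrivial $\chi$, where the potential pole sits off $s=0$ because $\chi(\varpi)\ne 1$), and treat $\chi=1$ separately via ${\rm ev}_0$.

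There is, however, an incorrect claim in your justification of the bound $\dim {\rm S}({\rm F}^*)^*_\chi\le 1$. The module $C_c^\infty({\rm F}^*)$ is \emph{not} cyclic under right translation (the ${\rm F}^*$-span of any single function is constant on cosets of some fixed $1+\Pf^N$, hence a proper subspace), and for ramified $\chi$ it is false that an eigenfunctional is determined by its value on $\mathbf{1}_{\Of^*}$: the eigenvector property gives
\begin{equation*}
\xi(\mathbf{1}_{\Of^*}) = \Bigl(\sum_{b\,\in\,\Of^*/(1+\Pf^{N})}\chi(b)\Bigr)\xi(\mathbf{1}_{1+\Pf^{N}}) = 0
\end{equation*}
as soon as $N\ge e(\chi)\ge 1$, so this value carries no information; in particular your generator $\phi\mapsto\int_{{\rm F}^*}\phi\chi\,d_\psi^*x$ also vanishes on $\mathbf{1}_{\Of^*}$ in the ramified case. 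The bound $\dim{\rm S}({\rm F}^*)^*_\chi\le 1$ is nevertheless true --- it is the uniqueness of the $\chi$-twisted Haar distribution --- and the repair is to take $\mathbf{1}_{1+\Pf^N}$ with $N>e(\chi)$ as the test vector: every $\phi\in C_c^\infty({\rm F}^*)$ is a finite combination $\sum_i c_i\,\rho(a_i^{-1})\mathbf{1}_{1+\Pf^M}$ for some $M\ge N$, giving $\xi(\phi)=\bigl(\sum_i c_i\chi(a_i)\bigr)\xi(\mathbf{1}_{1+\Pf^M})$, and the values at different levels $M\ge N$ are tied together by $\xi(\mathbf{1}_{1+\Pf^M})=q\,\xi(\mathbf{1}_{1+\Pf^{M+1}})$ since $\chi$ is trivial on $1+\Pf^N$. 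With this fix the rest of your argument goes through unchanged; your $\chi=1$ case is unaffected because there $\mathbf{1}_{\Of^*}$ genuinely pins down the Haar functional.
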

This fundamental uniqueness result is implicit in Tate's thesis, \cite{T}, and is proven in \cite{Weil66}. See also  Theorem 3.4 of \cite{Kudla}.
\begin{lem} \label{eigen space}
$${{\rm S}({\rm F})}^*_{J, \chi'}=\bigoplus_{j \in \overline{J}} {{\rm S}({\rm F})}^*_{\chi \eta_j}.$$
In particular, $\dim \,{{\rm S}({\rm F})}^*_{J, \chi'}=d \ab d \ab^{-\half}.$
\end{lem}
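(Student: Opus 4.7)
The plan is to view ${{\rm S}({\rm F})}^*_{J,\chi'}$ as a representation of the finite abelian quotient ${\rm F}^*/J \cong \overline{K}$, decompose it into eigenspaces, and identify each eigenspace with one of the ${{\rm S}({\rm F})}^*_{\chi\eta_j}$ via Lemma \ref{deeptate}.

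First I would check that ${{\rm S}({\rm F})}^*_{J,\chi'}$ is ${\rm F}^*$-stable: since ${\rm F}^*$ is abelian, if $\xi$ is a $\chi'$-eigenfunctional for $J$, then for any $g \in {\rm F}^*$ and $j \in J$ one has $\lambda(j)\lambda(g)\xi = \lambda(g)\lambda(j)\xi = \chi'(j)\lambda(g)\xi$, so $\lambda(g)\xi \in {{\rm S}({\rm F})}^*_{J,\chi'}$. Next I would twist this action by $\chi^{-1}$: for $g \in {\rm F}^*$ set $\widetilde\lambda(g)\xi = \chi(g)^{-1}\lambda(g)\xi$. Since $\chi$ restricts to $\chi'$ on $J$, the operator $\widetilde\lambda(j)$ acts trivially on ${{\rm S}({\rm F})}^*_{J,\chi'}$ for $j \in J$, so $\widetilde\lambda$ factors through the finite abelian group ${\rm F}^*/J \cong \overline{K}$.

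Now I would apply the standard projector argument for representations of a finite abelian group over $\C$: the projectors $P_\alpha = |\overline{K}|^{-1}\sum_{k\in\overline{K}}\alpha(k)^{-1}\widetilde\lambda(k)$ (for $\alpha$ ranging over characters of $\overline{K}$) are well defined on any $\C$-vector space carrying a $\overline{K}$-action, sum to the identity, and decompose the representation into a direct sum of $\alpha$-eigenspaces. Thus
\[
{{\rm S}({\rm F})}^*_{J,\chi'} \;=\; \bigoplus_{\alpha \in \widehat{\overline{K}}}\, \{\xi \in {{\rm S}({\rm F})}^*_{J,\chi'} \mid \widetilde\lambda(k)\xi = \alpha(k)\xi \ \forall k \in \overline{K}\}.
\]
By the definition of a Lagrangian decomposition, the pairing $k \mapsto {\eta_k\!\mid_{\overline{J}}}$ is perfect, and dually $\overline{J}$ parameterizes $\widehat{\overline{K}}$ via $j \mapsto \eta_j\!\mid_{\overline{K}}$; equivalently, the characters of ${\rm F}^*$ trivial on $J$ are exactly $\{\eta_j \mid j \in \overline{J}\}$ (they are trivial on $J$ precisely because $\overline{J}$ is isotropic).

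For a fixed $j \in \overline{J}$, the $\eta_j$-eigenspace in the twisted action consists of $\xi$ with $\lambda(k)\xi = \chi(k)\eta_j(k)\xi$ for all $k \in K$. Combined with $\lambda(j')\xi = \chi(j')\xi = \chi(j')\eta_j(j')\xi$ for $j' \in J$ (again since $\eta_j$ is trivial on $J$) and the factorization ${\rm F}^* = JK$, this eigenspace is exactly ${{\rm S}({\rm F})}^*_{\chi\eta_j}$. This yields the desired direct sum decomposition. Finally, Lemma \ref{deeptate} gives $\dim\,{{\rm S}({\rm F})}^*_{\chi\eta_j} = 1$ for each $j$, and since $\#\overline{J} = d\ab d\ab^{-1/2}$ the dimension statement follows. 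The only subtle point — and it is a very mild one — is the verification that every vector is captured by the projectors even without an a priori finite-dimensionality assumption; this is handled by the identity $\sum_\alpha P_\alpha = \operatorname{Id}$, which holds pointwise on any $\C$-vector space with a $\overline{K}$-action.
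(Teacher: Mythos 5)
Your proof is correct, but it takes a genuinely different route from the paper's. The paper's argument is a two-line application of Frobenius reciprocity: it identifies ${{\rm S}({\rm F})}^*_{J,\chi'}=\operatorname{Hom}_J\bigl({\rm S}({\rm F}),\C_{\chi'}\bigr)$ with $\operatorname{Hom}_{{\rm F}^*}\bigl({\rm S}({\rm F}),{\rm Ind}_J^{{\rm F}^*}\C_{\chi'}\bigr)$, uses ${\rm Ind}_J^{{\rm F}^*}\C_{\chi'}=\bigoplus_{j\in\overline{J}}\C_{\chi\eta_j}$, and then invokes Lemma \ref{deeptate} for the dimension count. You instead unpack the same finite-abelian-group representation theory by hand: twist $\lambda$ by $\chi^{-1}$ so the action factors through $\overline{K}\cong{\rm F}^*/J$, split ${{\rm S}({\rm F})}^*_{J,\chi'}$ by the orthogonal idempotents $P_\alpha$, identify $\widehat{\overline{K}}$ with $\{\eta_j|_{\overline{K}}\mid j\in\overline{J}\}$ via the Lagrangian duality, and match each $\eta_j$-eigenspace with ${{\rm S}({\rm F})}^*_{\chi\eta_j}$ using ${\rm F}^*=JK$. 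The paper's route is more compact, hiding the bookkeeping inside adjunction and the decomposition of the induced module; your route is more elementary and, usefully, makes explicit the point the paper leaves implicit, namely that the decomposition holds without assuming in advance that ${{\rm S}({\rm F})}^*_{J,\chi'}$ is finite dimensional, because $\sum_\alpha P_\alpha=\operatorname{Id}$ on any $\C$-vector space with a $\overline{K}$-action. Both are complete; they are essentially adjoint formulations of the same underlying fact.
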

\begin{proof}
Using Frobenius reciprocity we obtain
$${{\rm S}({\rm F})}^*_{J, \chi'}=\operatorname{Hom}_J \bigl({\rm S}({\rm F}),\C_{\chi'} \bigr) \simeq \operatorname{Hom}_{{\rm F}^*} \bigl({\rm S}({\rm F}),{\rm Ind}_J^{{\rm F}^*}\C_{\chi'} \bigr).$$
Since ${\rm Ind}_J^{{\rm F}^*}\C_{\chi'}=\bigoplus_{j \in \overline{J}} \C_{\chi\eta_j}$ the first assertion follows. The second assertion now follows form Lemma \ref{deeptate}.
\end{proof}
For $s\in \C$ let $\chi_s$ be the character of ${\rm F}^*$ given by
$$x \mapsto \chi(x) \ab x \ab^s.$$
The same definition applies for characters of subgroups of ${\rm F}^*$. It was shown in Tate's Thesis, \cite{T}, that a non-zero element in  ${{\rm S}({\rm F})}^*_{\chi_s}$ is given by
$$\phi \mapsto {L(s,\chi)}^{-1}\zeta(s,\chi,\phi)$$ where $$L(s,\chi)=\begin{cases}  \frac {1} {1-q^{-s}\chi(\varpi)}\ & \chi \, \operatorname{is} \,  \operatorname{unramified} ;\\  1  &  \operatorname{otherwise} \end{cases}$$ and where $\zeta(s,\chi,\phi)$ is the rational function in $q^{-s}$ given by the meromorphic continuation of
$$\int_{{\rm F}^*} \phi(x) \chi_s(x) \, d_\psi^*x.$$
This integral is absolutely convergent for $Re(s) \gg 0$.  For $k \in \overline{K}$ we set
\begin{equation} \label{parz} \zeta_J(s,\chi,\phi,k)={(\# \overline{J})}^{-1}\sum_{j \in {\overline{J}} }\eta_j(k^{-1})\zeta(s,\chi\eta_j,\phi).\end{equation}
For $j \in  \overline{J}, k \in  \overline{K}$ we define away from the poles $\alpha_{j,\chi,s}\,  \beta_{k,\chi,s} \in {{\rm S}({\rm F})}^*$ by
\begin{equation} \label{funcz}\alpha_{j,\chi,s}(\phi)=\zeta(s,\chi\eta_j,\phi), \, \, \beta_{k,\chi,s}(\phi)=\zeta_k(s,\chi,\phi,k) \end{equation}
and we set $$\mathcal{A}_{\chi,s}=\{\alpha_{j,\chi,s} \mid \j \in \overline{J} \}, \, \, \mathcal{B}_{\chi,s}=\{\beta_{k,\chi,s} \mid \j \in \overline{J} \}.$$
From Lemma \ref{eigen space} it follows that $\mathcal{A}_{\chi,s}$ is a basis for  ${{\rm S}({\rm F})}^*_{J, \chi'_s}$.
\begin{lem} \label{partial zeta} Away from the poles, $\mathcal{B}_{\chi,s}$ is a basis for ${{\rm S}({\rm F})}^*_{J, \chi_s'}$. Also, $\zeta_J(s,\chi,\phi,k)$ is the meromorphic continuation of
$$\int_{Jk} \phi(x) \chi_s(x) d_\psi^*x.$$
This integral converges absolutely for $Re(s) \gg 0$.
\end{lem}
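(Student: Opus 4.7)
The plan is to handle the convergence/continuation statement first, because it is the tool that lets us transfer the eigenfunctional property and the basis property from $\mathcal{A}_{\chi,s}$ to $\mathcal{B}_{\chi,s}$. For $\operatorname{Re}(s) \gg 0$ each $\zeta(s,\chi\eta_j,\phi)$ is absolutely convergent, so in \eqref{parz} I may interchange the finite sum over $j\in\overline{J}$ with the integral to get
\begin{equation}\nonumber
\zeta_J(s,\chi,\phi,k)=\int_{{\rm F}^*}\phi(x)\chi_s(x)\Bigl[(\#\overline{J})^{-1}\sum_{j\in\overline{J}}\eta_j(xk^{-1})\Bigr]\, d_\psi^*x.
\end{equation}
The bracketed expression is exactly the indicator function of $Jk$ by \eqref{char func}, so the right-hand side equals $\int_{Jk}\phi(x)\chi_s(x)\, d_\psi^*x$, and absolute convergence in this region follows from the absolute convergence of each $\zeta(s,\chi\eta_j,\phi)$. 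Meromorphic continuation of $\zeta_J(s,\chi,\phi,k)$ is then immediate from the meromorphic continuation of each $\zeta(s,\chi\eta_j,\phi)$, since $\zeta_J$ is a finite $\C$-linear combination of them.

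Next I verify that $\beta_{k,\chi,s}\in{{\rm S}({\rm F})}^*_{J,\chi_s'}$. For $j_0 \in J$ and $\operatorname{Re}(s)\gg 0$, the integral representation gives
\begin{equation}\nonumber
\bigl(\lambda(j_0)\beta_{k,\chi,s}\bigr)(\phi)=\int_{Jk}\phi(xj_0^{-1})\chi_s(x)\, d_\psi^*x=\chi_s(j_0)\int_{Jk}\phi(y)\chi_s(y)\, d_\psi^*y,
\end{equation}
where the substitution $y=xj_0^{-1}$ preserves $Jk$ because $j_0\in J$ and ${\rm F}^*$ is abelian. Since $\chi_s|_J=\chi_s'$, this identity reads $\lambda(j_0)\beta_{k,\chi,s}=\chi_s'(j_0)\beta_{k,\chi,s}$ for $\operatorname{Re}(s)\gg 0$, and it persists in $s$ by meromorphic continuation.

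It remains to show $\mathcal{B}_{\chi,s}$ is a basis. By Lemma \ref{eigen space}, $\dim {{\rm S}({\rm F})}^*_{J,\chi_s'}=d|d|^{-1/2}=\#\overline{K}=\#\mathcal{B}_{\chi,s}$, so it is enough to prove linear independence. The definition \eqref{parz} is nothing but the finite Fourier transform on $\overline{J}$ applied to the basis $\mathcal{A}_{\chi,s}$: the change-of-basis matrix from $\mathcal{A}_{\chi,s}$ to $\mathcal{B}_{\chi,s}$ has entries $(\#\overline{J})^{-1}\eta_j(k^{-1})$ indexed by $(k,j)\in\overline{K}\times\overline{J}$. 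Since the Lagrangian decomposition identifies $\overline{K}$ with $\widehat{\overline{J}}$ via $k\mapsto\eta_k|_{\overline{J}}$, the orthogonality of characters of the finite abelian group $\overline{J}$ gives
\begin{equation}\nonumber
\sum_{k\in\overline{K}}\eta_j(k)\beta_{k,\chi,s}=(\#\overline{J})^{-1}\sum_{j'\in\overline{J}}\Bigl[\sum_{k\in\overline{K}}(jj'^{-1},k)_d\Bigr]\alpha_{j',\chi,s}=\alpha_{j,\chi,s},
\end{equation}
so $\mathcal{B}_{\chi,s}$ spans $\mathcal{A}_{\chi,s}$ and the transformation is invertible. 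The main (mild) obstacle is organizing the bookkeeping between $\overline{J}$, $\overline{K}$ and their dual pairing via the Hilbert symbol so that the indicator identity \eqref{char func} and its dual orthogonality are applied correctly; once that is done the rest of the argument is formal.
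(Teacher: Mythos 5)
Your proof is correct and follows essentially the same route as the paper: the inversion $\zeta(s,\chi\eta_j,\phi)=\sum_{k\in\overline{K}}\eta_j(k)\zeta_J(s,\chi,\phi,k)$ by character orthogonality together with the dimension count from Lemma \ref{eigen space} for the basis claim, and swapping the finite sum with the integral plus \eqref{char func} for the integral formula. The direct verification that each $\beta_{k,\chi,s}$ is a $\chi_s'$-eigenfunctional is correct but unnecessary, since $\beta_{k,\chi,s}$ is by definition a linear combination of the $\alpha_{j,\chi,s}$, which already span ${{\rm S}({\rm F})}^*_{J,\chi_s'}$.
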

\begin{proof} From the duality of $\overline{J}$ and $\overline{K}$ it follows that
\begin{equation}\label{invparz} \zeta(s,\chi\eta_j,\phi)=\sum_{k \in \overline{K}} \eta_j(k)\zeta_J(s,\chi,\phi,k).\end{equation}
Hence, $\mathcal{A}_{\chi,s}\subseteq  \operatorname{span} \mathcal{B}_{\chi,s}$. The first assertion is  proven since
$\# \mathcal{B}_{\chi,s} =\dim  \, {{\rm S}({\rm F})}^*_{J, \chi'}.$

We now prove the second assertion.  Suppose that $Re(s) \gg 0 $ so that all the integrals below are absolutely convergent. We have
\begin{eqnarray} \nonumber
\zeta_J(s,\chi,\phi,k)&=&{(\# \overline{J})}^{-1}\sum_{j \in \overline{J}}\eta_j(k^{-1})\zeta(s,\chi\eta_j,\phi)={(\# \overline{J})}^{-1}\sum_{j \in \overline{J}}\int_{{\rm F}^*} \phi(x) \chi_s(x)\eta_j(xk^{-1}) d_\psi^*x
\\ \nonumber &=&\int_{{\rm F}^*} \Bigl({(\# \overline{J})}^{-1}\sum_{j \in \overline{J}}\eta_j(xk^{-1}) \Bigr) \phi(x) \chi_s(x) d_\psi^*x.
\end{eqnarray}
By \eqref{char func} the proof is done.
\end{proof}
The following lemma will not be used later. It is included here for the sake of completeness.
\begin{lem} For $k \in {\rm F}^*$ let ${{\rm S}({\rm F})}^*_{J, \chi',k}$ be the subspace of ${{\rm S}({\rm F})}^*_{J, \chi'}$ which consists of functionals supported on $Jk$. Then $\zeta_J(s,\chi,\phi,k) \in {{\rm S}({\rm F})}^*_{J, \chi_s',k}$ and
$\dim \,  {{\rm S}({\rm F})}^*_{J, \chi',k} =1.$
\end{lem}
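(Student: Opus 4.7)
The plan is to treat the two assertions separately. The first refines Lemma \ref{partial zeta}, while the second is a Frobenius-reciprocity argument on the Schwartz space.

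For the first assertion, that $\zeta_J(s,\chi,\cdot,k) \in {{\rm S}({\rm F})}^*_{J,\chi_s'}$ follows from inputs already in hand: each $\alpha_{j,\chi,s}$ is a $(\chi\eta_j)_s$-eigenfunctional on ${\rm F}^*$, and the Lagrangian property of $\overline{J}$ forces $\eta_j|_J \equiv 1$ for every $j \in \overline{J}$, so the restriction to $J$ of each $\alpha_{j,\chi,s}$, and hence of their linear combination $\zeta_J(s,\chi,\cdot,k)$, is a $\chi_s'$-eigenfunctional. The support statement then follows from the integral formula in Lemma \ref{partial zeta}: in the half-plane of absolute convergence
$$\zeta_J(s,\chi,\phi,k) = \int_{Jk} \phi(x)\chi_s(x)\,d_\psi^*x,$$
which visibly depends only on $\phi|_{Jk}$; meromorphic continuation extends the vanishing on $\{\phi : \phi|_{Jk} \equiv 0\}$ to all $s$ away from poles.

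For the dimension, observe that $J \supseteq {{\rm F}^*}^d$ is an open finite-index subgroup of ${\rm F}^*$, so $Jk$ is open in ${\rm F}^*$ and every compact subset of $Jk$ is bounded away from $0$. Extension by zero therefore defines a $J$-equivariant surjection from ${\rm S}({\rm F})$ onto ${\rm S}(Jk) := C_c^\infty(Jk)$, with kernel exactly $\{\phi \in {\rm S}({\rm F}) : \phi|_{Jk} \equiv 0\}$. Dualizing and restricting to the $\chi'$-isotypic component yields the identification
$${{\rm S}({\rm F})}^*_{J,\chi',k} \;\cong\; \operatorname{Hom}_J\bigl({\rm S}(Jk),\,\C_{\chi'}\bigr).$$
Right translation by $k$ identifies ${\rm S}(Jk)$ with ${\rm S}(J)$ as smooth right $J$-modules, and Frobenius reciprocity applied to the compact induction ${\rm S}(J) = \text{c-Ind}_{\{1\}}^J \C$, or equivalently a direct computation showing that every $\chi'$-equivariant functional is a scalar multiple of $\phi \mapsto \int_J \phi(x)\chi'(x)\,d_\psi^*x$, gives $\operatorname{Hom}_J({\rm S}(J),\C_{\chi'}) \cong \C$, completing the proof.

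The main technical subtlety is the verification that extension by zero genuinely lands in ${\rm S}({\rm F})$; this rests on the fact that $0$ is not an accumulation point of any compact subset of $Jk$, which ensures that the extended function is locally constant at the origin. Once this is observed, the rest of the argument is formal.
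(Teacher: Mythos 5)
Your argument for the first assertion is fine, and it is essentially the paper's argument (the integral formula from Lemma \ref{partial zeta} immediately shows the functional is a $\chi_s'$-eigenfunctional and is supported on $Jk$).

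Your argument for the dimension count, however, has a genuine gap at the step where you identify ${{\rm S}({\rm F})}^*_{J,\chi',k}$ with $\operatorname{Hom}_J\bigl(C_c^\infty(Jk),\C_{\chi'}\bigr)$. You claim a $J$-equivariant surjection ${\rm S}({\rm F}) \twoheadrightarrow C_c^\infty(Jk)$ with kernel $V_0 := \{\phi : \phi|_{Jk}\equiv 0\}$, but the restriction map does not land in $C_c^\infty(Jk)$: since $J \supseteq {{\rm F}^*}^d$ contains elements of arbitrarily small absolute value, $0$ is an accumulation point of $Jk$, so for any $\phi\in{\rm S}({\rm F})$ with $\phi(0)\neq 0$ (e.g.\ $\phi=\mathbb{1}_{\Of}$) the restriction $\phi|_{Jk}$ has noncompact support in $Jk$. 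Thus ${\rm S}({\rm F})/V_0$ is strictly larger than $C_c^\infty(Jk)$; it fits in a short exact sequence of $J$-modules
$$0\to C_c^\infty(Jk)\to {\rm S}({\rm F})/V_0 \to \C \to 0,$$
with $J$ acting trivially on the quotient (evaluation at $0$). Your observation about $0$ not being an accumulation point of a \emph{compact} subset of $Jk$ is what makes extension by zero $C_c^\infty(Jk)\hookrightarrow{\rm S}({\rm F})$ well defined, but it is irrelevant to restriction going the other way. In effect, showing that this extra one-dimensional quotient contributes nothing to the $\chi'$-eigenspace (and that the $\chi'$-eigenfunctional on $C_c^\infty(Jk)$ actually extends to ${\rm S}({\rm F})$) is exactly the analytic content of Tate's uniqueness result (Lemma \ref{deeptate}), so the reduction is circular rather than elementary.

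The paper sidesteps this entirely by a counting argument. From Lemma \ref{eigen space} it already knows $\dim\,{{\rm S}({\rm F})}^*_{J,\chi'}=\#\overline{K}$. The integral formula gives $\dim\,{{\rm S}({\rm F})}^*_{J,\chi',k}\geq 1$ for each $k$, and since the $\#\overline{K}$ subspaces ${{\rm S}({\rm F})}^*_{J,\chi',k}$ for $k\in\overline{K}$ are in direct sum inside the $(\#\overline{K})$-dimensional space ${{\rm S}({\rm F})}^*_{J,\chi'}$, each must have dimension exactly one. Your approach would work if you replaced the false identification with the counting argument, or alternatively if you proved that the $\chi'$-eigenspace of the one-dimensional quotient $\C$ vanishes and that every $\chi'$-eigenfunctional on $C_c^\infty(Jk)$ extends; but the latter is precisely what you cannot get for free.
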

\begin{proof}The first assertion follows immediately from the integral formula given in Lemma \ref{partial zeta}. In particular, for any $k \in {\rm F}^*$, $\dim \, {{\rm S}({\rm F})}^*_{J, \chi',k}  \geq 1.$
The second assertion now follows since $$\bigoplus_{k \in \overline{K}} {{\rm S}({\rm F})}^*_{J, \chi',k} \subseteq {{\rm S}({\rm F})}^*_{J, \chi'}$$
and $\#  \overline{K}=\dim \, {{\rm S}({\rm F})}^*_{J, \chi'}.$
\end{proof}
\subsection{Tate $\gamma$-factor} \label{T and PT}
We fix a non-trivial character $\psi$ of ${\rm F}$. Given $\phi \in {\rm S}({\rm F})$ we denote by $\widehat{\phi} \in {\rm S}({\rm F})$  its $\psi$-Fourier transform, i.e.,
$$\widehat{\phi}(x)=\int_F \phi(y) \psi(xy) \, d_\psi y.$$
For $j \in  \overline{J}$ we define away from the poles $\widehat{\alpha}_{j,\chi,s} \in {{\rm S}({\rm F})}^*$ by
$$\widehat{\alpha}_{j,\chi,s}(\phi)=\alpha_{j^{-1},\chi^{-1},1-s}(\widehat{\phi})$$
By Tate's thesis, \cite{T}, for all $s \in \C$
$${L(1-s,\chi^{-1})}^{-1}\widehat{\alpha}_{j,\chi,s} \in {{\rm S}({\rm F})}^*_{\chi_s}.$$ This observation along with the uniqueness result in Lemma \ref{deeptate} give rise to Tate $\gamma$-factor
$$\gamma(s,\chi,\psi)=\epsilon(s,\chi,\psi)\frac{L(1-s,\chi^{-1})}{L(s,\chi)}$$
This rational function in $q^{-s}$ is defined via the functional equation.
\begin{equation} \label{tate def} \zeta(1-s,\chi^{-1},\widehat{\phi})=\gamma(s,\chi,\psi)\zeta(s,\chi,\phi). \end{equation}
By plugging to \eqref{tate def} a test function $\phi$ supported in $1+\Pf^r$ where $r \gg 0$ one shows that ${\gamma}(1-s,\chi^{-1}, \psi)$ is given by the  meromorphic continuation of
\begin{equation} \label{tateintegral} \lim_{r \rightarrow \infty}\int_{\Pf^{-r}}\chi_{s}(x)\psi(x) \, d_\psi^* x. \end{equation}
This limit exists for $Re(s) \gg 0$.
It is well known that $\epsilon(s,\chi,\psi)$ is a monomial function in $q^{-s}$ and that $\epsilon(s,\chi,\psi)$=1 if $\chi$ is unramified and $\psi$ is normalized.
 Also,
\begin{eqnarray} \label{Tate gamma} {\epsilon}(1-s,\chi^{-1},\psi) &=& \chi(-1)\epsilon(s,\chi,\psi)^{-1}, \\
\label{changepsi}\gamma(s,\chi,\psi_a) &=& \chi(a)\ab a \ab^{s-\half}\gamma(s,\chi,\psi), \\
\label{epsilon old twist} \epsilon(s+t,\chi,\psi) &=& q^{e(\psi)-e(\chi)t}\epsilon(s,\chi,\psi),\\
\label{epsilon twist} \epsilon(s,\chi\eta,\psi) &=& \eta(\varpi)^{e(\chi)-e(\psi)}\epsilon(s,\chi,\psi),\end{eqnarray}
where $\eta$ is an unramified character. See Section 1 in \cite{Schmidt} for example. Combining \eqref{Tate gamma} and \eqref{epsilon old twist} one obtains
\begin{equation} \label{epsilon twist and inv}
\epsilon(1-s,\chi^{-1},\psi)\epsilon(1+s,\chi,\psi)= \chi(-1)q^{e(\psi)-e(\chi)}.
\end{equation}
\subsection{Partial $\gamma$-factors} \label{T and PT}
We define a linear operator ${\rm M}_\psi$ on ${{\rm S}({\rm F})}^*_{J, \chi'}$ by setting
$${\rm M}_\psi \bigl({\alpha}_{j,\chi,s}\bigr)=\widehat{\alpha}_{j,\chi,s}$$
and then by linear extension. For $k \in \overline{K}$ denote
\begin{equation} \label{motivate} {\rm M}_\psi \bigl({\beta}_{k,\chi,s}\bigr)=\widehat{\beta}_{k,\chi,s} \end{equation}
Recalling \eqref{parz} and \eqref{funcz} one observes that
$$\widehat{\beta}_{k,\chi,s}(\phi)=\beta_{k^{-1},\chi^{-1},1-s}(\widehat{\phi}).$$
By  \eqref{tate def} the matrix representing ${\rm M}_\psi$ with respect to $\mathcal{A}_{\chi,s}$ is diagonal. We now describe the matrix representing ${\rm M}_\psi$ with respect to $\mathcal{B}_{\chi,s}$.
For $k \in \overline{K}$ define the partial $\gamma$-factor
\begin{equation}\label{part gamma def} \gamma_{_J}(s,\chi,\psi,k)={(\# \overline{J})}^{-1} \sum_{j \in \overline{J}} \gamma(s,\chi\eta_j,\psi)\eta_j(k^{-1}).\end{equation}
It is a rational function in $q^{-s}$.
\begin{thm}\label{partial gamma lemma}
$$\zeta_{_J}(1-s,\chi^{-1},\widehat{\phi},k_0^{-1})=\sum_{k \in \overline{K}}{ \gamma}_{_J}(s,\chi,\psi,k^{-1}k_0) \zeta_{_J}(s,\chi,\phi,k).$$

Also, $ \gamma_{_J}(1-s,\chi^{-1},\psi,k)$ is the meromorphic continuation of
$$\lim_{r \rightarrow \infty} \int_{\Pf^{-r} \cap Jk^{-1}}  \chi_{s}(x) \psi(x)\, d_\psi^*x.$$
\end{thm}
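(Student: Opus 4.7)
My plan is to reduce both assertions to the classical Tate functional equation \eqref{tate def}, the Fourier-type inversion \eqref{invparz}, the limit representation \eqref{tateintegral}, and the orthogonality identity \eqref{char func}; the argument is parallel in spirit to the proof of Lemma \ref{partial zeta}.

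For the functional equation I would start from the defining formula \eqref{parz} evaluated at $(1-s,\chi^{-1},\widehat\phi,k_0^{-1})$:
$$\zeta_{_J}(1-s,\chi^{-1},\widehat\phi,k_0^{-1})=(\#\overline J)^{-1}\sum_{j\in\overline J}\eta_j(k_0)\,\zeta(1-s,\chi^{-1}\eta_j,\widehat\phi).$$
Writing $\chi^{-1}\eta_j=(\chi\eta_j^{-1})^{-1}$, I apply \eqref{tate def} with $\chi$ replaced by $\chi\eta_j^{-1}$ and then re-index via the bijection $j\mapsto j^{-1}$ on $\overline J$ (using $\eta_{j^{-1}}=\eta_j^{-1}$) to obtain
$$(\#\overline J)^{-1}\sum_{j\in\overline J}\eta_j(k_0^{-1})\,\gamma(s,\chi\eta_j,\psi)\,\zeta(s,\chi\eta_j,\phi).$$
Next I expand each $\zeta(s,\chi\eta_j,\phi)$ by \eqref{invparz} and interchange the two finite sums, arriving at
$$\sum_{k\in\overline K}\zeta_{_J}(s,\chi,\phi,k)\cdot(\#\overline J)^{-1}\sum_{j\in\overline J}\gamma(s,\chi\eta_j,\psi)\,\eta_j(kk_0^{-1}).$$
The coefficient of $\zeta_{_J}(s,\chi,\phi,k)$ is, directly from \eqref{part gamma def}, equal to $\gamma_{_J}(s,\chi,\psi,k^{-1}k_0)$, which completes the functional equation.

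For the integral representation I substitute \eqref{tateintegral} (applied with $\chi$ replaced by $\chi\eta_j^{-1}$) into \eqref{part gamma def} to write, for $\operatorname{Re}(s)\gg 0$,
$$\gamma_{_J}(1-s,\chi^{-1},\psi,k)=(\#\overline J)^{-1}\sum_{j\in\overline J}\eta_j(k^{-1})\lim_{r\to\infty}\int_{\Pf^{-r}}\chi_s(x)\,\eta_j(x)^{-1}\,\psi(x)\,d^*_\psi x.$$
Since the sum over $\overline J$ is finite and each integral converges absolutely in this half-plane, the sum commutes with both the limit and the integration, leaving an integral against the inner average $(\#\overline J)^{-1}\sum_{j}\eta_j(x^{-1}k^{-1})$. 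By \eqref{char func} (applied with $x$ replaced by $x^{-1}$, whence $x^{-1}\in Jk$ iff $x\in Jk^{-1}$) this average equals the characteristic function of $Jk^{-1}$, and the expression collapses to $\lim_{r\to\infty}\int_{\Pf^{-r}\cap Jk^{-1}}\chi_s(x)\,\psi(x)\,d^*_\psi x$. Meromorphic continuation in $s$ then finishes the proof.

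The main obstacle is bookkeeping: the substitution $j\mapsto j^{-1}$ and the identification $\eta_j^{-1}=\eta_{j^{-1}}$ must be tracked consistently, the argument $(k^{-1}k_0)^{-1}$ in \eqref{part gamma def} must be matched with the $kk_0^{-1}$ that emerges after swapping the double sum, and in the second part one must verify that orthogonality yields the coset $Jk^{-1}$ rather than $Jk$. Once these indices are handled correctly, both parts reduce to direct manipulations.
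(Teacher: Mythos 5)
Your proposal is correct and follows essentially the same route as the paper: you apply the Tate functional equation \eqref{tate def} term by term, expand via \eqref{invparz}, swap the finite sums, and identify the coefficient with \eqref{part gamma def}; for the second part you substitute \eqref{tateintegral}, swap the finite sum with the limit and integral, and apply \eqref{char func}. The only cosmetic difference is that the paper packages the first computation through the operator ${\rm M}_\psi$ acting on the basis elements $\alpha_{j,\chi,s}$ and $\beta_{k,\chi,s}$, which encodes the $j\mapsto j^{-1}$ re-indexing in the definition of $\widehat\alpha_{j,\chi,s}$, whereas you perform that re-indexing explicitly.
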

\begin{proof}
\begin{eqnarray} \nonumber
\bigl({\rm M}_\psi ( {\beta}_{k_0,\chi,s}) \bigr)(\phi) &=&{(\# \overline{J})}^{-1} \sum_{j \in \overline{J}} \eta_j(k_0^{-1})\bigl({\rm M}_\psi  ({\alpha}_{k_0,\chi,s}) \bigr)(\phi)  \\ \nonumber
&=&{(\# \overline{J})}^{-1} \sum_{j \in \overline{J}} \eta_j(k_0^{-1}) \gamma(s,\chi\eta_j,\psi)\zeta(s,\chi\eta_j,\phi).
\end{eqnarray}
By \eqref{invparz}  we obtain
\begin{eqnarray} \nonumber
\bigl({\rm M}_\psi ( {\beta}_{k_0,\chi,s}) \bigr)(\phi)&=&{(\# \overline{J})}^{-1} \sum_{j \in \overline{J}} \eta_j(k_0^{-1}) \gamma(s,\chi\eta_j,\psi)\sum_{k \in  \overline{K}} \eta_j(k)\zeta_{_J}(s,\chi,\phi,k)\\ \nonumber
&=&\sum_{k \in \overline{K}} \Bigl( {(\# \overline{J})}^{-1} \sum_{j \in \overline{J}} \gamma(s,\chi\eta_j,\psi)\eta_j(kk_0^{-1}) \Bigr)\zeta_{_J}(s,\chi,\phi,k).
\end{eqnarray}
This proves the first assertion.

We now prove the second assertion. We may assume that $Re(s) \gg 0$ so that all the limits below exist.
\begin{eqnarray} \nonumber  \gamma_{_J}(1-s,\chi^{-1},\psi,k)&=&{(\# \overline{J})}^{-1}\sum_{j \in \overline{J}} \eta_j(k^{-1})\gamma(1-s,\chi^{-1}\eta_j,\psi)\\ \nonumber &=&{(\# \overline{J})}^{-1}\sum_{j \in \overline{J}} \eta_j(k^{-1}) \lim_{r \rightarrow \infty} \int_{\Pf^{-r}} \psi(x)(\chi \eta^{-1}_j)_{s}(x) d_\psi^*x\\ \nonumber
 &=& \lim_{r \rightarrow \infty} \int_{\Pf^{-r}} \Bigl({(\# \overline{J})}^{-1} \sum_{j \in \overline{J}} \eta^{-1}_j(kx) \Bigr) \psi(x) \chi_s(x) d_\psi^*x.
 \end{eqnarray}
Using  \eqref{char func} again we are done.
\end{proof}
\begin{remark} \nonumber We have been using the notation $\zeta_J(s,\chi,\phi,k)$ and $\gamma_{_J}(s,\chi,\psi,k)$ rather then $\zeta_L(s,\chi,\phi,k)$ and $\gamma_{_L}(s,\chi,\psi,k)$  since the results in Sections \ref{eigenfun}, \ref{T and PT} and in Section \ref{MT and PMT} below do not depend on the existence of a Lagrangian decomposition. These results only uses the fact that $J$ is a maximal abelian subgroup. $\overline{K}$ can be replaced by $F/J$.
\end{remark}
\subsection{Weil index} \label{weilindexdef}
Let $\gamma_F(\psi)$ be the unnormlized Weil index  defined and studied in \cite{Weil}. It is given by
$$\gamma_F(\psi)=\lim_{r \rightarrow \infty}\int_{\Pf^{-r}}\psi(x^2)\, d_{\psi_{_2}} x.$$
In fact $$\int_{\Pf^{-r}}\psi(x^2)\, d_{\psi_{_2}} x$$ stabilizes for  $r \gg 0$ and
$\mid \! \gamma_F(\psi) \! \mid=1$, see Propositions 3.1 and 3.3 in \cite{CC}.
In particular, since $\overline{\psi}=\psi_{-1}$ we have
\begin{equation} \label{weilchangepsii} \gamma_F(\psi)^{-1}=\overline{\gamma_F(\psi)}=\gamma_F(\psi_{-1}). \end{equation}
For $a \in {\rm F}^*$ define now the normalized Weil index
\begin{equation} \label{norweildef} \gamma_\psi(a)=\frac{\gamma_F(\psi_a)}{\gamma_F(\psi)}.\end{equation}
By changing the integration variable one immediately observes that $\gamma_F(\psi_{a^2})=\gamma_F(\psi)$ for any $a \in {{\rm F}^*}^2$. Equivalently $\gamma_\psi\bigl({{\rm F}^*}^2\bigr)=1$. Note that from the definition of $\gamma_\psi(-1)$ combined with  \eqref{weilchangepsii} it follows that
\begin{equation} \label{unnor weil} \gamma_F(\psi_{-1})^{2}=\gamma_\psi(-1). \end{equation}
It is well known that for all $x,y \in {\rm F}^*$
\begin{equation} \label{weil prop} \gamma_\psi(xy)=\gamma_\psi(x)\gamma_\psi(y)(x,y)_2, \end{equation}
see for example Page 367 in \cite{Rao}. Note that \eqref{weil prop} implies that $\gamma_\psi(a)$ lies in $\mu_4$. Equation \eqref{unnor weil} now implies that $\gamma_F(\psi)$ lies in $\mu_8$.  Equation \eqref{unnor weil} also implies that
\begin{equation} \label{weilchangepsi} \gamma_{\psi_a}(x)=(a,x)_2\gamma_\psi(x). \end{equation}
In particular, since $\gamma_\psi^{-1}=\gamma_{\psi_{-1}}$ it follows that
\begin{equation} \label{weilchangepsiinv} \gamma_{\psi}(x)^{-1}=(-1,x)_2\gamma_\psi(x). \end{equation}
We note that  \eqref{norweildef} and \eqref{x with x} imply that
\begin{equation} \label{weilsqyare} \gamma_\psi(a)^2=(a,-1)_2 \end{equation}
Last, note that \eqref{unnor weil} and \eqref{weilsqyare} imply  that
\begin{equation} \label{lasttwist} \gamma_F(\psi_{-1})^{-2}\gamma_\psi(-1)^{-1}=(-1,-1)_2. \end{equation}
\subsection{Metaplectic $\widetilde{\gamma}$-factor} \label{MT and PMT}
For $\phi \in {\rm S}({\rm F})$ we  define $\widetilde{\phi}:{\rm F}^* \rightarrow \C$ by
$$\widetilde{\phi}(x)=\int_{{\rm F}^*} \phi(y)\gamma_\psi(xy)^{-1} \psi(xy) d_\psi y.$$
Although $\widetilde{\phi}(x)$ is typically not an element of ${\rm S}({\rm F})$ it was proven in \cite{Sz3} that
$$\int_{{\rm F}^*} \widetilde{\phi}(x) \chi_s(x) d^*_\psi x$$
converges absolutely for $a<Re(s)<a+1$, for some  $a \in \R$, to a rational function in $q^{-s}$. This enables the natural definition of $\zeta(s,\chi,\widetilde{\phi})$ as the meromorphic continuation of the integral above.  Furthermore, by \cite{Sz3} there exists a metaplectic $\widetilde{\gamma}$-factor,  $\widetilde{\gamma}(s,\chi,\psi)$, such that
$$\zeta(1-s,\chi^{-1},\widetilde{\phi})=\zeta(s,\chi,\phi)\widetilde{\gamma}(s,\chi,\psi)$$
for all $\phi \in {\rm S}({\rm F})$.  In particular, away from the poles,  $$\phi \mapsto \zeta(1-s,\chi^{-1},\widetilde{\phi})\in {{\rm S}({\rm F})}^*_{\chi_s}.$$  It was also proven in \cite{Sz3} that $\widetilde{\gamma}(1-s,\chi^{-1},\psi)$ is the meromorphic continuation of
$$\lim_{r \rightarrow \infty}\int_{\Pf^{-r}}\chi_s(x)  \gamma_\psi(x)^{-1} \psi(x) \, d_\psi^* x.$$
This limit exists for $Re(s) \gg 0$. The computation of the last integral is contained in an unpublished note of W. Jay Sweet, \cite{Sweet}, see also the appendix of \cite{GoSz}:
\begin{equation} \label{meta gama formula} \widetilde{\gamma}(1-s,\chi^{-1},\psi)=\gamma_F(\psi_{-1})^{-1} \chi(-1) \frac{ \gamma(s+\half,\chi,\psi)}{ \gamma(2s,\chi^{2},{\psi_{_2}})}. \end{equation}
From  \eqref{meta gama formula}, \eqref{changepsi} and \eqref{norweildef} we have
\begin{equation} \label{metachangepsi}\widetilde{\gamma}(s,\chi,\psi_a)=\gamma_\psi(a)\ab a \ab ^{s-\half} \chi(a)\widetilde{\gamma}(s,\chi,\psi).\end{equation}
\subsection{Partial $\widetilde{\gamma}$-factors} \label{MT and PMT}
Similar to Section \ref{T and PT},  for $k \in \overline{K}$ we set
\begin{equation}\label{part metagamma def}\zeta_J(s,\chi,\widetilde{\phi},k)={(\# \overline{J})}^{-1}\sum_{j \in  {\overline{J}} }\eta_j(k^{-1})\zeta(s,\chi\eta_j,\widetilde{\phi}).\end{equation}
$\zeta_J(s,\chi,\widetilde{\phi},k)$ is the meromorphic continuation of
$$\int_{Jk^{-1}} \widetilde{\phi}(x) \chi_s(x) d_\psi^*x.$$
 For $k \in \overline{K}$ we define the partial metaplectic $\widetilde{\gamma}$-factor
\begin{equation} \label{part meta  gamma def}\widetilde{\gamma}_{_J}(s,\chi,\psi,k)={(\# \overline{J})}^{-1} \sum_{j \in \overline{J}} \widetilde{\gamma}(s,\chi\eta_j,\psi)\eta_j(k^{-1}).\end{equation}
We have the following analog to Theorem \eqref{partial gamma lemma}
\begin{thm} \label{partial metagamma lemma}
$$\zeta_{_J}(1-s,\chi^{-1},\widetilde{\phi},k_0^{-1})=\sum_{k \in \overline{K}}\widetilde{ \gamma}_{_J}(s,\chi,\psi,k^{-1}k_0) \zeta_{_J}(s,\chi,\phi,k).$$
Also, ${\widetilde{\gamma}}_{_J}(1-s,\chi^{-1},\psi,k)$ is the meromorphic continuation of

$$ \lim_{r \rightarrow \infty}\int_{\Pf^{-r} \cap Jk^{-1}}\chi_s(x) \gamma_\psi(x)^{-1} \psi(x) \, d_\psi^* x.$$
This last limit exists for $Re(s) \gg 0$.
\end{thm}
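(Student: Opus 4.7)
The plan is to mirror the proof of Theorem \ref{partial gamma lemma} almost verbatim, replacing the $\psi$-Fourier transform by the map $\phi\mapsto\widetilde{\phi}$ and Tate's $\gamma$-factor by the metaplectic $\widetilde{\gamma}$-factor, and using the metaplectic functional equation $\zeta(1-s,\chi^{-1},\widetilde{\phi}) = \zeta(s,\chi,\phi)\widetilde{\gamma}(s,\chi,\psi)$ from \cite{Sz3} in place of \eqref{tate def}.

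For the first assertion, I would unfold the left-hand side using \eqref{part metagamma def} as $(\#\overline{J})^{-1}\sum_{j}\eta_j(k_0)\zeta(1-s,\chi^{-1}\eta_j,\widetilde{\phi})$, then apply the metaplectic functional equation to each term with $\chi$ replaced by $\chi\eta_j^{-1}$. Reindexing $j\mapsto j^{-1}$ in the resulting sum over the abelian group $\overline{J}$, expanding $\zeta(s,\chi\eta_j,\phi)$ by \eqref{invparz}, and swapping the two finite sums, the inner sum is recognized as $\widetilde{\gamma}_J(s,\chi,\psi,k^{-1}k_0)$ via \eqref{part meta  gamma def}. For the second assertion, I would, working for $Re(s)\gg 0$, substitute into \eqref{part meta  gamma def} applied to $(1-s,\chi^{-1})$ the integral representation of $\widetilde{\gamma}(1-s,\chi^{-1}\eta_j,\psi)$ from \cite{Sz3}, obtained by applying the formula recalled earlier with $\chi$ replaced by $\chi\eta_j^{-1}$. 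Interchanging the finite sum over $j\in\overline{J}$ with the limit and integral, the factor $(\#\overline{J})^{-1}\sum_j \eta_j(k^{-1})\eta_j(x)^{-1}=(\#\overline{J})^{-1}\sum_j \eta_j(k^{-1}x^{-1})$ becomes the indicator function of $Jk^{-1}$ by \eqref{char func}, yielding precisely the claimed integral.

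The main technical point is justifying that identities established first for $Re(s)\gg 0$ propagate to the full complex plane. This is routine: by \cite{Sz3} each $\widetilde{\gamma}(s,\chi\eta_j,\psi)$ and each $\zeta(s,\chi\eta_j,\widetilde{\phi})$ is a rational function in $q^{-s}$, so the finite linear combinations defining the partial objects inherit this property, and two rational functions agreeing on a right half plane agree wherever both are defined. The interchange of the finite sum over $\overline{J}$ with the limit and integral in the second assertion is unproblematic since the inner integrals are absolutely convergent in the relevant region by \cite{Sz3}, so the substance of the argument reduces to the algebraic manipulations sketched above.
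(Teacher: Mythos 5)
Your proposal is correct and is essentially the argument the paper implicitly invokes when it introduces Theorem~\ref{partial metagamma lemma} as an ``analog to Theorem~\ref{partial gamma lemma}'' without writing out a proof: you use the metaplectic functional equation of \cite{Sz3} in place of \eqref{tate def}, then \eqref{invparz} and a swap of the two finite sums for the first assertion, and the integral representation of $\widetilde{\gamma}(1-s,\chi^{-1},\psi)$ together with \eqref{char func} for the second. The only cosmetic departure from the paper's proof of the Tate case is that you unfold the left side directly and reindex $j\mapsto j^{-1}$ rather than routing through the operator $\mathrm{M}_\psi$; the substance is the same.
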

\section{Genuine principal series representations of $\widetilde{\rm {SL}_2({\rm F})}$.} \label{staninfo}
\subsection{An $n$ fold cover of ${\rm SL}_2({\rm F})$.}
Let $G={{\rm SL}_2({\rm F})}$ be the group of two by two matrices with entries in ${\rm F}$ whose determinant is 1. Let ${\rm N} \cong {\rm F}$ be the group of upper triangular unipotent matrices. Let ${\rm H}\cong {\rm F}^*$ be the group of diagonal elements inside  $G$. Denote ${\rm B}={\rm H} \ltimes {\rm N}$. For $x \in { \rm F}$, and  $a\in {\rm F}^*$ we shall write
$$n(x)=\left( \begin{array}{cc} {1} & {x} \\ {0} & {1} \end{array} \right), \quad  h(a)=\left( \begin{array}{cc} {a} & {0} \\ {0} & {a^{-1}} \end{array} \right), \quad  w_{_0}=\left( \begin{array}{cc} {0} & {1} \\ {-1} & {0} \end{array} \right).$$
Let $\mslt^{(n)}=\widetilde{{\rm SL}_2({\rm F})}^{(n)}$ be the topological central extension of $\sltt$ by $\mu_n$ constructed by Kubota in \cite{Kub}. We have the short exact sequence
$$1\rightarrow \mu_n \rightarrow \mslt  \rightarrow  \sltt \rightarrow  1.$$
We shall realize  $\mslt^{(n)}$ as the set $G \times \mu_n$ along with the multiplication
$$\bigl(g,\epsilon \bigr)\bigl(g',\epsilon' \bigr)=\bigl(gg',c(g,g')\epsilon \epsilon'\bigr),$$
where \begin{equation}\label{rao}c(g,g')=\bigl(x(gg')x^{-1}(g),x(gg')x^{-1}(g')\bigr)_n.\end{equation} Here
$$x \left( \begin{array}{cc} {a} & {b} \\ {c} & {d} \end{array} \right)=\begin{cases} c & c \neq 0; \\ d & c=0.
\end{cases}$$
We shall denote by $sec$ the section map from  $\sltt$ to $\mslt^{(n)}$ given by $$sec(g)=(g,1)$$ (generally, it is not a group homomorphism). We set $w=sec(w_{_0})$. For a subset $A$ of $SL_{2}({\rm F})$ we shall denote by $\widetilde{A}^{(n)}$ its inverse image in  $\mslt^{(n)}$. For most of this paper, $n$ is fixed. Thus, when convenient we shall drop the index $n$  when discussing  $\mslt^{(n)}$ and its subgroups (only toward  the end of Section \ref{avformula} $n$ varies).

From \eqref{rao} it follows that for $l$ sufficiently large, $\mslt$ splits over
$${\rm K}_l=\{g\in  {\rm SL}_2(\Of) \mid g=I_{2} \, ( \operatorname{mod} \, \Pf^l) \}$$
 via the section $sec$. The topology on $\mslt$ is defined so that $\{sec({\rm K}_l)\}_{l \gg 0}$ form a basis of the neighborhoods of the identity.

 \begin{lem} \label{center and max} Set
$$C=\{h(a) \mid a \in {{\rm F}^*}^d \}$$
and
$$M=\{h(a) \mid a\in J \}.$$
Then $\widetilde{C}$ is the center of  $\widetilde{\rm{H}}$ and $\widetilde{\rm M}$ is a maximal abelian subgroup of $\widetilde{\rm{H}}$.
\end{lem}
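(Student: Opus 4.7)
The plan is to reduce both assertions to a single commutator computation on the torus $\widetilde{{\rm H}}$, after which the characterization of the center and of a maximal abelian subgroup follows from the bilinearity, antisymmetry, and non-degeneracy of the Hilbert symbol together with the defining property of the Lagrangian decomposition $L=(\overline{J},\overline{K})$ established in Section~\ref{npower}.

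First I would compute the Kubota cocycle on ${\rm H}\times {\rm H}$. Since $h(a)$ has lower-right entry $a^{-1}$ and vanishing lower-left entry, the function $x$ of \eqref{rao} gives $x(h(a))=a^{-1}$, and plugging in yields $c(h(a),h(b))=(b^{-1},a^{-1})_n=(a,b)_n^{-1}$ after applying bilinearity and antisymmetry. A direct comparison of $\bigl(h(a),1\bigr)\bigl(h(b),1\bigr)$ with $\bigl(h(b),1\bigr)\bigl(h(a),1\bigr)$ then shows that the commutator of any two lifts of $h(a)$ and $h(b)$ to $\widetilde{{\rm H}}$ equals $(a,b)_n^{-2}\in\mu_n$; any two such lifts differ by central elements, so this computation is independent of choices.

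Centrality now becomes a pure Hilbert-symbol question: a lift of $h(a)$ is central in $\widetilde{{\rm H}}$ iff $(a,b)_n^{2}=1$ for every $b\in\F^*$. For odd $n$, squaring is a bijection on $\mu_n$, so this reduces to $(a,b)_n=1$ for all $b$, hence $a\in{\F^*}^n={\F^*}^d$ by non-degeneracy. For even $n$, \eqref{FV fact} gives $(a,b)_n^{2}=(a,b)_{n/2}=(a,b)_d$, so again the condition is $a\in{\F^*}^d$ by non-degeneracy of $(\cdot,\cdot)_d$. Thus the center of $\widetilde{{\rm H}}$ equals $\widetilde{C}$. The same reduction shows that two lifts $\widetilde{h(a)}$ and $\widetilde{h(b)}$ commute exactly when $\eta_a(b)=(a,b)_d=1$; since $\overline{J}$ is Lagrangian this holds for all $a,b\in J$, proving that $\widetilde{M}$ is abelian.

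For maximality, suppose some lift of $h(c)$ centralizes $\widetilde{M}$. By the previous reduction $\eta_c|_{\overline{J}}=1$. Using $\F^*=JK$ I would decompose $c=jk$ with $j\in J$ and $k\in K$; since $\eta_j|_{\overline{J}}=1$, the hypothesis forces $\eta_k|_{\overline{J}}=1$, and the defining property of the Lagrangian decomposition, namely that restriction to $\overline{J}$ is an isomorphism $\overline{K}\to\widehat{\overline{J}}$, then forces $k$ to be trivial in $\overline{K}$, i.e.\ $k\in{\F^*}^d\subseteq J$, so $c\in J$. The only real obstacle is bookkeeping in the even-$n$ case, where the factor of $2$ between $n$ and $d$ must be tracked through \eqref{FV fact}, and where one must confirm that the inverse signs appearing in the cocycle computation do not affect the vanishing conditions.
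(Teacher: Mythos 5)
Your proposal is correct and follows essentially the same route as the paper: compute the Kubota cocycle restricted to $\rm H$, observe that two lifts commute precisely when $(a,b)_n^2=1$, and translate this via \eqref{FV fact} into the condition $(a,b)_d=1$, from which both the centrality characterization and the maximality of $\widetilde{\rm M}$ follow from non-degeneracy and the Lagrangian property. Your slightly longer maximality argument via the decomposition $c=jk$ is fine, though it can be shortened by invoking the definition $\overline{J}=\bigcap_{x\in\overline{J}}\ker(\eta_x)$ directly.
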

\begin{proof}
From \eqref{rao} it follows that $c \bigl(h(a),h(b) \bigr)=(b,a)_n.$ Hence, inverse images of $h(a)$ and $h(b)$ in $\mslt$ commute if and only if $(b,a)_n^2=1$.
By \eqref{FV fact}, this is equivalent to $(a,b)_d=1$.  Both assertions now follow.
 \end{proof}
\subsection{Representations} \label{car rep}
From the cocycle formula \eqref{rao} it follows that $\mslt$ splits over ${\rm N}$ canonically via the section $sec$ and  that $\widetilde{\rm{H}}$ normalizes $sec\bigl( {\rm N} \bigr)$. We shall view ${\rm N}$ as a subgroup of $\mslt$ by identifying it with $sec({\rm N})$. Any representation of $\widetilde{\rm{H}}$
can be extended to a representation of $\widetilde{{\rm B}}$ by defining it to be trivial on  ${\rm N}$. Thus, as in the linear case, we shall not distinguish between representations of $\widetilde{\rm{H}}$  and those of $\widetilde{{\rm B}}$. A (complex) representation of $\mslt$ or any of its subgroups is called genuine if the central subgroup $\mu_n$ acts by the previously fixed injective character $\mu_n \hookrightarrow \C^*.$

Let $\beta$ be a character of  $\widetilde{\rm M}$. We define ${\rm Ind}_ {\widetilde{\rm M}}^{\widetilde{\rm{H}}} \beta$ to be the space of complex functions on $\widetilde{\rm{H}}$ such that
$$f(ah)=\beta(a)f(h)$$ for all $a \in\widetilde{\rm M}, \, h \in \widetilde{\rm{H}}.$ We shall denote by $i(\beta)$ the  representation of $\widetilde{\rm{H}}$ acting on ${\rm Ind}_ {\widetilde{\rm M}}^{\widetilde{\rm{H}}} \beta$ by right translations.  By  the Stone-Von Neumann Theorem, see  Theorem 3.1 in \cite{Weissman09} for example, we have
\begin{lem} \label{svn} The isomorphism class of a genuine smooth irreducible  representation $\sigma$ of $\widetilde{\rm{H}}$  is determined by its central character $\chi_\sigma$. Moreover, a realization of $\sigma$ is given by $i(\chi'_\sigma)$ where $\chi'_\sigma$ is a character of  $\widetilde{\rm M}$ which extends $\chi_\sigma$. In particular, the dimension of $\sigma$ is $[\widetilde{\rm{H}}:\widetilde{\rm M}]=d \ab d \ab^{-\half}.$
\end{lem}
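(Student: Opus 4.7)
The plan is to reduce the statement to a finite-dimensional Mackey-theoretic computation, exploiting that $\widetilde{\rm H}/\widetilde{C}$ is a finite abelian group of order $[{\rm F}^*:{{\rm F}^*}^d]=d^2\ab d\ab^{-1}$ by \eqref{index} and that $\widetilde{\rm M}$ is an open subgroup of finite index in $\widetilde{\rm H}$ (open because $J\supseteq {{\rm F}^*}^d$ is open in ${\rm F}^*$). Note also that $\widetilde{\rm M}$ is normal in $\widetilde{\rm H}$, since both contain $\widetilde{C}$ and $\widetilde{\rm H}/\widetilde{C}$ is abelian.

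First, given a genuine smooth irreducible $\sigma$ with central character $\chi_\sigma$, pick any nonzero $v\in\sigma$ and consider $W=\widetilde{\rm M}\cdot v$. Since $\widetilde{C}$ acts by $\chi_\sigma$ and $\widetilde{\rm M}/\widetilde{C}$ is finite of order $d\ab d\ab^{-\half}$, the space $W$ is finite-dimensional. Being a module over the abelian group $\widetilde{\rm M}$, it decomposes into characters, and each such character restricts to $\chi_\sigma$ on $\widetilde{C}$, hence is an extension of $\chi_\sigma$. Picking any one of these and calling it $\chi'_\sigma$, Frobenius reciprocity produces a nonzero map $i(\chi'_\sigma)\to\sigma$, which is surjective by irreducibility of $\sigma$.

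The heart of the proof is that $i(\chi'_\sigma)$ is itself irreducible, which will follow from Mackey's criterion once I check that the stabilizer of $\chi'_\sigma$ under the conjugation action of $\widetilde{\rm H}$ on characters of $\widetilde{\rm M}$ equals $\widetilde{\rm M}$ itself. For $\tilde h\in\widetilde{\rm H}$ and $\tilde m\in\widetilde{\rm M}$, the commutator $[\tilde m,\tilde h]$ lies in $\mu_n$, and one computes $\chi'^{\tilde h}_\sigma(\tilde m)=\chi'_\sigma([\tilde m,\tilde h])\chi'_\sigma(\tilde m)$. Because $\chi'_\sigma$ restricted to $\mu_n$ is the canonical injection (genuineness), the equality $\chi'^{\tilde h}_\sigma=\chi'_\sigma$ is equivalent to $[\tilde m,\tilde h]=1$ for every $\tilde m\in\widetilde{\rm M}$, i.e., to $\tilde h$ centralizing $\widetilde{\rm M}$. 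Since $\widetilde{\rm M}$ is maximal abelian by Lemma \ref{center and max}, its own centralizer in $\widetilde{\rm H}$ is itself, and thus $\tilde h\in\widetilde{\rm M}$. Mackey therefore gives the irreducibility, and the formula $\dim i(\chi'_\sigma)=[\widetilde{\rm H}:\widetilde{\rm M}]=[{\rm F}^*:J]=\#\overline{K}=d\ab d\ab^{-\half}$ is immediate.

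For independence of the choice of extension (the remaining, and subtlest, step), observe that the extensions of $\chi_\sigma$ to $\widetilde{\rm M}$ form a torsor under the dual of $\widetilde{\rm M}/\widetilde{C}\cong\overline{J}$, a group of size $d\ab d\ab^{-\half}$. The conjugation action of $\widetilde{\rm H}$ permutes these extensions, and by the stabilizer computation above has trivial isotropy. Since the acting group $\widetilde{\rm H}/\widetilde{\rm M}$ also has cardinality $d\ab d\ab^{-\half}$, the orbit fills the whole torsor. Consequently, if $\chi'_1$ and $\chi'_2$ are any two extensions of $\chi_\sigma$, then $\chi'_1$ appears in $i(\chi'_2)|_{\widetilde{\rm M}}$, and Frobenius reciprocity together with the irreducibility of both $i(\chi'_1)$ and $i(\chi'_2)$ gives $i(\chi'_1)\cong i(\chi'_2)$. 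The most delicate point is matching the commutator pairing arising from Kubota's cocycle \eqref{rao} with the Hilbert-symbol pairing on ${\rm F}^*/{{\rm F}^*}^d$; this rests on the identity $[sec(h(a)),sec(h(b))]=(b,a)_n^2$ together with \eqref{FV fact}, and ultimately on the non-degeneracy of the Hilbert symbol, which is what guarantees the transitivity above.
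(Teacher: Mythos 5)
Your argument is correct, and it is worth noting that the paper itself offers no proof of this lemma beyond citing the Stone--von Neumann theorem in the form given by Weissman (Theorem 3.1 of \cite{Weissman09}); what you have written is, in substance, a self-contained unpacking of that theorem's proof for the Heisenberg-type group $\widetilde{\rm H}$. The ingredients you isolate are exactly the right ones: $\widetilde{\rm M}$ is normal of finite index in $\widetilde{\rm H}$ because $\widetilde{\rm H}/\widetilde{C}$ is abelian; genuineness of $\chi'_\sigma$ together with the commutator identity $[sec(h(a)),sec(h(b))]=(b,a)_n^2$ and \eqref{FV fact} (i.e., the content of Lemma~\ref{center and max}) forces the stabilizer of $\chi'_\sigma$ under conjugation to be the centralizer of $\widetilde{\rm M}$, which equals $\widetilde{\rm M}$ since $\widetilde{\rm M}$ is maximal abelian; Mackey's irreducibility criterion then gives irreducibility of $i(\chi'_\sigma)$ and hence $\sigma\cong i(\chi'_\sigma)$ with $\dim\sigma=[\widetilde{\rm H}:\widetilde{\rm M}]=\#\overline{K}=d\ab d\ab^{-\half}$; and the orbit--stabilizer count matching the size $\#\widehat{\overline{J}}$ of the torsor of extensions gives independence of the chosen extension. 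Two minor points you might make explicit: the decomposition of $W=\widetilde{\rm M}\cdot v$ into $\widetilde{\rm M}$-characters uses that each coset representative of $\widetilde{C}$ in $\widetilde{\rm M}$ acts by an operator whose finite power is the scalar $\chi_\sigma(\cdot)$, hence is diagonalizable, and commuting diagonalizable operators are simultaneously diagonalizable; and the transitivity you invoke is equivalent to surjectivity of the map $\widetilde{\rm H}/\widetilde{\rm M}\to\widehat{\overline{J}}$ induced by the pairing, which is exactly the perfect-pairing property built into the Lagrangian decomposition (Remark~\ref{nicedid}). Your route buys self-containment and makes visible which structural facts --- maximal isotropy of $\overline{J}$ and non-degeneracy of $(\cdot,\cdot)_d$ --- the lemma actually depends on, at the cost of length relative to the paper's one-line citation.
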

\begin{remark}
In \cite{Weissman14} Weissman refers to
$[\widetilde{\rm{H}}:\widetilde{\rm M}]=\sqrt{[\widetilde{\rm{H}}:\widetilde{C}]}$ as the central index of $\widetilde{\rm{H}}$. It arises in the context of Lagrangian decompositions of $\widetilde{T}/ Z\bigl( \widetilde{T}\bigr)$, where $\widetilde{T}$ is a cover of a torus $T$ defined over a local field and $Z \bigl( \widetilde{T}\bigr)$ is its center. In the case of $\mslt$ these Lagrangian decompositions of $\widetilde{\rm{H}}/\widetilde{C}$ are in a natural bijection with the Lagrangian decompositions of ${\rm F}^*/{{\rm F}^*}^d$ discussed in Section \ref{npower}.
\end{remark}
We shall use  $\sigma$   to denote a genuine smooth irreducible representation of  $\widetilde{\rm{H}}$ with a central character $\chi_{\sigma}$ acting on a representation space $V$ and we shall denote by $V^*$ the space of functionals on $V$. For $s \in \C$ we define $\sigma_s$ to be the smooth irreducible  representation of  $\widetilde{\rm{H}}$ acting on $V$ by
$$\sigma_s(t)=\ab t \ab^s \sigma (t),$$
where for $t=\bigl(h(a),\epsilon \bigr) \in \widetilde{\rm{H}}$ we set $\ab t \ab=\ab a \ab$. We also define $\sigma^w$ to be the representation of  $\widetilde{\rm{H}}$ acting on $V$ by
$$t \mapsto \sigma^w(t)=\sigma(wtw^{-1}).$$
We note here that by the cocycle formula, \eqref{rao},  $$w\bigl(h(a), \epsilon\bigr )w^{-1}= \bigl(h(a^{-1}), \epsilon\bigr ).$$
In particular, ${(\sigma_s)}^w$ is also a genuine smooth irreducible representation of  $\widetilde{\rm{H}}$ and
\begin{equation} \label{sigmainter} {(\sigma_s)}^w={(\sigma^w)}_{-s}.\end{equation}
We now consider the genuine principal series representation induced from $\sigma$, defined as usual by
$${\rm I}(\sigma)={\rm Ind}^{\mslt}_{\mb} \delta^{\half} \otimes \sigma$$
Where $\delta (t)=\ab t \ab^2$ is the modular function.

Given $f \in {\rm I}(\sigma)$ and $s \in \C$  we define $f_s$ to be the following function on $\widetilde{\rm{G}}$. Given $g \in \widetilde{\rm{G}}$ we pick $t \in \widetilde{\rm{H}}, \, n \in {\rm N}$ and $k$ an inverse image inside $\widetilde{\rm{G}}$ of an element of ${\rm SL}_2(\Of)$ such that $g=tnk$ and we set
\begin{equation} \label{flatsec} f_s(g)=\ab t \ab^sf(g). \end{equation} As in the linear case one verifies, using the Iwasawa decomposition of $G$, that $f_s$ is a well defined element in ${\rm I}(\sigma_s)$ and that $f\mapsto  f_s$ is an isomorphism of vector spaces. The following is also proven as in the linear case. See for example Section 7 of \cite {Mc}  or Chapter 5 of \cite{BanPhD}.
\begin{lem} \label{inter} Fix $f\in {\rm I}(\sigma)$ and $g \in \widetilde{\rm{G}}$. the integral
\begin{equation} \nonumber \int_F f_s\bigl(w n(x)g \bigr) \, d_{\psi}x \end{equation} converges absolutely to a $V$ valued rational function in $q^{-s}$ provided that $\ab \chi_\sigma\bigl(h(\varpi^d),1 \bigr)\ab <q^{Re(s)d}$ . We shall denote its meromorphic continuation by  $\bigl({\rm A}_{w}(\sigma_s)(f_s)\bigr)(g)$. Away from its poles,

$${\rm A}_{w}(\sigma_s) \in \operatorname{Hom}_{\mslt} \bigl({\rm I}(\sigma_s) ,{\rm I}((\sigma_s)^w) \bigr).$$
\end{lem}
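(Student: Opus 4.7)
The plan is to follow the standard Bruhat-decomposition argument for intertwining operators, taking careful account of the Kubota cocycle \eqref{rao}. I would split the integration as $\int_\F = \int_{\Of} + \int_{\F\setminus\Of}$. The compact piece is harmless: $\{wn(x)\mid x\in\Of\}$ is compact and $f_s$ is smooth, so the integrand is locally constant and bounded on $\Of$, contributing a finite $\C$-linear combination of values of $f_s$ with coefficients independent of $s$. For the tail, a direct matrix computation in $SL_2(\F)$ yields the Bruhat factorization
$$wn(x) = n(-x^{-1})\,h(-x^{-1})\,\iota(x^{-1}), \qquad \iota(y) = \left( \begin{array}{cc} 1 & 0 \\ y & 1 \end{array} \right),$$
valid for $x \neq 0$. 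Lifting to $\mslt$ via $sec$ introduces a $\mu_n$-valued factor from $c$ which is bounded in modulus and hence irrelevant for absolute convergence. For $|x|>1$ one has $\iota(x^{-1}) \in sec(SL_2(\Of))$, so the left $\mb$-equivariance of $f_s$ gives
$$f_s(wn(x)g) = |x|^{-1-s}\,\sigma\bigl(h(-x^{-1}),\epsilon_x\bigr)\, f_s\bigl(\iota(x^{-1})g\bigr)$$
for some $\epsilon_x \in \mu_n$.

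Substituting $y = x^{-1}$ (Jacobian $|y|^{-2}$), the tail becomes an integral over $|y|<1$ with integrand $|y|^{s-1}\,\sigma(h(-y),\epsilon_{y^{-1}}) f_s(\iota(y)g)$. Decomposing dyadically over $\{|y|=q^{-m}\}$ and using local constancy of $f_s$ and of $\sigma$, each annulus reduces to a finite sum of terms proportional to $q^{-ms}\sigma(h(-\varpi^m u_i))\,f_s(\iota(\varpi^m u_i)g)$. By Lemma \ref{center and max}, $h(a)^d$ lies in $\widetilde{C}$ up to a $\mu_n$-factor, so $\sigma(h(-\varpi))^d$ is scalar multiplication by $\chi_\sigma(h(\varpi^d),1)$ up to a unit-modulus factor; in particular the spectral radius of $\sigma(h(-\varpi))$ equals $|\chi_\sigma(h(\varpi^d),1)|^{1/d}$. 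The resulting geometric series in $q^{-s}\sigma(h(-\varpi))$ converges in operator norm precisely when $|\chi_\sigma(h(\varpi^d),1)|<q^{Re(s)d}$, and sums to the $V$-valued rational function $(1-q^{-s}\sigma(h(-\varpi)))^{-1}$ applied to a finite $s$-independent combination of values of $f$. Combined with the $\Of$-piece, the total integral is a $V$-valued rational function in $q^{-s}$.

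The intertwining property is formal. Right invariance of $d_\psi x$ gives immediately, for $g_0 \in \mslt$,
$$\bigl(A_w(\sigma_s)(f_s)\bigr)(gg_0) = \int_\F f_s(wn(x)gg_0)\, d_\psi x = \bigl(A_w(\sigma_s)(\rho(g_0)f_s)\bigr)(g).$$
For left $\mb$-equivariance, it suffices to handle $t = (h(a),\epsilon) \in \widetilde{\rm{H}}$. Using the identity $wtw^{-1}=(h(a^{-1}),\epsilon)$ recorded in the paper just before Lemma \ref{center and max} together with $t^{-1}n(x)t = n(a^{-2}x)$, one rewrites $wn(x)t = (wtw^{-1})\cdot wn(a^{-2}x)$, pulls out $\delta^{1/2}(wtw^{-1})\sigma_s(wtw^{-1}) = \delta^{-1/2}(t)(\sigma_s)^w(t)$, and changes variables $x\mapsto a^2x$ with Jacobian $|a|^2 = \delta(t)$; this delivers precisely the transformation factor $\delta^{1/2}(t)(\sigma_s)^w(t)$ required for the image to lie in ${\rm I}((\sigma_s)^w)$. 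Meromorphic continuation in $s$ then extends $A_w(\sigma_s)$ to an intertwiner away from its poles.

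The main obstacle is the precise bookkeeping of the cocycle $c$ in the lift of the Bruhat decomposition: rough convergence needs only norm estimates, but the sharp threshold $|\chi_\sigma(h(\varpi^d),1)|<q^{Re(s)d}$ depends on verifying that $\sigma(h(-\varpi))^d$ reduces to multiplication by the central character of $h(\varpi^d)$ up to a $\mu_n$-factor of absolute value one. Everything past that is standard Tate-style analysis applied fiber-wise on the finite-dimensional space $V$.
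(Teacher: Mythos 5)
The paper gives no proof of this lemma; it simply cites Section 7 of \cite{Mc} and Chapter 5 of \cite{BanPhD} and observes the argument from the linear case carries over. Your write-up is a correct reconstruction of that standard argument: split at $\Of$, apply the Bruhat factorization $wn(x)=n(-x^{-1})h(-x^{-1})\iota(x^{-1})$ on the tail, substitute $y=x^{-1}$, and reduce to a geometric series whose convergence threshold is governed by the spectral radius of the operator $\sigma\bigl(h(-\varpi),1\bigr)$. The equivariance verification is also the standard one; the only caveat there is that one must invoke the paper's observation (stated just above Lemma \ref{center and max}) that $\widetilde{\rm H}$ normalizes $sec({\rm N})$ with no cocycle correction, so that $wn(x)t=(wtw^{-1})\,wn(a^{-2}x)$ holds on the nose in $\mslt$ and not merely modulo $\mu_n$; you use this implicitly.

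One imprecision worth flagging: the tail is not literally the geometric series $\sum_m \bigl(q^{-s}\sigma(h(-\varpi),1)\bigr)^m$. Because of the Kubota cocycle one has $\bigl(h(-\varpi),1\bigr)^m=\bigl(h((-\varpi)^m),\epsilon_m\bigr)$ with $\epsilon_m\in\mu_n$ nontrivial in general, and the Bruhat lift of $wn(x)$ produces a specific (and $x$-dependent) inverse image, not the $m$-th power of a fixed lift. So the precise closed form $\bigl(1-q^{-s}\sigma(h(-\varpi))\bigr)^{-1}$ applied to an $s$-independent vector is not right as written. What is correct, and what you really need, is the consequence: since $\sigma\bigl(h(-\varpi),1\bigr)^d$ is a scalar of modulus $|\chi_\sigma(h(\varpi^d),1)|$ (your argument for this is right, using that the $\mu_n$-valued discrepancy has modulus one and that $(-1)^d$ is a unit on which $\chi_\sigma$ has modulus one), the operators appearing in the annuli $|y|=q^{-m}$ are eventually periodic modulo a ratio with spectral radius $|\chi_\sigma(h(\varpi^d),1)|^{1/d}$, so the tail is a genuine geometric series in $q^{-ds}\chi_\sigma(h(\varpi^d),1)$ times roots of unity, converging exactly in the stated half-plane and summing to a $V$-valued rational function of $q^{-s}$. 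Your conclusion stands; only the intermediate formula should be softened.
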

\subsection{Whittaker functionals} \label{wf}
Let $\psi$ be a character of ${\rm F}$. We view $\psi$ also as a character of ${\rm N}$ by setting $\psi \bigl(n(x)\bigr)=\psi(x).$ Given a representation $(\pi,W)$ of $\mslt$ we shall denote by $\operatorname{Wh}_\psi \bigl(\pi)$ the space of $\psi$-Whittaker functionals on $(\pi,W)$. Namely,
$$\operatorname{Wh}_\psi (\pi)=\operatorname{Hom}_{\rm N}(W,\C_\psi).$$

\begin{lem} \label{wellknown} Let $\sigma$ be a genuine smooth irreducible representation of $\widetilde{\rm{H}}$ acting on $V$. Denote its central character by $\chi_\sigma$ and denote the space of linear functionals on $V$ by $V^*$.\\
1. Assume that  $\ab \chi_\sigma\bigl(h(\varpi^d),1 \bigr)\ab <q^{Re(s)d}$. Fix  $\xi \in V^*$. For $f_s \in {\rm I}(\sigma_s)$ the integral
$$\int_F \xi \Bigl(f_s \bigl(wn(x) \bigr)\Bigr) \, \psi^{-1}(x) \, d_{\psi}x$$ converges absolutely to a polynomial in $q^{-s}$. Moreover, this integral converges in principal value for all $s$. Namely,  $$\lim_{r \rightarrow \infty} \int_{\Pf^{-r}} \xi \Bigl(f_s \bigl(wn(x) \bigr)\Bigr) \, \psi^{-1}(x) \, d_{\psi}x$$ exists for all $s$. \\
2. Denote by $\bigl(J_{\sigma_s,\psi}(\xi)\bigr)(f_s)$ the analytic continuation of the integral defined above. Then, $J_{\sigma_s,\psi}(\xi) \in {\rm Wh}_{\psi}\bigl({\rm I}(\sigma_s) \bigr)$.\\
3. The map $\xi \mapsto J_{\sigma_s,\psi}(\xi)$ is an isomorphism from $V^*$ to  ${\rm Wh}_{\psi}\bigl({\rm I}(\sigma_s) \bigr)$. In particular,
\begin{equation} \label{whidim} \dim \, {\rm Wh}_{\psi}\bigl({\rm I}(\sigma_s) \bigr)=d \ab d \ab^{-\half}.\end{equation}
\end{lem}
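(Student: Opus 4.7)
The plan is to establish parts (1) and (2) by a standard analysis of the Jacquet-type integral on the big Bruhat cell, and to derive (3) from injectivity combined with Bruhat's theorem on Whittaker functionals.

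For (1), I would exploit the Iwasawa decomposition of $wn(x)$ for $|x|$ large. For $|x|$ above a threshold depending on the right-invariance level of $f_s$, one can write $wn(x)=n(y(x))\,\tilde h\,k_x$, where $\tilde h \in \widetilde{\rm H}$ is a lift of $h(-x^{-1})$ determined by the cocycle \eqref{rao} and $k_x$ lies in the splitting of a fixed $sec({\rm K}_l)$ over which $f_s$ is right invariant. The transformation law \eqref{flatsec} then yields
\begin{equation*}
\xi\bigl(f_s(wn(x))\bigr)=|x|^{-s-1}\,\xi\Bigl(\sigma(\tilde h)\,f(sec(k_x))\Bigr),
\end{equation*}
where the $|x|^{-s-1}$ combines the $|x|^{-s}$ twist of \eqref{flatsec} with $\delta^{\half}\bigl(h(-x^{-1})\bigr)=|x|^{-1}$. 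Absolute convergence for $\mathrm{Re}(s)\gg 0$ is then immediate from the hypothesis on $|\chi_\sigma(h(\varpi^d),1)|$ (which controls the growth of $\xi(\sigma(\tilde h)\cdots)$ as $|x|\to\infty$ along $\overline{J}$-cosets). For principal value convergence at all $s$, the key observation is that since $f$ is locally constant and $f_s$ is right $sec({\rm K}_l)$-invariant, on each annulus $|x|\in[q^m,q^{m+1})$ with $m$ large the dependence of the integrand on $x$ factors through the quotient of ${\rm F}^*$ by a fixed compact open subgroup; averaging against $\psi^{-1}$ over such a coset then gives zero once $m$ exceeds the conductor of the resulting character of ${\rm F}^*$, so only finitely many annuli contribute.

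For (2), the Whittaker property follows from the change of variables $x\mapsto x-y$ in the absolutely convergent range, yielding
\begin{equation*}
\int_F \xi\bigl(f_s(wn(x+y))\bigr)\psi^{-1}(x)\,d_\psi x=\psi(y)\bigl(J_{\sigma_s,\psi}(\xi)\bigr)(f_s),
\end{equation*}
which then extends by meromorphic continuation.

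For (3), injectivity of $\xi\mapsto J_{\sigma_s,\psi}(\xi)$ is obtained by fixing $\xi\neq 0$ and a vector $v\in V$ with $\xi(v)\neq 0$, then constructing a section $f_s$ supported in a small neighborhood of $sec(w)$ in the big cell with $f_s(sec(w))=v$; the integral reduces in the convergence region to a non-zero scalar multiple of $\xi(v)$. For \eqref{whidim} I would apply Bruhat's theorem on Whittaker functionals to ${\rm I}(\sigma_s)\bigl|_{sec({\rm N})}$: the two Bruhat cells produce a short exact sequence of ${\rm N}$-modules
\begin{equation*}
0\to C_c^\infty({\rm N},V)\to {\rm I}(\sigma_s)\bigl|_{sec({\rm N})}\to V\to 0,
\end{equation*}
the quotient carrying the trivial ${\rm N}$-action. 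Applying $\operatorname{Hom}_{{\rm N}}(-,\C_\psi)$ annihilates the quotient (since $\psi$ is non-trivial while the quotient is ${\rm N}$-invariant) and, by a direct calculation using integration against $\psi$, identifies $\operatorname{Hom}_{\rm N}(C_c^\infty({\rm N},V),\C_\psi)$ with $V^*$. Combined with Lemma \ref{svn} this yields $\dim\operatorname{Wh}_\psi({\rm I}(\sigma_s))\le \dim V^*=d|d|^{-\half}$, and injectivity of $\xi\mapsto J_{\sigma_s,\psi}(\xi)$ then forces equality, giving the isomorphism.

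The main obstacle is the bookkeeping in part (1): the Kubota cocycle \eqref{rao} inserts the non-trivial genuine twist $\sigma(\tilde h)$ inside $\xi(\cdot)$, and one must verify that the combined $x$-dependence genuinely factors through a compact open quotient of ${\rm F}^*$ so that the $\psi^{-1}$-oscillation closes off the tail uniformly in $s$. Everything else reduces to routine arguments modelled on the linear ${\rm SL}_2$ case.
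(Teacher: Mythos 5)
Your argument is correct and is the standard one: the Jacquet principal-value integral on the big Bruhat cell for parts (1)--(2), and for part (3) the Bruhat filtration of ${\rm I}(\sigma_s)|_{\rm N}$ combined with injectivity of $\xi\mapsto J_{\sigma_s,\psi}(\xi)$. The paper does not spell out a proof at all; it simply cites Theorem~7 of \cite{Mc2}, which establishes exactly these statements by the same method, so your proposal matches the intended route while supplying the details the paper delegates to that reference.
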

\begin{proof} This Lemma is  proven in Theorem 7 in \cite{Mc2} where the author realizes the inducing representation as induction from a particular maximal abelian subgroup of the metaplectic torus.
\end{proof}
\begin{remark} \label{jac id} Note that since $\sigma^w$ acts on $V$ as well, Lemma \ref{wellknown} gives a natural identification between ${\rm Wh}_{\psi}\bigl({\rm I}(\sigma_s) \bigr)$ and ${\rm Wh}_{\psi}\bigl({\rm I}((\sigma_s)^w) \bigr)$ as both spaces are naturally isomorphic to $V^*$. \end{remark}
\section{Shahidi local coefficient matrix} \label{meta sha}
\subsection{Definition} \label{fangao}
Given two representations $\pi$ and $\varsigma$ of $\mslt$ and   $T \in \operatorname{Hom}_{\mslt} \bigl(\pi, \varsigma \bigr)$ one defines by duality
$$T^{\psi}:\operatorname{Wh}_\psi (\varsigma)\rightarrow \operatorname{Wh}_\psi (\pi).$$
Precisely, we set $T^\psi(\xi)=\xi \circ T.$
\begin{remark} \label{plan is lc} If $\pi=\varsigma$ and $A$ is a scalar map corresponding to $\mu \in \C$ then $A^{\psi}$ is also a scalar map corresponding to $\mu$.
\end{remark}
\begin{deff} \label{thedeff} Let $(\sigma,V)$ be a smooth irreducible representation of  $\widetilde{{\rm H} }$.  An Slcm associated with $\sigma$ and $\psi$ is an $\sqrt{[{\rm F}^*:{{\rm F}^*}^d]} \times \sqrt{[{\rm F}^*:{{\rm F}^*}^d]}$ matrix representing $$A^\psi_{w}(\sigma_s):{\rm Wh}_{\psi}\bigl({\rm I}((\sigma_s)^w)  \bigr) \rightarrow {\rm Wh}_{\psi}\bigl({\rm I}(\sigma_s) \bigr)$$
with respect to the ordered bases $J_{(\sigma_s)^w,\psi}(R)$ and $J_{\sigma_s,\psi}(R)$ where $R$ is an ordered basis of $V^*$.
\end{deff}
Note that  an Slcm is associated not only with the isomorphism class of $\sigma$ and a character $\psi$ of ${\rm F}$ but also with a particular realization of the inducing representation and with a choice of an ordered basis $R$ above. However, our construction implies the following.
\begin{prop} \label{withgao} An Slcm associated with $\sigma$ and $\psi$ is unique up to conjugations  by complex matrices. Consequently, the conjugacy class of an Slcm associated with $\sigma$ and $\psi$ is a well defined invariant of $\psi$ and the isomorphism class of  $\sigma$.
\end{prop}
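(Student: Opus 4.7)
The plan is to reduce the claim to linear algebra via the Jacquet isomorphisms identifying both Whittaker spaces with $V^*$, and then to handle separately the two sources of non-canonicity in the construction of an Slcm: the choice of an ordered basis $R$ of $V^*$, and the choice of a realization of $\sigma$ within its isomorphism class.

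First I would fix a realization $(\sigma,V)$, which automatically fixes the realization $(\sigma^w,V)$ of the twisted representation. By Lemma \ref{wellknown} together with Remark \ref{jac id}, the Jacquet maps $\Phi_1:=J_{\sigma_s,\psi}$ and $\Phi_2:=J_{(\sigma_s)^w,\psi}$ are linear isomorphisms from $V^*$ onto $\operatorname{Wh}_\psi({\rm I}(\sigma_s))$ and $\operatorname{Wh}_\psi({\rm I}((\sigma_s)^w))$ respectively. Consequently $B_\sigma:=\Phi_1^{-1}\circ A^\psi_w(\sigma_s)\circ \Phi_2$ is a well-defined endomorphism of $V^*$ depending only on the realization, and the Slcm of Definition \ref{thedeff} is exactly the matrix of $B_\sigma$ in the ordered basis $R$. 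Standard linear algebra then shows that replacing $R$ by another ordered basis conjugates this matrix by the corresponding change-of-basis matrix, so within a fixed realization the Slcm is well-defined up to conjugation.

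Next I would compare two realizations $(\sigma,V)$ and $(\sigma',V')$ of the same isomorphism class, linked by a genuine intertwiner $T\colon V\to V'$. Post-composition with $T$ defines intertwiners $T_s\colon {\rm I}(\sigma_s)\to {\rm I}(\sigma'_s)$ and $T^w_s\colon {\rm I}((\sigma_s)^w)\to {\rm I}((\sigma'_s)^w)$ which, by pulling $T$ through the integral of Lemma \ref{inter}, satisfy the standard compatibility $A_w(\sigma'_s)\circ T_s=T^w_s\circ A_w(\sigma_s)$. The key further input is the functoriality of the Jacquet integral: writing $T^*\colon V'^*\to V^*$ for the dual of $T$, a direct manipulation inside the integral of Lemma \ref{wellknown} (in the range of absolute convergence) gives
\[
(T_s)^\psi\circ J_{\sigma'_s,\psi}=J_{\sigma_s,\psi}\circ T^*,
\]
and the analogous identity holds on the $\sigma^w$-side; both extend to all $s$ by meromorphic continuation. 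Dualising the compatibility of intertwining operators and substituting these functoriality identities yields $T^*\circ B_{\sigma'}=B_\sigma\circ T^*$, so $B_\sigma$ and $B_{\sigma'}$ are conjugate as abstract endomorphisms. Since the matrices of conjugate endomorphisms in any choice of ordered bases are conjugate complex matrices, the two steps combine to give that any two Slcms built from $\sigma$ and $\psi$ lie in a single conjugacy class, and the second assertion of the proposition follows immediately.

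I do not expect a substantive obstacle; the only delicate point is the functoriality identity for $J$, which is however a one-line calculation inside the Whittaker integral once one is in the region of absolute convergence provided by Lemma \ref{wellknown}.
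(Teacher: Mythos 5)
Your proof is correct and fills in exactly the argument the paper leaves implicit (the paper simply asserts ``our construction implies the following,'' relying on Lemma \ref{wellknown} and Remark \ref{jac id}). You have correctly identified the two sources of non-canonicity, verified the functoriality of the Jacquet integral, and dualized the intertwining-operator compatibility to conclude that the pulled-back endomorphism $B_\sigma$ of $V^*$ is well defined up to conjugation; this is the intended reasoning.
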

\begin{deff} \label{trace and det}  Define $T(\sigma,s,\psi)$ and $D(\sigma,s,\psi)$ respectively to be the trace and determinant  of an Slcm associated with $\sigma$ and $\psi$. By Proportion \ref{thedeff} these are well defined invariants of $\sigma$ and $\psi$. In particular, $T(\sigma,s,\psi)$ and $D(\sigma,s,\psi)$ depend only on $\psi$ and $\chi_\sigma$.
\end{deff}

\begin{remark} Since the unipotent radical of ${\rm H}$ is trivial, $V^*$ is the space of Whittaker functionals on $\sigma$. Thus, the integral of Jacquet-type given in Lemma \ref{wellknown} identifies the space of Whittaker functionals on $\sigma$ with the space of Whittaker functionals on ${\rm I}(\sigma_s)$. The same construction applies for parabolic induction on other metaplectic groups where the parabolic subgroup in discussion is not necessarily minimal. A similar identification is already implicit in the work of Shahidi on quasi split reductive groups, \cite{Sha1}, where the space of Whittaker functionals on the inducing representation is at most one dimensional. Therefore, one may generalize  Definitions \ref{thedeff} and \ref {trace and det} for general parabolic induction on metaplectic groups such that an analog of Proposition \ref{withgao} holds.
\end{remark}
\subsection{A convenient model} \label{modelsection}
From this point we assume that $n$ is not divisible by 4. This assumption implies that $\# \overline{J}$, $\# \overline{K}$ and $[{\rm F}^*:{{\rm F}^*}^d]$ are odd. For a character $\chi$ of ${\rm F}^*$ we define $$\chi_\psi :\widetilde{\rm{H}} \rightarrow \C$$ by
$$\chi_\psi \bigl(h(a),\epsilon)=\epsilon \chi(a) \begin{cases} 1 & n \, \operatorname{is} \,  \operatorname{odd };\\  \gamma_\psi(a)^{-1}  & n  \equiv 2 \, (\operatorname{mod }4). \end{cases}$$
We note that $\chi_\psi$ is not a character of $\widetilde{\rm{H}}$. Rather it is a quasi linear character in the  sense defined and studied in \cite{AG}.  When convenient we shall also denote by $\chi_\psi $,   the function on ${\rm F}^*$  defined by $a \mapsto  \chi_\psi\bigl((h(a),1\bigr).$

Denote the restriction of $\chi_\psi$ to $\widetilde{\rm M}$ and to $\widetilde{C}$ by $\chi'_\psi $ and $\chi''_\psi $  respectively. Observe that by Lemma \ref{res hil} and by \eqref{weil prop}, $\chi'_\psi $ and $\chi''_\psi $ are genuine characters of $\widetilde{\rm M}$ and $\widetilde{C}$  respectively.
Moreover, the set of genuine characters of  $\widetilde{C}$ is a principal homogeneous space, i.e. a torsor, for the group of characters of ${{\rm F}^*}^d$ so all the genuine characters of  $\widetilde{C}$ are obtained by twisting one genuine character by a character of ${{\rm F}^*}^d$. Since all the characters of  ${{\rm F}^*}^d$ arise from restricting characters of ${\rm F}^*$ it follows that all the genuine characters of $\widetilde{C}$ are of the form  $\chi''_\psi $ for some character $\chi$ of ${\rm F}^*$. Similar reasoning applies to   $\widetilde{\rm M}$.

Given $\sigma$, a genuine smooth irreducible  representation of  $\widetilde{\rm{H}}$ whose central character is $\chi_\sigma$ we choose a character $\chi$ of ${\rm F}^*$ such that $\chi''_\psi=\chi_\sigma$. Utilizing Lemma  \ref{svn} we assume without loss of generality that $$(\sigma,V)=\bigl(i(\chi'_\psi),{\rm Ind}_ {\widetilde{\rm M}}^{\widetilde{\rm{H}}} \chi'_\psi\bigr).$$ According to our notations, $\sigma_s$ and $(\sigma_s)^w$ act on  the same space as $\sigma$ by
$$\bigl(\sigma_s(t)f\bigr)=\ab t \ab^s f(gt), \quad \bigl((\sigma_s)^w(t)f\bigr)=\ab t \ab^{-s} f(gt^w).$$
For $k \in K$ we define $\xi_{\chi,k} \in \bigl({\rm Ind}_ {\widetilde{\rm M}}^{\widetilde{\rm{H}}} \chi'_\psi\bigr)^*$ by $$\xi_{\chi,k}(f)=\chi_\psi(k)^{-1}f\bigl(h(k),1).$$
Observe that for $c \in {{\rm F}^*}^d$ we have $\xi_{\chi, kc}=\xi_{\chi,k}$. Thus, since Remark \ref{nicedid} implies that $\overline{K} \cong \widetilde{\rm{H}}/\widetilde{\rm M}$
it follows that
$$R_\chi=\{\xi_{\chi,k} \mid k \in \overline{K}\}$$ is a well defined  ordered basis for $\bigl({\rm Ind}_ {\widetilde{\rm M}}^{\widetilde{\rm{H}}} \chi'_\psi\bigr)^*$.  In Section \ref{thecomp} we shall compute the matrix representing
$$A^\psi_{w}(\sigma_s):{\rm Wh}_{\psi}\bigl({\rm I}((\sigma_s)^w)  \bigr) \rightarrow {\rm Wh}_{\psi}\bigl({\rm I}(\sigma_s) \bigr)$$
with respect to $J_{(\sigma_s)^w,\psi}(R_{\chi})$ and $J_{\sigma_s,\psi}(R_{\chi})$. We note that this Slcm is determined by the Lagrangian decomposition $L=(J,K)$ and by $\chi$ and that it is a function on $\overline{K} \times \overline{K}$. We shall denote it by
$$\tau_{_L}(\cdot,\cdot,\chi,s,\psi):\overline{K} \times \overline{K} \rightarrow \C[q^{-s}].$$

The rest of this section deals with the technical issue of finding a convenient model for the principal series representation in discussion. We shall write $\chi_{\psi,s}$, $\chi'_{\psi,s}$ and $\chi''_{\psi,s}$ for ${(\chi_s)}_\psi$, ${(\chi'_s)}_\psi$ and ${(\chi''_s)}_\psi$ respectively. We shall think of the representation space of ${\rm I}\bigl(\sigma_s\bigr)$  as the space of functions $$h: \widetilde{\rm{H}} \times \mslt \rightarrow \C$$ which are smooth from the right in the right argument and which satisfy
$$h(t^+t_0,tng)=\ab t \ab^{s+1}\chi'_\psi(t^+)h(t_0t,g)$$
for all $t,t_0 \in \widetilde{\rm{H}}, \, t^+ \in \widetilde{\rm M}, \, n\in {\rm N}, \, g\in \mslt.$ Similarly we think of the representation space of ${\rm I}\bigl((\sigma_s)^w\bigr)$ as the space of functions $$r:\widetilde{\rm{H}} \times \mslt  \rightarrow \C$$ which are smooth from the right in the right argument and which satisfy
$$r(t^+t_0,tng)=\ab t \ab^{-s+1}\chi'_\psi(t^+)r(t_0t^w,g)$$
for all $t,t_0 \in \widetilde{\rm{H}}, \, t^+ \in \widetilde{\rm M}, \, n\in {\rm N}, \, g\in \mslt.$
$\mslt$ acts on both spaces by right translations on the right argument.

Conforming with the standard formalism in the literature, dating back to \cite{KP}, we now introduce another model for the principal series representations, namely, induction by stages. Define
$$\operatorname{Ind}_{\widetilde{\rm M}{\rm N}}^{\mslt} \delta^\half \otimes \chi'_{\psi,s}.$$
to be space of smooth from the right functions   $$g_s:\mslt \rightarrow \C$$ such that
$$g_s(tng)=\chi'_{\psi,s}(t)\ab t \ab f(g)$$
for all $t \in  \widetilde{\rm M}, \, n\in {\rm N}, \, g\in \mslt.$ Denote by  ${\rm I}\bigl(\chi'_{\psi,s}\bigr)$  the representation of $\mslt$ acting on $\operatorname{Ind}_{\widetilde{\rm M}{\rm N}}^{\mslt} \delta^\half \otimes \chi'_{\psi,s}$ by right translations. We have the natural $\mslt$ isomorphisms
$$M_s:{\rm I}\bigl(\sigma_s\bigr) \rightarrow{\rm I}\bigl(\chi'_{\psi,s}\bigr), \quad N_s:{\rm I}\bigl({\chi'^{-1}}_{\psi,-s}\bigr) \rightarrow{\rm I}\bigl((\sigma_s)^w\bigr)$$
defined by
$$\bigl(M_s(h)\bigr)(g) = h\bigl((I_2,1),g\bigr), \quad \bigl(N_s(f)\bigr)(t,g) =\ab t \ab^{1-s} f(t^wg).$$
Their inverses
$$(M_s)^{-1}:  {\rm I}\bigl(\chi'_{\psi,s}\bigr)   \rightarrow{\rm I}\bigl(\sigma_s\bigr), \quad (N_s)^{-1}:{\rm I}\bigl((\sigma_s)^w\bigr) \rightarrow{\rm I}\bigl({\chi'^{-1}}_{\psi,-s}\bigr) $$
are given by
$$\bigl((M_s)^{-1}(f)\bigr)(t,g)=\ab t \ab^{-1-s} f(tg), \quad \bigl((N_s)^{-1}(r)\bigr)(g) = r\bigl((I_2,1),g\bigr).$$

\begin{lem} \label{comdia}The following  diagram is commutative.
$$\begin{CD}
{\rm I}\bigl(\chi'_{\psi,s}\bigr)  @<{M_s}<<{\rm I}\bigl(\sigma_s\bigr)  \\
@V{  {\rm A}_w\bigl(\chi'_{\psi,s}\bigr)}VV    @V{ {\rm A}_w(\sigma_s)}VV \\
{\rm I}\bigl({\chi'^{-1}}_{\psi,-s}\bigr) @>{N_s}>> {\rm I}\bigl((\sigma_s)^w\bigr)
\end{CD}$$
Here, for $h_s=M_s(f_s) \in {\rm I}\bigl(\chi'_{\psi,s}\bigr)$ and $g \in \widetilde{\rm{G}}$,  $ \bigl({\rm A}_w\bigl(\chi'_{\psi,s}\bigr)(h_s)\bigl)(g)$ is the meromorphic continuation of
$$ \int_F h_s\bigl(wn(x)g \big) \, d_\psi x.$$
This integral converges absolutely wherever $A(\sigma_s)$ converges absolutely.
\end{lem}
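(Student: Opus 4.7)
The plan is to verify commutativity by chasing an element $f_s \in {\rm I}(\sigma_s)$, viewed in the two-variable $(t,g)$ model, around both sides of the diagram. The key structural fact is that for $t = (h(a), \epsilon) \in \widetilde{\rm H}$, the cocycle formula \eqref{rao} and the identification $w h(a) w^{-1} = h(a^{-1})$ give $tw = w t^w$, and a direct conjugation yields $t^w n(x) (t^w)^{-1} = n(a^{-2}x)$. Therefore
$$t \cdot w n(x) \cdot g \;=\; w \cdot n(a^{-2}x) \cdot t^w g,$$
and this single identity powers the whole computation.

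Starting from the right column, the defining equivariance of $f_s$ specialized to $t^+ = I_2$ yields $f_s(t, g') = |t|^{-(s+1)} f_s\bigl((I_2,1), t g'\bigr)$, so
$$\bigl({\rm A}_w(\sigma_s)(f_s)\bigr)(t, g) \;=\; \int_F f_s(t, wn(x)g)\, d_\psi x \;=\; |t|^{-(s+1)} \int_F f_s\bigl((I_2,1),\; t\cdot wn(x)\cdot g\bigr)\, d_\psi x.$$
Substituting the commutation identity above and changing variable $y = a^{-2}x$ (with Jacobian $|a|^{2} = |t|^{2}$) converts this into
$$|t|^{1-s}\int_F M_s(f_s)\bigl(wn(y)\,t^w g\bigr)\, d_\psi y \;=\; |t|^{1-s}\,\bigl({\rm A}_w(\chi'_{\psi,s})(M_s(f_s))\bigr)(t^w g),$$
which by the definition of $N_s$ is exactly $\bigl(N_s \circ {\rm A}_w(\chi'_{\psi,s}) \circ M_s\bigr)(f_s)(t, g)$. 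This proves commutativity.

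The absolute-convergence claim then falls out of the same manipulation: the identity above expresses $\int_F h_s(wn(x)g)\, d_\psi x$ with $h_s = M_s(f_s)$ in terms of the integral defining ${\rm A}_w(\sigma_s)(f_s)$ via an invertible change of variable and a $|t|$-power prefactor, so both converge absolutely on the same region of $s$-space supplied by Lemma~\ref{inter}. The only real bookkeeping hurdle is keeping the various powers of $|t|$ consistent across the two models and the normalizations of $M_s$ and $N_s$; once these are carefully laid out the argument contains no conceptual obstacle.
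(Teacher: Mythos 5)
Your proposal is correct and takes the same route the paper indicates: the paper's proof is simply the observation that one must verify $(N_s)^{-1}\circ{\rm A}_w(\sigma_s)\circ(M_s)^{-1}={\rm A}_w(\chi'_{\psi,s})$ by ``a direct computation,'' and what you have written is exactly that computation, fleshed out. The only point you gloss over---that the metaplectic identity $t\cdot wn(x)=w\cdot n(a^{-2}x)\cdot t^w$ holds without any extraneous $\mu_n$ factor---does follow from facts the paper has already established ($\widetilde{\rm H}$ normalizes $sec({\rm N})$ via the section, and $w(h(a),\epsilon)w^{-1}=(h(a^{-1}),\epsilon)$), together with the bilinearity/antisymmetry of the Hilbert symbol, so the cocycle terms cancel and the $|t|$-power bookkeeping (the $|t|^{-(s+1)}$ from the two-variable equivariance and $|t|^{2}$ from the change of variable combine to the $|t|^{1-s}$ in $N_s$) is exactly right.
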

\begin{proof}
Given Lemma \ref{inter} one only needs to show that wherever  ${\rm A}_w(\sigma_s)$ is given by an absolutely convergent integral,
$$\bigl( (N_s)^{-1} \circ {\rm A}_w(\sigma_s) \circ (M_s)^{-1} (f_s)\bigr)(g)=\int_F f_s\bigl(wn(x)g \big) \, d_\psi x.$$
This is a matter of a direct computation.
\end{proof}

The diagram in Lemma \ref{comdia} gives rise to the dual commutative diagram
$$\begin{CD}
\operatorname{Wh}_\psi \bigl({\rm I}\bigl(\chi'_{\psi,s}\bigr)\bigr)  @>{M^\psi_s}>> \operatorname{Wh}_\psi \bigl( {\rm I}\bigl(\sigma_s\bigr)\bigr)  \\
@A{  A^\psi_w\bigl(\chi'_\psi,s\bigr)}AA    @A{ {\rm A}_w^\psi(\sigma_s)}AA \\
\operatorname{Wh}_\psi \bigl({\rm I}\bigl({\chi'^{-1}}_{\psi,-s} \bigr)\bigr) @<{N^\psi_s}<< \operatorname{Wh}_\psi \bigl( {\rm I}\bigl((\sigma_s)^w\bigr)\bigr)
\end{CD}$$
Fix $k \in \overline{K}$. We define  $\lambda_{k,\chi,\psi,s} \in \operatorname{Wh}_\psi \bigl({\rm I}\bigl(\chi'_{\psi,s}\bigr)\bigr)$ by
\begin{equation} \label{lamup} \lambda_{k,\chi,\psi,s}=\bigl((M_s^\psi)^{-1} \circ J_{\sigma_s,\psi}  \bigr) (\xi_{\chi,k}). \end{equation}

\begin{lem} \label{forlambda} Fix $k \in \overline{K}$. Let $y$ be any representative of $k$ in $K$. $\lambda_{k,\chi,\psi,s}$ is the analytic continuation of
 $$h_s=M_s(f_s) \mapsto \chi_{\psi,s+1}(y)^{-1} \int_{\rm F} h_s\bigl((h(y),1)wn(x)\bigr)\psi^{-1}(x) \, d_{\psi}x.$$
This integral converges absolutely if  $\ab \chi\bigl(\varpi^d) \ab <q^{Re(s)d}$ to a polynomial in $q^{-s}$ and converges in principal value for all $s$. Namely, for all $s \in \C$ and $h_s=M_s(f_s) \in{\rm I}\bigl(\chi'_{\psi,s}\bigr)$
$$\lim_{r \rightarrow \infty} \int_{\Pf^{-r}} h_s\bigl((h(y),1)wn(x)\bigr)\psi^{-1}(x) \, d_{\psi}x $$ exists.
\end{lem}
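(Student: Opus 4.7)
The plan is to unwind \eqref{lamup} on a test section $h_s = M_s(f_s)$ and invoke the integral formula for the Jacquet functional from Lemma \ref{wellknown}. By the general duality principle, for any $T \in \operatorname{Hom}_{\mslt}(\pi, \varsigma)$ and $\mu \in \operatorname{Wh}_\psi(\varsigma)$ one has $T^\psi(\mu) = \mu \circ T$. Applied to \eqref{lamup} this yields, for $h_s = M_s(f_s)$,
$$\lambda_{k,\chi,\psi,s}(h_s) = J_{\sigma_s, \psi}(\xi_{\chi,k})(f_s).$$
By Lemma \ref{wellknown}, the right-hand side is (the analytic continuation of)
$$\int_{\rm F} \xi_{\chi,k}\bigl(f_s(wn(x))\bigr)\, \psi^{-1}(x)\, d_\psi x.$$
In particular, the analytic assertions of the lemma — absolute convergence when $\ab \chi(\varpi^d) \ab < q^{Re(s) d}$ to a polynomial in $q^{-s}$, and principal value convergence for all $s$ — will follow immediately from Lemma \ref{wellknown}. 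Only the algebraic identification of the integrand remains.

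To carry out that identification I use the two-argument description of ${\rm I}(\sigma_s)$ recalled just before the lemma: setting $H(t_0, g) = \bigl(f_s(g)\bigr)(t_0)$, we have
$$H(t^+ t_0, tng) = \ab t \ab^{s+1} \chi'_\psi(t^+) H(t_0 t, g),$$
while $h_s(g) = H\bigl((I_2, 1), g\bigr)$ is the image of $f_s$ under $M_s$. Fix a representative $y \in K$ of $k \in \overline{K}$ and apply the covariance with $t_0 = (I_2,1)$, $t = (h(y),1)$, $t^+ = 1$, $n = 1$:
$$H\bigl((I_2,1),\, (h(y),1)\, g\bigr) = \ab y \ab^{s+1}\, H\bigl((h(y),1), g\bigr),$$
so that $H\bigl((h(y),1), wn(x)\bigr) = \ab y \ab^{-s-1}\, h_s\bigl((h(y),1) wn(x)\bigr)$. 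Since the vector $f_s(wn(x)) \in V$ corresponds to the function $t_0 \mapsto H(t_0, wn(x))$, unfolding the definition of $\xi_{\chi,k}$ on it gives
$$\xi_{\chi,k}\bigl(f_s(wn(x))\bigr) \;=\; \chi_\psi(y)^{-1}\, H\bigl((h(y),1), wn(x)\bigr) \;=\; \chi_\psi(y)^{-1}\,\ab y \ab^{-s-1}\, h_s\bigl((h(y),1)\, wn(x)\bigr).$$

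Finally, the definition of $\chi_\psi$ in Section \ref{modelsection} immediately yields $\chi_{\psi, s+1}(y) = \chi_\psi(y)\,\ab y \ab^{s+1}$ — trivially in the odd case, and in the $n \equiv 2\,(\operatorname{mod }4)$ case because the Weil-index twist $\gamma_\psi(y)^{-1}$ appears identically on both sides. Pulling the scalar $\chi_{\psi, s+1}(y)^{-1}$ out of the integral produces the formula in the lemma, and the independence of the right-hand side from the choice of representative $y$ is a consistency check matching the well-definedness of $\xi_{\chi, k}$ on $\overline{K}$ already noted after its construction. The expected obstacle is purely bookkeeping: keeping the sign of the exponent $s+1$ correct in the two-argument covariance and correctly tracking the $\gamma_\psi$-twist hidden inside $\chi_\psi$; no analytic input beyond Lemma \ref{wellknown} is required.
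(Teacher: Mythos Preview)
Your proof is correct and follows essentially the same approach as the paper's. The paper's proof is terser: it simply asserts that ``a straightforward computation'' yields
$$\bigl((M_s^\psi)^{-1} \circ J_{\sigma_s,\psi}(\xi_{\chi,k})\bigr)(h_s) = \chi_{\psi}(y)^{-1}\,\ab y \ab^{-s-1}\int_{\rm F} h_s\bigl((h(y),1)wn(x)\bigr)\psi^{-1}(x)\, d_{\psi}x,$$
attributing the factor $\ab y \ab^{-s-1}$ to the formula for $(M_s)^{-1}$ and the factor $\chi_\psi(y)^{-1}$ to the normalization of $\xi_{\chi,k}$, and then invokes Lemma~\ref{wellknown}. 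You carry out the same computation explicitly via the two-argument covariance, which is exactly what the paper's ``straightforward computation'' amounts to.
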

\begin{proof} A straightforward computation shows that wherever $J_{\sigma_s,\psi}$ is given by an absolutely convergent integral we have
$$\bigl((M_s^\psi)^{-1} \circ J_{\sigma_s,\psi}  (\xi_{\chi,k}) \bigr)(h_s) = \chi_{\psi}(y)^{-1} \ab y \ab^{-s-1}\int_{F} h_s\bigl((h(y),1)wn(x)\bigr)\psi^{-1}(x) \, d_{\psi}x.$$
(the factor $ \ab y \ab^{-s-1}$ is coming from the definition of $(M_s)^{-1}$ while the factor  $\chi_{\psi}(y)^{-1}$ originates from the normalization of $\xi_{\chi,k}$). This lemma now follows from Lemma \ref{wellknown}.
\end{proof}
\begin{lem} \label{lamdown}
$$N_s^\psi \circ J^\psi_{{\sigma_s}^w} (\xi_{\chi,k})=\lambda_{k^{-1},\chi^{-1},\psi,-s}.$$
\end{lem}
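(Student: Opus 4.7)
The plan is to evaluate both sides at an arbitrary test vector $g_s \in {\rm I}\bigl(\chi'^{-1}_{\psi,-s}\bigr)$, unwind them as integrals using Lemmas \ref{wellknown} and \ref{forlambda}, and verify that the resulting integrals and scalar prefactors match.

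For the left-hand side, by the definition of the dual map and of $J_{(\sigma_s)^w,\psi}$ in Lemma \ref{wellknown},
$$\bigl(N_s^\psi \circ J^\psi_{(\sigma_s)^w}(\xi_{\chi,k})\bigr)(g_s) = J_{(\sigma_s)^w,\psi}(\xi_{\chi,k})\bigl(N_s(g_s)\bigr)$$
is the analytic continuation of $\int_{\rm F}\xi_{\chi,k}\bigl(N_s(g_s)\bigl(wn(x)\bigr)\bigr)\psi^{-1}(x)\,d_\psi x$. Using $\bigl(N_s(g_s)\bigr)(t,g)=\ab t\ab^{1-s}g_s(t^wg)$ together with the identity $w\bigl(h(k),1\bigr)w^{-1}=\bigl(h(k^{-1}),1\bigr)$ from the cocycle formula \eqref{rao}, the inner evaluation becomes
$$\xi_{\chi,k}\bigl(N_s(g_s)\bigl(wn(x)\bigr)\bigr)=\chi_\psi(k)^{-1}\ab k\ab^{1-s}g_s\bigl((h(k^{-1}),1)wn(x)\bigr).$$

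For the right-hand side, I apply Lemma \ref{forlambda} with $\chi$ replaced by $\chi^{-1}$, $s$ replaced by $-s$, and with $y=k^{-1}$ chosen as representative of $k^{-1}\in\overline{K}$. This gives
$$\lambda_{k^{-1},\chi^{-1},\psi,-s}(g_s)=(\chi^{-1})_{\psi,-s+1}(k^{-1})^{-1}\cdot\text{a.c.}\int_{\rm F}g_s\bigl((h(k^{-1}),1)wn(x)\bigr)\psi^{-1}(x)\,d_\psi x.$$
So the integrals coincide, and the only thing to check is the scalar identity
$$(\chi^{-1})_{\psi,-s+1}(k^{-1})^{-1}=\chi_\psi(k)^{-1}\ab k\ab^{1-s}.$$

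Unwrapping the definition of $\chi_\psi$, both sides factor as $\chi(k)^{-1}\ab k\ab^{1-s}$ times a Weil-index correction that is trivial in the odd $n$ case. Thus the identity reduces to showing $\gamma_\psi(k^{-1})=\gamma_\psi(k)$ when $n\equiv 2\,(\operatorname{mod }4)$. This is the main obstacle, and it follows from the Weil-index calculus recalled in Section \ref{weilindexdef}: using \eqref{weil prop}, $\gamma_\psi(k)\gamma_\psi(k^{-1})(k,k^{-1})_2=\gamma_\psi(1)=1$, together with $(k,k^{-1})_2=(k,-1)_2$ (by the standard manipulation $(k,k^{-1})_2=(k,-1)_2(k,-k^{-1})_2$ combined with $(k,-k)_2=1$ and $(k,k)_2^{-2}=1$) and \eqref{weilchangepsiinv} which gives $\gamma_\psi(k)^{-1}=(-1,k)_2\gamma_\psi(k)$. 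Substituting yields $\gamma_\psi(k^{-1})=\gamma_\psi(k)$, completing the matching of scalars and hence the lemma.
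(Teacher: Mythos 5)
Your proof is correct and follows essentially the same approach as the paper's: unwind both sides as Jacquet-type integrals using the explicit form of $N_s$ and Lemma \ref{forlambda}, note that the cocycle gives $w(h(k),1)w^{-1}=(h(k^{-1}),1)$, and match the scalar prefactors, which comes down to $\gamma_\psi(k^{-1})=\gamma_\psi(k)$ when $n\equiv 2\pmod 4$. The paper simply asserts that last Weil-index identity without justification, whereas you verify it via \eqref{weil prop} and \eqref{weilchangepsiinv}; your verification is sound, though it can be shortened by observing directly that $\gamma_\psi(k^{-1})=\gamma_\psi(k\cdot k^{-2})=\gamma_\psi(k)\gamma_\psi(k^{-2})(k,k^{-2})_2=\gamma_\psi(k)$, using that $\gamma_\psi$ is trivial on squares and $(k,k)_2^{-2}=1$.
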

\begin{proof}  Fix a representative $y$ of $k$ in $K$. By a straightforward computation, almost identical to the one used in Lemma \ref{forlambda} one shows that wherever $J_{(\sigma_s)^w,\psi}$ is given by an absolutely convergent integral, for $h_{-s}=M_{-s}(f_{-s})$ we have
$$\bigl(N_s^\psi\circ J_{(\sigma_s)^w,\psi}  (\xi_{\chi,k}) \bigr)(h_{-s}) = \chi_{\psi}(y)^{-1} \ab y \ab^{-s+1}\int_{F} h_{-s}\bigl((h(y^{-1}),1)wn(x)\bigr)\psi^{-1}(x) \, d_{\psi}x$$
(the inversion of $y$ inside the integral is due to the conjugation by $w$ in the definition of $N_{s}$). Recalling the definition of $\chi_\psi$ and taking into account the fact that $\gamma_\psi(y)=\gamma_\psi(y^{-1})$ we obtain
$$\chi_{\psi}(y)^{-1} \ab y \ab^{-s+1}=\bigl((\chi^{-1})_{\psi,1-s}(y^{-1})\bigr)^{-1}.$$
Thus, using Lemma \ref{forlambda} this lemma follows wherever the integral defining $ J_{(\sigma_s)^w,\psi}$ is absolutely convergent. By means on analytic continuation, it is now proven for all $s$.
\end{proof}
Combining \eqref{lamup} and Lemma \ref{lamdown} we have proven the following
\begin{prop}
\begin{equation} \label{mykpmatrix}A^\psi_w\bigl(\chi'_\psi,s\bigr)\bigl(\lambda_{a^{-1},\chi^{-1},\psi,-s}\bigr)=\sum_{b \in \overline{K}} \tau_{_L}(a,b,\chi_{},s,\psi)\lambda_{b,\chi,\psi,s}. \end{equation}
\end{prop}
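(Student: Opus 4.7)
The proposition is a formal consequence of the dual commutative diagram together with the definition of $\tau_L$ and the explicit formulas for the Whittaker functionals $\lambda_{k,\chi,\psi,s}$ in terms of $M_s$ and $N_s$. My plan is to chase the dual diagram backward, starting from the left-hand side.

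First, I would apply Lemma \ref{lamdown} with $k=a$ to rewrite the argument on the left-hand side as
$$\lambda_{a^{-1},\chi^{-1},\psi,-s}=N_s^\psi\bigl(J_{(\sigma_s)^w,\psi}(\xi_{\chi,a})\bigr),$$
so that the expression $A^\psi_w(\chi'_\psi,s)\bigl(\lambda_{a^{-1},\chi^{-1},\psi,-s}\bigr)$ becomes $A^\psi_w(\chi'_\psi,s)\circ N_s^\psi\bigl(J_{(\sigma_s)^w,\psi}(\xi_{\chi,a})\bigr)$. Next I invoke the dual commutative diagram obtained by dualizing Lemma \ref{comdia}. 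Since $M_s$ is a vector-space isomorphism, so is $M_s^\psi$, and the commutativity reads as the operator identity
$$A^\psi_w(\chi'_\psi,s)\circ N_s^\psi=(M_s^\psi)^{-1}\circ A^\psi_w(\sigma_s).$$
Applied to $J_{(\sigma_s)^w,\psi}(\xi_{\chi,a})$, this turns the left-hand side into $(M_s^\psi)^{-1}\bigl(A^\psi_w(\sigma_s)\bigl(J_{(\sigma_s)^w,\psi}(\xi_{\chi,a})\bigr)\bigr).$

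Now I invoke the very definition of the Slcm. By Definition \ref{thedeff}, the numbers $\tau_L(a,b,\chi,s,\psi)$ are precisely the entries of the matrix of $A^\psi_w(\sigma_s)$ in the ordered bases $J_{(\sigma_s)^w,\psi}(R_\chi)$ and $J_{\sigma_s,\psi}(R_\chi)$, so
$$A^\psi_w(\sigma_s)\bigl(J_{(\sigma_s)^w,\psi}(\xi_{\chi,a})\bigr)=\sum_{b\in\overline{K}}\tau_L(a,b,\chi,s,\psi)\,J_{\sigma_s,\psi}(\xi_{\chi,b}).$$
Applying the linear map $(M_s^\psi)^{-1}$ termwise and recalling \eqref{lamup}, namely $(M_s^\psi)^{-1}\circ J_{\sigma_s,\psi}(\xi_{\chi,b})=\lambda_{b,\chi,\psi,s}$, produces exactly the claimed right-hand side.

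There is no analytic obstacle: all the real work has already been done in setting up the integrals defining $J_{\sigma_s,\psi}$ and $A_w(\sigma_s)$ and in establishing the intertwining diagram (Lemma \ref{comdia}) together with its compatibility with the Whittaker functionals (Lemmas \ref{forlambda} and \ref{lamdown}). The only subtlety to watch is bookkeeping of the directions of arrows in the dual diagram and the fact that one must invert $M_s^\psi$ rather than $N_s^\psi$; once the identity $A^\psi_w(\chi'_\psi,s)\circ N_s^\psi=(M_s^\psi)^{-1}\circ A^\psi_w(\sigma_s)$ is in hand, the proposition is purely formal.
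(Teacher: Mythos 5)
Your proof is correct and is precisely the argument the paper has in mind: it states only that the proposition follows by ``combining \eqref{lamup} and Lemma \ref{lamdown}'' (together with the dual of the commutative diagram in Lemma \ref{comdia} and Definition \ref{thedeff} of $\tau_L$), and your write-up simply makes that diagram chase explicit, with the correct orientation of arrows and the correct identity $A^\psi_w(\chi'_\psi,s)\circ N_s^\psi=(M_s^\psi)^{-1}\circ A^\psi_w(\sigma_s)$.
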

\begin{remark} \label{ichange} Equation \eqref{mykpmatrix} is not an exact analog of the matrix defined in the bottom of Page 75 of \cite{KP} for unramified representations of coverings of ${\rm GL}_n$ using the Lagrangian decomposition given in Example \ref{kp example} and the conjugacy class of the matrix it produces is not equal to the conjugacy class of the matrix computed in \cite{Mc2}. The modifications we made are the following: first, we have $a^{-1}$ rather than $a$ in the left hand side of \eqref{mykpmatrix}. Second, we used the normalization factor $\ab k \ab^s$ in the definition of $\lambda_{k,\chi,\psi,s}$. Both modifications are crucial for the invariance property of the Slcms given in Proposition \ref{withgao} and are imposed on us once we work with delta functionals on $\bigl({\rm Ind}_ {\widetilde{\rm M}}^{\widetilde{\rm{H}}} \chi'_\psi\bigr)^*$. The first modification reflects the action of the Weyl group on the metaplectic torus. We note here that a similar swap of the rows of the scattering matrix utilized in Theorem 1 of \cite{BBB} takes place. Moreover, that swap is also due to the action of the Weyl group.  Last, the normalization factor $\chi_\psi$ in the definition of  $\lambda_{k,\chi,\psi,s}$ is responsible for the fact that the Slcm we study is a function on $\overline{K} \times \overline{K}$ rather than a function on a fixed set of representatives of $\widetilde{\rm{H}}/\widetilde{\rm M} \times \widetilde{\rm{H}}/\widetilde{\rm M}$. This last property facilitates some of the proofs below and simplifies the formula given in Theorem \ref{metasha} below. This simplification is the main reason why we work with a Lagrangian decomposition and not only with a Lagrangian subgroup.
\end{remark}
\begin{remark} \label{for T prop} Recall that if $n$ is odd then $\chi_\psi$ is independent of $\psi$. If $n\equiv 2 \, (\operatorname{mod }4)$
we use \eqref{weilchangepsi} and deduce that
$$\chi_{\psi_x}=\bigl((\cdot,x)_2\chi\bigr)_\psi.$$ This implies that if $n$ is odd and $\tau_{_L}(a,b,\chi_{},s,\psi)$ is an Slcm associated with $\sigma$ and $\psi$ then $\tau_{_L}(a,b,\chi_{},s,\psi_x)$ is an Slcm associated with $\sigma$ and $\psi_x$ while if $n\equiv 2 \, (\operatorname{mod }4)$ and $\tau_{_L}(a,b,\chi_{},s,\psi)$ is an Slcm associated with $\sigma$ and $\psi$ then $\tau_{_L}(a,b,\chi_{},s,\psi_x)$ is an Slcm associated with $(\cdot,x)_2 \otimes \sigma$ and $\psi_x$. Here $(\cdot,x)_2 \otimes \sigma$ is the non-genuine quadratic twist of $\sigma$ given by $$\bigl((\cdot,x)_2 \otimes \sigma\bigr)\bigl(h(a),\epsilon\bigr)=(a,x)_2\sigma\bigl(h(a),\epsilon\bigr).$$
\end{remark}
\subsection{Relation with partial $\gamma$ and $\widetilde{\gamma}$-factors} \label{thecomp}
In this section we shall establish a relation between $\tau_{_L}(a,b,\chi_{},s,\psi)$ and the partial $\gamma$ and $\widetilde{\gamma}$-factors. For the  $n\equiv 2 \, (\operatorname{mod }4)$ cases we need the following. Let $\theta$ be an automorphism of $\widehat{{\rm F}^*/ {{\rm F}^*}^d}$. The notion of a Lagrangian subgroup and the results in Sections \ref{npower}-\ref{MT and PMT} are unchanged if we replace $\eta_x$ by $\theta(\eta_x)$. In particular, if $n \equiv 2 \, (\operatorname{mod }4)$ then $d$ is odd and $m=\frac{d+1}{2} \in \N$ is relatively prime to $d$. Thus, if we define
\begin{equation} \label{new eta}  x\mapsto \eta'_x=\eta^m_x \end{equation}
then $\eta_x \mapsto  \eta'_x$ is an automorphism  of the dual group of ${{\rm F}^*/ {{\rm F}^*}^d}$.
\begin{thm} \label{metasha} Let $\sigma$ be a genuine smooth irreducible  representation of  $\widetilde{\rm{H}}$ with a central character  $\chi_\sigma$. Let $\chi$ be a character of ${\rm F}^*$ such that $\chi''_\psi=\chi_\sigma$. An Slcm associated with $\sigma$ and $\psi$ is given by
$$(a,b) \mapsto\tau_{_L}(a,b,\chi_{},s,\psi)=\begin{cases} \gamma_{_J}(1-s,\chi^{-1}\eta_{ab},\psi,ab^{-1}) &  n \, \operatorname{is} \,  \operatorname{odd }; \\ \\ \widetilde{\gamma}_{_J}(1-s,\chi^{-1}\eta'_{ab},\psi,ab^{-1} \bigr)&  n\equiv 2 \, (\operatorname{mod }4). \end{cases}$$
Here $a,b \in \overline{K}$.
\end{thm}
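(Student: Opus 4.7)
My plan is to compute the $(a,b)$-entry of the Slcm by testing the defining relation \eqref{mykpmatrix} against a carefully chosen test function $h_s\in {\rm I}(\chi'_{\psi,s})$. Fix $b\in\overline{K}$ with a representative (also denoted $b$) in $K$; using the Iwasawa decomposition for $\mslt$ together with the fact that the open Bruhat cell decomposes as the disjoint union $\bigsqcup_{k\in\overline{K}}\widetilde{\rm M}(h(k),1){\rm N}w{\rm N}$, one can construct $h_s$ so that $u\mapsto\chi_{\psi,s+1}(b)^{-1}h_s\bigl((h(b),1)wn(u)\bigr)$ equals a prescribed $\phi\in C_c^\infty({\rm F})$, while the analogous functions attached to the other $\widetilde{\rm{H}}/\widetilde{\rm M}$-cosets vanish identically. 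By Lemma \ref{forlambda} this forces $\lambda_{k,\chi,\psi,s}(h_s)=0$ for $k\neq b$ and $\lambda_{b,\chi,\psi,s}(h_s)=\int_{{\rm F}}\phi(u)\psi^{-1}(u)\,d_\psi u$, so the right-hand side of \eqref{mykpmatrix} collapses to $\tau_{_L}(a,b,\chi,s,\psi)\int\phi(u)\psi^{-1}(u)d_\psi u$, and it remains to compute the left-hand side.

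To compute the left-hand side $\lambda_{a^{-1},\chi^{-1},\psi,-s}\bigl({\rm A}_w(\chi'_{\psi,s})h_s\bigr)$, one unfolds it into a double integral in $x$ and $u$ and invokes two identities in the cover. The first, $wn(x)(h(a^{-1}),1)=(h(a),1)wn(xa^2)$, holds exactly in $\mslt$ since the cocycle computed via \eqref{rao} is $(a,-1)_n^{\pm 2}=1$; the second is the exact Bruhat identity $wn(x)w=(n(-1/x),1)(h(-1/x),1)wn(-1/x)$, again verified by direct computation from \eqref{rao}. After substituting $t=-a/x$, using the semi-invariance of $h_s$ under $\widetilde{\rm M}$ to rewrite $h_s((h(t),1)wn(\cdot))$ on the support $t\in bJ$ (with $t=bj$, $j\in J$), and performing the $u$-integration to extract $\psi(t/a)\int\phi\psi^{-1}$, one is reduced to an explicit single integral over $bJ$ with computable prefactors.

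A final change of variable $v=bj/a\in Ja^{-1}b$ completes the odd case. The Lagrangian identity $(\overline{K},\overline{K})_d=1$ combined with $(a,-1)_n=1$ collapses the ambient Hilbert-symbol factors to $(v,ab)_n=\eta_{ab}^{-1}(v)$, and the character and absolute-value prefactors from $\chi_\psi(j)=\chi(j)=\chi(a)\chi(v)\chi(b)^{-1}$ together with the normalizations $\chi^{-1}_{\psi,-s+1}(a^{-1})^{-1}$ and $\chi_{\psi,s+1}(b)$ all cancel cleanly. What remains is
\[\int_{v\in Ja^{-1}b}\chi(v)\eta_{ab}^{-1}(v)|v|^{s-1}\psi(v)\,d_\psi v,\]
which by Theorem \ref{partial gamma lemma} equals $\gamma_{_J}(1-s,\chi^{-1}\eta_{ab},\psi,ab^{-1})$, as claimed.

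For $n\equiv 2\,(\operatorname{mod}4)$ the same skeleton applies, but $\chi_\psi(j)=\chi(j)\gamma_\psi(j)^{-1}$ contributes an extra Weil-index weight, and the Hilbert symbols now live in $\mu_n=\mu_{2d}$ rather than $\mu_d$. One splits $\gamma_\psi(av/b)$ via \eqref{weil prop}, converts the residual order-$2$ Hilbert symbols into $d$-th powers via \eqref{FV fact}, and uses that $d$ is odd to observe $\mu_n\cong\mu_2\times\mu_d$. A careful bookkeeping then identifies the twisting character of $v$ in the integrand as $(\eta'_{ab})^{-1}(v)$ in the notation of \eqref{new eta}, with $m=(d+1)/2$ being the inverse of $2$ modulo $d$; the extra factor $\gamma_\psi(v)^{-1}$ converts the integral into the integral representation of $\widetilde{\gamma}_{_J}(1-s,\chi^{-1}\eta'_{ab},\psi,ab^{-1})$ from Theorem \ref{partial metagamma lemma}. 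The main technical hurdle throughout is precisely this bookkeeping of cocycles and Weil indices in the even case, and the hypothesis $4\nmid n$ is used crucially so that $d$ is odd and $\eta_x\mapsto\eta'_x$ is a well-defined automorphism of $\widehat{{\rm F}^*/{{\rm F}^*}^d}$.
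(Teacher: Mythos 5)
Your proof follows the same strategy as the paper's: build a delta-type test function supported on the single coset $\widetilde{\rm M}{\rm N}(h(b),1)w{\rm N}$ of the open Bruhat cell so that only the $\lambda_{b,\chi,\psi,s}$ term survives on the right of \eqref{mykpmatrix}, unfold the left side using the Bruhat identity $w n(x) w = n(-1/x)h(-1/x)w n(-1/x)$ together with the commutation of $(h(a^{-1}),1)$ past $wn(x)$, and recognize the resulting single integral as the integral representation of the partial $\gamma$- (resp. $\widetilde{\gamma}$-) factor from Theorem \ref{partial gamma lemma} (resp. Theorem \ref{partial metagamma lemma}), with the Hilbert-symbol bookkeeping resolving to $\eta_{ab}^{-1}$ (resp. $\eta'^{-1}_{ab}$) via the Lagrangian property of $\overline{K}$. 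The only difference from the paper is that you explicitly carry out the unfolding and cocycle computation which the paper outsources to Equation (4.2) of Lemma 4.1 in \cite{GoSz} (noting that the argument extends beyond the case ${\rm gcd}(n,p)=1$), and you use a general $\phi\in C_c^\infty({\rm F})$ where the paper uses the specific normalized indicator $f^{0}$.
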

\begin{proof} Fix $b \in \overline{K}$ and $l \gg 0$. Let $f^{0}\in {\rm I}\bigl(\chi'_{\psi,s}\bigr)$ be the function supported in
$$\widetilde{\rm M}{\rm N}sec\bigl( (h(b),1){\rm K}_lw_0\bigr)=\widetilde{\rm M}{\rm N}(h(b),1)w{\rm K}_l^+,$$
where  ${\rm K}_l^+=\widetilde{{\rm K}_l}\cap {\rm N}$, normalized such that
$$f^{0}(an\bigl(h(b),1\bigr)w k)=\operatorname{Vol}_{\psi}(\Pf^l)^{-1}\chi_{\psi,s+1}(a)$$ for all
$a \in \widetilde{\rm M},  n \in {\rm N}$ and $k \in {\rm K}_l^+$. Arguing as in Lemma 1.31 of \cite{KP} one shows that
$$\lambda_{c,\chi,\psi,s}( f^{0} )=\begin{cases} \bigl(\chi_{\psi,s+1}(b) \bigr)^{-1} & b=c ;\\ 0 & b \neq c . \end{cases}$$
Recalling \eqref{mykpmatrix} we obtain
$$\tau_{_L}(a,b,\chi,s,\psi)=\chi'_{\psi, 1+s}(b) \Bigl(A^\psi_w\bigl(\chi'_{\psi,s}\bigr)\bigl(\lambda_{a^{-1},\chi^{-1},\psi,-s}\bigr) \Bigr) ( f^{0} ).$$
We now argue that  for $Re(s) \gg 0$
\begin{eqnarray}  \nonumber &&  \Bigl(A^\psi_w\bigl(\chi'_{\psi,s}\bigr)\bigl(\lambda_{a^{-1},\chi^{-1},\psi,-s}\bigr) \Bigr) (f^{0}) =  \bigl(\chi_{\psi,s-1}(a) \bigr)^{-1} (-a,b)_n \ab ab \ab ^{-1} \times \\ \nonumber && \lim_{r \rightarrow \infty} \int_{Jba^{-1}\cap \Pf^{-r}} (\chi_s)_\psi(zab^{-1}) (z,ab)_n \psi(z) \, d^*_\psi z.
\end{eqnarray}
Indeed, this was proven as Equation (4.2) in Lemma 4.1 of \cite{GoSz} for the case where $n$ is relatively prime to the residual characteristic of ${\rm F}$ and $L$ is as in Example \ref{kp example}. The reader can verify that the argument there applies to the general case as well.

If $n$ is odd then $\chi_\psi=\chi$, $d=n$ and  $(-a,b)_n=1$ as both $a,b \in \overline{K}$. Thus,
$$\tau_{_L}(a,b,\chi_{\psi},s,\psi)=\lim_{r \rightarrow \infty}\int_{Jba^{-1} \cap \Pf^{-r}} \chi_s(z) \eta^{-1}_{ab}(z)  \psi(z) \, d^*_\psi(z).$$
Using the second assertion in Lemma \ref{partial gamma lemma} we conclude the proof of this  theorem for the  case where $n$ is odd.

Suppose now that $n \equiv 2 \, (\operatorname{mod }4)$. In this case, by \eqref{weilchangepsiinv} we have  $$\chi_\psi(x)= \gamma_\psi(x)^{-1} \chi(x)=\chi(x) \gamma_\psi(x)(x,-1)_2.$$ This gives
$$\! \! \! \! \!  \! \! \! \! \! \! \! \! \! \!  \! \! \! \! \! \! \! \! \! \!  \! \! \! \! \!\tau_{_L}(a,b,\chi_{\psi},s,\psi)=(b,-1)_2(a,b)_2(-a,b)_n \times$$ $$\lim_{r \rightarrow \infty} \int_{Jba^{-1}\cap \Pf^{-r}} \gamma_\psi(z)^{-1} \cdot (z,ab)_2 \cdot \chi_s(z) \cdot(z,ab)_n \cdot \psi(z) \, d^*_\psi z.$$
Note now that since $n=2d$ it follows that
$$(z,ab)_2(z,ab)_n=(z,ab)^d_n(z,ab)_n=(z,ab)^{d+1}_n.$$
With the notation introduced in \eqref{new eta} we have
$$(z,ab)^{d+1}_n=\bigl( (z,ab)^{2}_n \bigr)^m=(z,ab)^{m}_d=\eta'^{-1}_{ab}(z).$$
Using similar arguments one shows that $$(b,-1)_2(a,b)_2(-a,b)_n=\eta'_{a}(b).$$
As in the odd case, $\eta'_{a}(b)=1$ for all $a,b \in \overline{K}$. The theorem for the $n \equiv 2 \, (\operatorname{mod }4)$ case now follows from Theorem \ref{partial metagamma lemma}.
\end{proof}
\begin{cor} Let $\sigma$ be a genuine smooth irreducible  representation of  $\widetilde{\rm{H}}$. Then, the entries of an Slcm associated $\sigma$ and $\psi$  lies in the space of rational functions in $q^{-s}$. Moreover, let $\chi$ be a character of ${\rm F}^*$ such that $\chi''_\psi=\chi_\sigma$. If $n$ is odd then the space spanned by the entries  of an Slcm associated with $\sigma$ and $\psi$ is
$$\operatorname{span} \{ s\mapsto \gamma (1-s,\chi^{-1}\eta,\psi) \mid \, \eta \in\widehat{ {\rm F}^*/{{\rm F}^*}^d} \}$$
and if  $n\equiv 2 \, (\operatorname{mod }4)$ then the space spanned by the entries  of an Slcm associated with $\sigma$ and $\psi$ is
$$\operatorname{span} \{ s\mapsto \widetilde{\gamma}  (1-s,\chi^{-1}\eta,\psi) \mid \, \eta \in\widehat{ {\rm F}^*/{{\rm F}^*}^d} \}.$$
\end{cor}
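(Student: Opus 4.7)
The plan is to deduce the corollary directly from Theorem \ref{metasha} together with two finite-group arithmetic facts: first, that squaring is a bijection on $\overline{K}$, and second, that Fourier inversion on $\overline{J}$ inverts the partial $\gamma$-factor formula.

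First, by Theorem \ref{metasha} every entry of an Slcm is of the form $\gamma_{_J}(1-s,\chi^{-1}\eta_{ab},\psi,ab^{-1})$ (respectively $\widetilde{\gamma}_{_J}(1-s,\chi^{-1}\eta'_{ab},\psi,ab^{-1})$). From the definitions \eqref{part gamma def} and \eqref{part meta gamma def}, each such quantity is a finite complex linear combination of ordinary $\gamma$-factors (resp.\ $\widetilde{\gamma}$-factors) with argument $\chi^{-1}\eta_{ab}\eta_j$ (resp.\ $\chi^{-1}\eta'_{ab}\eta_j$) as $j$ ranges over $\overline{J}$. Since each such $\gamma$-factor (resp.\ $\widetilde{\gamma}$-factor) is a rational function in $q^{-s}$, the first assertion follows, and we obtain the inclusion of the span of the entries into the corresponding span on the right hand side once we verify that the characters that arise exhaust $\widehat{{\rm F}^*/{{\rm F}^*}^d}$.

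The key arithmetic observation is that since we assume $n\not\equiv 0\,(\operatorname{mod}4)$, the order $\#\overline{K}=d\ab d\ab^{-\half}$ is odd. Consequently the map $x\mapsto x^2$ is a bijection of $\overline{K}$, and so the map $(a,b)\mapsto (ab,ab^{-1})$ from $\overline{K}\times \overline{K}$ to itself is a bijection (given a target pair $(x,y)$, there is a unique $a\in\overline{K}$ with $a^2=xy$, and then $b=a^{-1}x$ is determined). Hence as $(a,b)$ ranges over $\overline{K}\times\overline{K}$ the entries of the Slcm are precisely $\{\gamma_{_J}(1-s,\chi^{-1}\eta_{x},\psi,y):x,y\in\overline{K}\}$ in the odd case, and the analogous set with $\widetilde{\gamma}_{_J}$ and $\eta'_{x}$ in the even case.

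For the reverse inclusion I would apply Fourier inversion on the finite group $\overline{J}$ to the defining equation \eqref{part gamma def}: for each $j\in\overline{J}$ and $x\in\overline{K}$,
$$\gamma(1-s,\chi^{-1}\eta_{x}\eta_j,\psi)=\sum_{k\in\overline{K}}\eta_j(k)\,\gamma_{_J}(1-s,\chi^{-1}\eta_{x},\psi,k),$$
and similarly for the metaplectic case. The right hand side is manifestly in the span of the entries of the Slcm. It remains to check that as $(x,j)$ ranges over $\overline{K}\times\overline{J}$, the character $\eta_{x}\eta_j$ exhausts $\widehat{{\rm F}^*/{{\rm F}^*}^d}$; this follows from the Lagrangian decomposition, since $\eta_{x}|_{\overline{J}}$ ranges over $\widehat{\overline{J}}$ and $\eta_j|_{\overline{K}}$ ranges over $\widehat{\overline{K}}$, and a character on $\overline{J}\times\overline{K}={\rm F}^*/{{\rm F}^*}^d$ is determined by its restrictions to the two factors. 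In the $n\equiv 2\,(\operatorname{mod}4)$ case, since $x\mapsto\eta'_x=\eta_x^m$ is an automorphism of $\widehat{{\rm F}^*/{{\rm F}^*}^d}$ by \eqref{new eta}, the same bijection statement holds with $\eta'_x$ in place of $\eta_x$, so the argument carries over verbatim after replacing $\gamma$ by $\widetilde{\gamma}$.

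The main conceptual step is the parity observation that forces $(ab,ab^{-1})$ to cover all of $\overline{K}\times\overline{K}$; without it, only the diagonal subset where $xy$ is a square would be hit, and one would not obtain all the gamma factors. This is precisely where the hypothesis $4\nmid n$ enters. The remaining steps are essentially the formal inversion of a finite Fourier transform and bookkeeping with the Lagrangian decomposition.
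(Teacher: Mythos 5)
Your proof is correct and follows essentially the same route as the paper: both reduce to the span of $\tau_{_L}(a,b,\chi,s,\psi)$ via Proposition~\ref{withgao}, use oddness of $\#\overline{K}$ to see that $(a,b)\mapsto(ab,ab^{-1})$ is a bijection of $\overline{K}\times\overline{K}$, and then pass between the partial $\gamma$-factors and the ordinary ones via \eqref{part gamma def} and its Fourier inversion over $\overline{J}$. The only cosmetic difference is that you run the bijection forward while the paper parametrizes via the square-root automorphism $a\mapsto\sqrt{a}$; the content is identical.
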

\begin{proof} Due to Proposition \ref{withgao}  it is sufficient to prove  that
\begin{eqnarray} \nonumber && \operatorname{span} \{ s \mapsto \tau(a,b,\chi,s,\psi) \mid \, a,b \in \overline{K} \} \\ \nonumber
&&= \begin{cases} \operatorname{span} \{ s\mapsto \gamma (1-s,\chi^{-1}\eta_x,\psi) \mid \, x \in {\rm F}^*/{{\rm F}^*}^d \} &  n \, \operatorname{is} \,  \operatorname{odd }; \\ \\ \operatorname{span} \{ s\mapsto \widetilde{\gamma} (1-s,\chi^{-1}\eta_x,\psi) \mid \, x \in {\rm F}^*/{{\rm F}^*}^d \}&  n\equiv 2 \, (\operatorname{mod }4). \end{cases} \end{eqnarray}
Indeed, since the order of $\overline{K}$ is odd, the map $a \mapsto a^2$ is an automorphism of $\overline{K}$. We denote by $a\mapsto \sqrt{a}$ its inverse. From Theorem \ref{metasha} it follows that
$$\tau_{_L}( \sqrt{ab},\sqrt{ab^{-1}},\chi_{},s,\psi)=\begin{cases} \gamma_{_J}(1-s, \chi^{-1}\eta_{a},\psi,b )  &  n \, \operatorname{is} \,  \operatorname{odd }; \\ \\  \widetilde{\gamma}_{_J}(1-s, \chi^{-1}\eta_{a},\psi,b )&  n\equiv 2 \, (\operatorname{mod }4). \end{cases}$$
The assertion now follows from \eqref{part gamma def} and \eqref{part meta  gamma def} along with their inversions.
\end{proof}
\begin{cor} \label{more inv}  Let $\sigma$ be a genuine smooth irreducible  representation of  $\widetilde{\rm{H}}$ with a central character  $\chi_\sigma$. For $Re(s) \gg 0$ we have
$$T(\sigma,s,\psi)=  d \ab d \ab^{-\half} \lim_{r \rightarrow \infty} \int_{\Pf^{-r} \cap {{\rm F}^*}^d}
\chi_{\sigma_s}\bigr(h(x),1 \bigl) \psi(x) \, d_\psi^* x .$$
Let $\chi$ be a character of ${\rm F}^*$ such that $\chi''_\psi=\chi_\sigma$. Then,
\begin{equation} \label{traceformula}T(\sigma,s,\psi)=d^{-1} \ab d \ab^{\half}\begin{cases} \sum_{\eta \in \widehat{{\rm F}^*/{{\rm F}^*}^d}} \gamma(1-s,\chi^{-1}\eta,\psi)  &  n \, \operatorname{is} \,  \operatorname{odd }; \\   \sum_{\eta \in \widehat{{\rm F}^*/{{\rm F}^*}^d}} \widetilde{\gamma}(1-s,\chi^{-1}\eta,\psi)&  n\equiv 2 \, (\operatorname{mod }4). \end{cases} \end{equation}
\end{cor}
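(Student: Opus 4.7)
The plan is to compute the trace directly from the explicit formula for the diagonal entries provided by Theorem~\ref{metasha}, then to perform two parallel manipulations: one yielding the sum of $\gamma$-factors in \eqref{traceformula}, the other the integral representation.

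Specializing Theorem~\ref{metasha} to $a=b\in\overline{K}$ gives
$$\tau_{_L}(a,a,\chi,s,\psi)=\begin{cases}\gamma_{_J}(1-s,\chi^{-1}\eta_{a^2},\psi,1), & n\text{ odd,}\\ \widetilde{\gamma}_{_J}(1-s,\chi^{-1}\eta'_{a^2},\psi,1), & n\equiv 2\,(\operatorname{mod}4),\end{cases}$$
so that $T(\sigma,s,\psi)=\sum_{a\in\overline{K}}\tau_{_L}(a,a,\chi,s,\psi)$. For \eqref{traceformula}, I would expand each diagonal term via \eqref{part gamma def} (respectively \eqref{part meta gamma def}) and obtain a double sum over $\overline{K}\times\overline{J}$ whose summand is $\gamma(1-s,\chi^{-1}\eta_{a^{\epsilon}j},\psi)$ or $\widetilde{\gamma}(1-s,\chi^{-1}\eta_{a^{\epsilon}j},\psi)$, with $\epsilon=2$ for odd $n$ and $\epsilon=2m$, $m=\tfrac{d+1}{2}$, for $n\equiv 2\,(\operatorname{mod}4)$ (exploiting $\eta'_{a^2}\eta_j=\eta_{a^{2m}j}$ and the multiplicativity $\eta_{xy}=\eta_x\eta_y$). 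Since $d$ is odd, $a\mapsto a^{\epsilon}$ is an automorphism of $\overline{K}$, and because $\overline{K}\overline{J}={\rm F}^*/{{\rm F}^*}^d$ is a direct product, $(a,j)\mapsto a^{\epsilon}j$ is a bijection onto ${\rm F}^*/{{\rm F}^*}^d$. Combined with $\#\overline{J}=d\,|d|^{-1/2}$ and the identification $\{\eta_y\}_{y\in{\rm F}^*/{{\rm F}^*}^d}=\widehat{{\rm F}^*/{{\rm F}^*}^d}$, this produces precisely \eqref{traceformula}.

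For the first (integral) formula, I would instead invoke the integral representations in Theorems~\ref{partial gamma lemma} and~\ref{partial metagamma lemma}. Writing $\rho_n(x)=1$ for odd $n$ and $\rho_n(x)=\gamma_\psi(x)^{-1}$ for $n\equiv 2\,(\operatorname{mod}4)$, these identities give, for $\operatorname{Re}(s)\gg 0$,
$$\tau_{_L}(a,a,\chi,s,\psi)=\lim_{r\to\infty}\int_{\Pf^{-r}\cap J}\chi_s(x)\,\eta_{a^{-\epsilon}}(x)\,\rho_n(x)\,\psi(x)\,d_\psi^*x.$$
Absolute convergence in that half plane lets me interchange the finite sum $\sum_{a\in\overline{K}}$ with the limit and the integral. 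The inner character sum $\sum_{a\in\overline{K}}\eta_{a^{-\epsilon}}(x)$ reindexes, by bijectivity of $a\mapsto a^{-\epsilon}$ on $\overline{K}$, to $\sum_{a\in\overline{K}}\eta_a(x)$; by the perfect pairing $\overline{K}\times\overline{J}\to\mu_d$ of Remark~\ref{nicedid} this equals $\#\overline{K}$ when $x\in{{\rm F}^*}^d$ and vanishes elsewhere on $J$. The integration domain therefore collapses to $\Pf^{-r}\cap{{\rm F}^*}^d$, and on that set $\chi_s(x)\rho_n(x)=\chi_{\sigma_s}\bigl(h(x),1\bigr)$ by the definition of $\chi_\psi$ in Section~\ref{modelsection}. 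Using $\#\overline{K}=d\,|d|^{-1/2}$ then produces the claimed integral formula.

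The main technical subtlety is the parity-dependent bookkeeping in the $n\equiv 2\,(\operatorname{mod}4)$ case: the auxiliary automorphism $\eta\mapsto\eta'$ of $\widehat{{\rm F}^*/{{\rm F}^*}^d}$ and the Weil-index twist $\rho_n$ must be tracked simultaneously, so that the orthogonality on $\overline{K}\times\overline{J}$ still applies and so that the integrand regenerates exactly $\chi_{\sigma_s}(h(\cdot),1)\psi$. Once this alignment is in place, what remains is a routine interchange of finite sums with absolutely convergent integrals, and no analytic input beyond the partial $\gamma$ and $\widetilde{\gamma}$ machinery already developed is required.
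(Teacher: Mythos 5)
Your derivation of \eqref{traceformula} mirrors the paper's: take the diagonal entries from Theorem~\ref{metasha}, expand each partial $\gamma$-factor (or partial $\widetilde{\gamma}$-factor) via \eqref{part gamma def} (or \eqref{part meta  gamma def}) into a sum over $\overline{J}$, and reindex the resulting double sum over $\overline{K}\times\overline{J}$ by the bijection $(a,j)\mapsto a^{\epsilon}j$ onto ${\rm F}^*/{{\rm F}^*}^d$. That part is essentially identical in substance.

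For the integral formula you take a genuinely different, and somewhat more direct, route. The paper first establishes \eqref{traceformula} and then rewrites each full Tate (resp.\ metaplectic) $\gamma$-factor as the principal-value integral \eqref{tateintegral} over $\Pf^{-r}$, collapsing to $\Pf^{-r}\cap{{\rm F}^*}^d$ by orthogonality of $\{\eta_a\}_{a\in{\rm F}^*/{{\rm F}^*}^d}$. You instead bypass \eqref{traceformula}: you invoke the second assertion of Theorem~\ref{partial gamma lemma} (resp.\ Theorem~\ref{partial metagamma lemma}) on each diagonal entry, representing $\tau_{_L}(a,a,\chi,s,\psi)$ as a principal-value integral over $\Pf^{-r}\cap J$, and then collapse to ${{\rm F}^*}^d$ via orthogonality of $\{\eta_a\}_{a\in\overline{K}}$ restricted to $J$ through the Lagrangian pairing of Remark~\ref{nicedid}. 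Both routes are correct; yours exploits the finer integral representations of the partial factors that the paper has already proved, presents the two assertions of the corollary as parallel rather than sequential computations, and spells out the $n\equiv 2\,(\operatorname{mod}4)$ bookkeeping ($\epsilon=2m$, the Weil-index factor $\rho_n$) that the paper dismisses as ``similar.'' One step you lean on implicitly and should state: reindexing by $a\mapsto a^{\pm\epsilon}$ on $\overline{K}$ requires $\gcd(\epsilon,\exp\overline{K})=1$; this holds because $\exp\overline{K}$ divides $d$, $d$ is odd, and $\gcd(m,d)=1$, but the appeal to ``$d$ odd'' alone does not quite cover the factor $m$.
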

\begin{proof} We give a proof only for the odd case. The proof for $n\equiv 2 \, (\operatorname{mod }4)$ is similar.
 \begin{eqnarray} \nonumber T(\sigma,s,\psi) &=& \sum_{a \in \overline{K}} \tau(a,a,\chi,s,\psi)=\sum_{a \in \overline{K}} \gamma_{_J} (1-s,\chi^{-1}\eta_{a^2},\psi,1 ) \\ \nonumber &=& \sum_{k \in \overline{K}} \gamma_{_J} (1-s,\chi^{-1}\eta_{k},\psi,1 )=  d \ab d \ab^{-\half} \sum_{k \in \overline{K}} \sum_{j \in \overline{J}}\gamma (1-s,\chi^{-1}\eta_{gk},\psi ) \\ \nonumber &=& d \ab d \ab^{-\half} \sum_{\eta \in \widehat{{\rm F}^*/{{\rm F}^*}^d}} \gamma(1-s,\chi^{-1}\eta,\psi)
 .\end{eqnarray}
To obtain the integral representation note that for $Re(s) \gg 0$
\begin{eqnarray} \nonumber \sum_{a \in {{\rm F}^*/{{\rm F}^*}^d}} \gamma (1-s,(\chi\eta_a)^{-1},\psi \bigr) &=& \sum_{a \in {{\rm F}^*/{{\rm F}^*}^d}}\lim_{r \rightarrow \infty} \int_{\Pf^{-r}} \chi_{s}(x)\eta_a(x) \psi(x) \, d_\psi^* x \\ \nonumber &=&  \lim_{r \rightarrow \infty} \int_{\Pf^{-r}} \Bigl(\sum_{a \in {{\rm F}^*/{{\rm F}^*}^d}} \eta_a(x) \Bigr)   \chi_{s}(x) \psi(x) \, d_\psi^* x. \end{eqnarray}
We finish by using the fact that for $x \in {\rm F}^*$,
$$\sum_{a \in {{\rm F}^*/{{\rm F}^*}^d}}\eta_a(x)= \begin{cases} [{\rm F}^*:{{\rm F}^*}^d]  &  x \in {{\rm F}^*}^d; \\ 0  &   \operatorname{otherwise}. \end{cases}$$
\end{proof}
\subsection{Computations in the case where ${\rm gcd}(n,p)=1$} \label{uram examples}
In this section only we assume that $n>1$ is relatively prime to the residual characteristic of ${\rm F}^*$, that $\psi$ is normalized and that $L$ is the Lagrangian decomposition of ${\rm F}^*/{{\rm F}^*}^d$ given in Example \ref{kp example}. In particular, $\overline{J}$ and $\overline{K}$  are cyclic groups of order $d$.  To demonstrate the effectiveness of our  construction we give in this section the computations of $\gamma_{_J}(s,\chi,\psi,a )$, $\widetilde{\gamma}_{_J}(s,\chi,\psi,a )$ and $\tau_{_L}(a,b,\chi,s,\psi)$. These computations are not needed for Section \ref{planme}. Recall that in the case where ${\rm gcd}(n,p)=1$, $\mslt$ splits uniquely over ${\rm SL}_2(\Of)$, see \cite{Kub}. Thus, if $\chi^n$ is unramified, $\tau_{_L}(a,b,\chi,s,\psi)$  is associated with an unramified genuine smooth irreducible  representation of  $\widetilde{\rm{H}}$. This unramified computation was already utilized in \cite{GSS}. It should also be of interest for  global applications. The relation between the unramifed Slcms computed here and the matrices computed in \cite{Mc2} for $\mslt$ is discussed in Remark \ref{ichange} above.  In the odd case, similar computations can be used  for the scattering matrix in \cite{KP} for both odd and even fold covers of ${\rm GL}_n({\rm F})$. Namely,  $\widetilde{\gamma}_{_J}(s,\chi,\psi,a \bigr)$ does not appear in the formulas for these covering groups. We note here that under the assumptions above, objects closely related to $\gamma_{_J}(s,\chi,\psi,a )$, $\widetilde{\gamma}_{_J}(s,\chi,\psi,a )$ and $\tau_{_L}(a,b,\chi,s,\psi)$ were computed in $\cite{GoSz}$ for all $n$. We start by collecting some information arising from the fact that ${\rm gcd}(n,p)=1$.
\begin{lem} \label{gcd1 stuff} Let $\chi$ be a ramified character of ${\rm F}^*$. The following hold.
\begin{enumerate}
  \item \label{1+pf in fn} If ${\rm gcd}(p,n)=1$ then $1+\Pf \subset {{\rm F}^*}^{n}$.
  \item \label{cond1} If ${\rm gcd}(p,n)=1$ and the restriction of $\chi$ to  ${{\rm F}^*}^{n}$ is trivial then $e(\chi)=1$.
   \item \label{weiltrival} If $p$ is odd and $\psi$ is normalized then $\gamma_F(\psi)=1$.
  \item \label{nocondchange} If ${\rm gcd}(p,n)=1$ and $\chi^n$ is ramified then $e(\chi)=e(\chi\eta)$ for any character $\eta$ which is trivial on ${{\rm F}^*}^{n}$.
   \item \label{chichi2} If ${\rm gcd}(n,p)=1$ and $\chi^n$ is ramified then $e(\chi)=e(\chi^n)$.  In particular, if $p$ is odd and $\chi^2$ is ramified then $e(\chi)=e(\chi^2)$.
\end{enumerate}
\end{lem}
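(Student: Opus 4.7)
I would tackle these in the order (1)-(2)-(4)-(5)-(3), since (2) follows immediately from (1) and (4)-(5) share a common pro-$p$ argument.

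For (1), apply Hensel's lemma to $f(X)=X^n-u$ at $X=1$ for any $u\in 1+\Pf$: since $\gcd(n,p)=1$, $f'(1)=n\in\Of^*$, yielding an $n$-th root of $u$ in $1+\Pf$, whence $1+\Pf\subseteq(1+\Pf)^n\subseteq {{\rm F}^*}^n$. Part (2) is then immediate, since $\chi|_{{{\rm F}^*}^n}=1$ forces $\chi|_{1+\Pf}=1$, so $e(\chi)\leq 1$; ramifiedness then gives equality.

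For (4), I would first note that any $\eta$ killing ${{\rm F}^*}^n$ factors through the finite group ${\rm F}^*/{{\rm F}^*}^n$ of exponent dividing $n$, so $\eta^n=1$ and consequently $(\chi\eta)^n=\chi^n$ is ramified. Both $\eta$ and $\eta^{-1}$ are then trivial on $1+\Pf$ by (2), and comparing the kernels of $\chi$ and $\chi\eta$ on $1+\Pf^{e(\chi)}\subseteq 1+\Pf$ yields $e(\chi\eta)\leq e(\chi)$; swapping the roles of $\chi$ and $\chi\eta$ gives the reverse inequality. For (5), $e(\chi^n)\leq e(\chi)$ is trivial. For the reverse, set $k=e(\chi^n)$ and restrict $\chi$ to $U^{(k)}=1+\Pf^k$; since $\chi^n|_{U^{(k)}}=1$, the image lies in $\mu_n$. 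But $U^{(k)}$ is pro-$p$ (each successive quotient $U^{(j)}/U^{(j+1)}\cong \Pf^j/\Pf^{j+1}$ is a finite $p$-group) whereas $\mu_n$ has order coprime to $p$, forcing $\chi|_{U^{(k)}}=1$ and hence $e(\chi)\leq k$. The case $e(\chi)=e(\chi^2)$ for odd $p$ is the instance $n=2$.

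The only part requiring genuine analysis is (3). Since $p$ is odd, $\psi_{_2}$ is again normalized, so $d_{\psi_{_2}}=d_\psi$ assigns $\Of$ volume $1$. I would split the defining integral as $\int_\Of \psi(x^2)\,d_\psi x+\sum_{j=1}^{r}\int_{\Pf^{-j}\setminus \Pf^{-j+1}}\psi(x^2)\,d_\psi x$. The first piece equals $1$ because $\psi|_\Of=1$; the main obstacle is showing that each annular piece vanishes. Writing $x=\varpi^{-j}u$ with $u\in \Of^*$ and $j\geq 1$, this reduces to a classical odd-residual-characteristic quadratic Gauss-sum integral over $\Of^*$, where a non-trivial additive character picked up on a quotient of the form $1+\Pf^{j}/1+\Pf^{j'}$ supplies the cancellation. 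Alternatively the result is contained in the references already cited in Section \ref{weilindexdef}.
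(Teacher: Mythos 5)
Your proof is correct and follows essentially the same route as the paper's: parts (4) and (5) rest on the same two observations---that $\eta$ is trivial on $1+\Pf$ while $(\chi\eta)^n=\chi^n$ remains ramified, and that a homomorphism from a pro-$p$ group (or, as the paper phrases it, from the finite $p$-group quotient $1+\Pf^{e(\chi^n)}/1+\Pf^{e(\chi)}$) to $\mu_n$ is trivial---and part (2) is deduced from (1) in both. The only substantive difference is that the paper disposes of (1) and (3) by citing references (Lang, p.~43 for (1); Corollary~3.2 of \cite{CC} for (3)), whereas you supply the standard Hensel's-lemma argument for (1) and sketch the classical Gauss-sum cancellation for (3); both fill-ins are correct.
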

\begin{proof}
The first assertion  is well known, see for example Page 43 of \cite{Lang}. The second assertion follows at once from the first assertion. The third assertion is also well known, see Corollary 3.2 in \cite{CC} for example.

We prove the fourth assertion. Since by the second assertion $e(\eta)\leq 1$, this assertion is trivial in the case where  $e(\chi)>1$. Suppose now that $e(\chi)=1$. In this case
it is sufficient to show that $(\chi \eta)^n$ is ramified. This follows from the fact that the restrictions of $\chi^n$ and $(\chi \eta)^n$ to $\Of^*$ are equal along with the assumption that $\chi^n$ is ramified.

We prove the fifth assertion. Since $\chi^n$ is ramified, $\chi$ restricts and descends to
$$\widetilde{\chi}:1+\Pf^{e(\chi^n)} / 1+\Pf^{e(\chi)} \rightarrow \mu_n.$$
Since $1+\Pf^{e(\chi^n)}/ 1+\Pf^{e(\chi)}$ is a $p$-group and ${\rm gcd}(p,n)=1$ we conclude that $\widetilde{\chi}$ is trivial. Thus, $e(\chi^n)=e(\chi)$.
\end{proof}
Let $u$ and $k$ be primitive elements in $\overline{J}$ and $\overline{K}$ respectively. We shall assume that $k$ is the image of $\varpi$ in ${\rm F}^*/{{\rm F}^*}^d$. Observe that $\eta_u$ is unramified.
Let $\xi$ be the $d^{th}$ root of 1 defined by
$$\xi= \eta_u(\varpi).$$ 
In the proof of Propositions \ref{un ram gama} and \ref{meta un ram gama}  below we shall use the elementary  identities
\begin{equation} \label{elemntry} \prod_{m=0}^{d-1}(1-x \xi^{-m})=1-x^d, \, \, \, \, \prod_{\substack  {0 \leq m \leq d-1 \\ m\neq l}} \! \! \!  \bigl(1-x\xi^{-m}\bigr)=\sum_{m=0}^{d-1} x^m \xi^{-lm} \end{equation}
These identities follow from the fact that $\xi$ is a primitive element of $\mu_d$. In the proof of Propositions \ref{un ram gama} and \ref{meta un ram gama}  below we shall also use the following notation. For $a,b \in \Z$ set
$$\delta_{a,b}= \begin{cases} 1 & a\equiv b \, (\operatorname{mod }d); \\ 0  &   a \not \equiv b \, (\operatorname{mod }d).  \end{cases}$$
\begin{prop} \label{un ram gama}Suppose that $n$ is odd and that  ${\rm gcd}(p,n)=1$. Let $\psi$ be a normalized character of ${\rm F}$ and let $L$ be the Lagrangian decomposition of ${\rm F}^*/{{\rm F}^*}^d$ given in Example \ref{kp example}.

If $\chi$ is ramified then
\begin{equation} \label{ram gama} \gamma_{_J}(1-s,\chi^{-1},\psi, k^{t})=\begin{cases} \epsilon(1-s,\chi^{-1},\psi)  & t\equiv e(\chi) \, (\operatorname{mod }n) ; \\ \\ 0  & otherwise .\end{cases} \end{equation}

If $\chi$ is unramified then
\begin{equation} \label{ram gama un}\gamma_{_J}(1-s,\chi^{-1},\psi,k^{-t} )= \bigl(q^{-s}\chi(\varpi)\bigr)^t \begin{cases} (1-q^{-1}) L(ns,\chi^n)  & 0 \leq t \leq n-2 ; \\  \gamma(1-ns,\chi^{-n},\psi)& t=n-1.  \end{cases} \end{equation}
\end{prop}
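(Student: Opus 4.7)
The plan is to unpack the definition of $\gamma_{_J}$ directly and exploit the tame structure available when $\gcd(n,p)=1$. Parametrize $\overline{J} = \langle u \rangle$ with $u \in \Of^*\F^{*d}/\F^{*d}$ so $\eta_u$ is pulled back from an unramified character, $\eta_u(\varpi) = \xi$ with $\xi$ a primitive $d$-th root of unity (and $d = n$ since $n$ is odd), and $\eta_u$ is trivial on $\Of^*$ by the tame symbol formula. Then
\[
\gamma_{_J}(1-s,\chi^{-1},\psi,k^{t}) = n^{-1}\sum_{m=0}^{n-1} \gamma(1-s,\chi^{-1}\eta_u^m,\psi)\,\xi^{-mt},
\]
and the two cases will be handled by evaluating this finite sum.

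For the ramified case, since $\eta_u^m$ is unramified and trivial on $\Of^*$, the restriction of $\chi\eta_u^m$ to $\Of^*$ equals that of $\chi$, so $e(\chi\eta_u^m) = e(\chi)$ and $\chi^{-1}\eta_u^m$ is ramified. Hence the $L$-factors in the numerator and denominator of $\gamma$ are both $1$, so $\gamma(1-s,\chi^{-1}\eta_u^m,\psi)=\epsilon(1-s,\chi^{-1}\eta_u^m,\psi)$. Apply \eqref{epsilon twist} with $\eta = \eta_u^m$ (and $e(\psi)=0$) to factor out $\xi^{me(\chi)}\epsilon(1-s,\chi^{-1},\psi)$, and use orthogonality of characters of $\mu_n$ to collapse $n^{-1}\sum_m \xi^{m(e(\chi)-t)}$ to $\delta_{t,e(\chi) \bmod n}$. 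This yields \eqref{ram gama}.

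For the unramified case, $\chi\eta_u^m$ is unramified, so $\epsilon(1-s,\chi^{-1}\eta_u^m,\psi) = 1$ by normality of $\psi$, and
\[
\gamma(1-s,\chi^{-1}\eta_u^m,\psi) = \frac{1-b\xi^m}{1-a\xi^{-m}},\qquad a = q^{-s}\chi(\varpi),\ \ b = q^{-1}a^{-1}.
\]
Clear the denominators using $\prod_m(1-a\xi^{-m})=1-a^n$ from the first identity in \eqref{elemntry}, and replace $\prod_{l\neq m}(1-a\xi^{-l})$ by $\sum_{j=0}^{n-1}a^j\xi^{-mj}$ via the second identity. Exchanging the $j$-sum and $m$-sum and invoking $\sum_m \xi^{(t-j)m}=n\delta_{t,j}$ (mod $n$), the double sum collapses to contributions from $j = t$ and $j = t+1 \bmod n$ only:
\[
n(1-a^n)S_t \;=\; a^t - b a^{t+1 \bmod n}.
\]
When $0 \le t \le n-2$, $t+1$ does not wrap; using $ab = q^{-1}$ the right side becomes $a^t(1-q^{-1})$, giving the first branch of \eqref{ram gama un} after recognizing $(1-a^n)^{-1} = L(ns,\chi^n)$. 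When $t = n-1$, one has $t+1 \equiv 0 \pmod n$, so $S_{n-1}(1-a^n) = a^{n-1}-b = a^{n-1}(1-q^{-1}a^{-n})$; the factor $(1-q^{-1}a^{-n})/(1-a^n)$ is exactly $\gamma(1-ns,\chi^{-n},\psi)$, again by the unramified evaluation of $\gamma$, which produces the second branch.

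The main technical obstacle is the unramified bookkeeping: one must be careful about the wrap-around $t+1 \bmod n$ (which produces two distinct formulas depending on whether $t = n-1$ or not), and one must recognize the resulting rational functions in $q^{-s}$ as $(1-q^{-1})L(ns,\chi^n)$ and $\gamma(1-ns,\chi^{-n},\psi)$. All other ingredients — the reduction of conductor formulas via Lemma~\ref{gcd1 stuff}, the orthogonality of characters of $\mu_n$, and the cyclotomic identities \eqref{elemntry} — are routine.
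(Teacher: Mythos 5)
Your proof is correct and follows essentially the same route as the paper's: expand $\gamma_J$ via \eqref{part gamma def}, use the tame structure ($\eta_u$ unramified, $e(\chi\eta_u^m)=e(\chi)$) to reduce to $\epsilon$-factors or explicit $L$-quotients, then clear denominators with \eqref{elemntry} and collapse by orthogonality. The only differences are cosmetic: you collapse the double sum directly by noting that only $j=t$ and $j=t+1\bmod n$ survive, where the paper writes out two separate $\delta$-sums and performs an index shift $m=\widehat m-1$; this is marginally tidier bookkeeping but not a different argument.

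One small slip: your displayed identity $n(1-a^n)S_t = a^t - b\,a^{(t+1)\bmod n}$ carries a spurious factor of $n$ on the left. Taking $S_t=\gamma_J(1-s,\chi^{-1},\psi,k^{-t})=n^{-1}\sum_m\gamma(1-s,\chi^{-1}\eta_u^m,\psi)\xi^{mt}$ and clearing the common denominator $1-a^n$ produces $n^{-1}\sum_{m,j}\bigl(a^j\xi^{m(t-j)} - b a^j\xi^{m(t+1-j)}\bigr)$; the $m$-sums give $n$ at $j\equiv t$ and $j\equiv t+1$, which is then killed by the external $n^{-1}$, so the correct identity is $(1-a^n)S_t = a^t - b\,a^{(t+1)\bmod n}$. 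Since your subsequent manipulations (using $ab=q^{-1}$ and identifying $(1-a^n)^{-1}=L(ns,\chi^n)$, resp.\ identifying $(1-q^{-1}a^{-n})/(1-a^n)=\gamma(1-ns,\chi^{-n},\psi)$) land on exactly the stated formulas, the extra $n$ is evidently a typo rather than a substantive error.
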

\begin{proof}
By \eqref{part gamma def}
\begin{eqnarray}\label{beforecommon}
\gamma_{_J}(1-s,\chi^{-1},\psi,k^{t} )&=&
\frac 1 n \sum_{j \in \overline{J}} \gamma(1-s,\chi^{-1}\eta_j,\psi)\eta_j^{-1}\bigl(k^{t}\bigr) \\ \nonumber &=& \frac 1 n \sum_{l=0}^{n-1} \ \gamma(1-s,\chi^{-1}\eta_u^l,\psi)\eta_u^{-lt}(k)= \frac 1 n \sum_{l=0}^{n-1} \ \gamma(1-s,\chi^{-1}\eta_u^l,\psi)\xi^{-tl}. \end{eqnarray}
Suppose first that $\chi$ is ramified. In this case $$\gamma(1-s,\chi^{-1}\eta_u^l,\psi)=\epsilon(1-s,\chi^{-1}\eta_u^l,\psi).$$ From \eqref{epsilon twist} it follows that
$$\gamma_{_J}(1-s,\chi^{-1},\psi,k^t )=\epsilon(1-s,\chi^{-1},\psi) \frac 1 n \sum_{l=0}^{n-1} \xi^{l(e(\chi)-t)}=\epsilon(1-s,\chi^{-1},\psi)\delta_{t,e(\chi)}.$$
The ramified case is done.

We move to the case where $\chi$ is unramified. In this case
$$\gamma(1-s,\chi^{-1}\eta_u^l,\psi)= \frac { L(s,\chi\eta_u^{-l})}{L(1-s,\chi^{-1}\eta_u^l)} =\frac{1-q^{s-1}\chi^{-1}(\varpi)\xi^l}  {1-q^{-s}\chi(\varpi)\xi^{-l}}.$$
We take a common denominator in the right hand side of \eqref{beforecommon} and then use  \eqref{elemntry}.
\begin{eqnarray} \nonumber &\gamma_{_J}& \! \! \! \! \! \!(1-s,\chi^{-1},\psi,k^{-t}) \\ \nonumber &=& \frac 1 n \prod_{l=0}^{n-1} \! \! (1-q^{-s}\chi(\varpi) \xi^{-l})^{-1} \Bigl( \sum_{l=0}^{n-1} (1-q^{s-1}\chi^{-1}(\varpi)\xi^{l})\xi^{tl}  \! \! \! \prod_{\substack  {0 \leq m \leq n-1 \\ m\neq l}}(1-q^{-s}\chi(\varpi)\xi^{-m}) \Bigr)
\\ \nonumber &=& L(ns,\chi^n)  \frac 1 n  \sum_{l=0}^{n-1} (1-q^{s-1}\chi^{-1}(\varpi)\xi^{l})\xi^{tl}  \Bigl( \sum_{m=0}^{n-1} \bigl(q^{-s}\chi(\varpi)\bigr)^{-m} \xi^{-ml}\Bigr)
 \\ \nonumber &=& L(ns,\chi^n)  \frac 1 n  \sum_{l=0}^{n-1} \sum_{m=0}^{n-1} \bigl(q^{-s}\chi(\varpi)\bigr)^m \xi^{l(t-m)}-q^{-1}\bigl(q^{-s}\chi(\varpi)\bigr)^{m-1}\xi^{l(t+1-m)}.\end{eqnarray}
We now change the order of summation.
\begin{eqnarray} \nonumber &\gamma_{_J}& \! \! \! \! \! \!(1-s,\chi^{-1},\psi,k^{-t}) L(ns,\chi^n)^{-1} \\ \nonumber &=&  \sum_{m=0}^{n-1} \Bigl(\frac 1 n\sum_{l=0}^{n-1} \xi^{l(t-m)}\Bigr)\bigl(q^{-s}\chi(\varpi)\bigr)^m-q^{-1}  \sum_{\widehat{m}=0}^{n-1}\Bigl(\frac 1 n\sum_{l=0}^{n-1} \xi^{l(t-(\widehat{m}-1))}\Bigr)\bigl(q^{-s}\chi(\varpi)\bigr)^{\widehat{m}-1} \\ \nonumber &=&
\sum_{m=0}^{n-1}\bigl(q^{-s}\chi(\varpi)\bigr)^m\delta_{t,m}-q^{-1}  \sum_{\widehat{m}=0}^{n-1}\bigl(q^{-s}\chi(\varpi)\bigr)^{\widehat{m}-1}\delta_{t,\widehat{m}-1}.\end{eqnarray}
Finally, we change the summation index $m=\widehat{m}-1$ in the right summation. This gives
\begin{eqnarray} \nonumber & \gamma_{_J}& \! \! \! \! \! \!\bigl(1-s,\chi^{-1},\psi,k^{-t} \bigr) L\bigl(ns,\chi^n\bigr)^{-1} \\ &=&\sum_{m=0}^{n-1}\bigl(q^{-s}\chi(\varpi)\bigr)^m\delta_{t,m}-q^{-1}  \sum_{{m}=-1}^{n-2}\bigl(q^{-s}\chi(\varpi)\bigr)^m\delta_{t,m}
\\ \nonumber  &=&\bigl(q^{-s}\chi(\varpi)\bigr)^{n-1} \bigl(1-q^{ns-1}\chi^{-n}(\varpi))\delta_{t,n-1}+(1-q^{-1}) \sum_{m=0}^{n-2} \bigl(q^{-s}\chi(\varpi)\bigr)^m\delta_{t,m}.\end{eqnarray}
The formula for the unramified case now follows.
\end{proof}
\begin{prop} \label{odd sha comp} Suppose that $n$ is odd and that  ${\rm gcd}(p,n)=1$. Let $\psi$ be a normalized character of ${\rm F}$, let $\chi$ be a character of ${\rm F}^*$ and let $L$ be the Lagrangian decomposition of ${\rm F}^*/{{\rm F}^*}^d$ given in Example \ref{kp example}. Set $a=k^i, \, b=k^j$.

If $\chi^n$ is ramified then
$$\tau_{_L}(a,b,\chi,s,\psi)= \begin{cases} \epsilon(1-s,\chi^{-1}\eta_{ab},\psi)  & i-j\equiv e(\chi) \, (\operatorname{mod }n) ; \\ \\ 0  & otherwise .\end{cases}$$
If $\chi$ is unramified and $b\neq a^{-1}$ then
$$\tau_{_L}(a,b,\chi,s,\psi)= \begin{cases} \epsilon(1-s,\chi^{-1}\eta_{ab},\psi)  & i-j\equiv 1 \, (\operatorname{mod }n) ; \\ \\ 0  & otherwise .\end{cases}$$
Finally, if  $\chi$ is unramified then
$$\tau_{_L}(a^{-1},a,\chi,s,\psi)= \bigl(q^{-s}\chi(\varpi)\bigr)^{2i-\alpha(i)} \begin{cases} (1-q^{-1}) L(ns,\chi^n)  & 0\leq i \leq {n-1};   \\ \gamma(1-ns,\chi^{-n},\psi) & i=\frac{n-1}{2}  \end{cases}$$
where
$$\alpha(i)=\begin{cases} 0 & 0 \leq i  \leq  \frac{n-1}{2}  ; \\ n & \frac{n+1}{2} \leq i \leq n-1.\end{cases}$$
\end{prop}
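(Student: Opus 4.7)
The plan is to reduce the statement to Proposition \ref{un ram gama} via Theorem \ref{metasha}. Since $n$ is odd, Theorem \ref{metasha} gives
\[
\tau_{_L}(a,b,\chi,s,\psi) \;=\; \gamma_{_J}\bigl(1-s,\,\chi^{-1}\eta_{ab},\,\psi,\,ab^{-1}\bigr).
\]
Writing $a=k^i$ and $b=k^j$ one has $\eta_{ab}=\eta_k^{i+j}$ and $ab^{-1}=k^{i-j}$, so the task becomes the evaluation of $\gamma_{_J}(1-s,\chi^{-1}\eta_k^{i+j},\psi,k^{i-j})$, and this is exactly what Proposition \ref{un ram gama} computes once one takes the character there to be $\chi\eta_k^{-(i+j)}$ in place of $\chi$.

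To apply Proposition \ref{un ram gama} one must decide whether the twisted character is ramified and, if so, identify its conductor. In the Lagrangian decomposition of Example \ref{kp example}, the tame Hilbert symbol formula, valid because $\gcd(n,p)=1$, shows that $\eta_u$ is unramified, whereas $\eta_k^m$ has conductor exactly $1$ when $m\not\equiv 0\pmod{n}$ and is trivial otherwise. Combined with Lemma \ref{gcd1 stuff}(\ref{nocondchange})---which ensures that twisting by a character whose order divides $n$ preserves the conductor whenever the $n$-th power remains ramified---one can read off the conductor of $\chi\eta_k^{-(i+j)}$ in each of the three situations of the proposition.

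In Case 1 ($\chi^n$ ramified), $\chi\eta_k^{-(i+j)}$ is ramified with conductor $e(\chi)$, and the ramified branch of Proposition \ref{un ram gama} produces $\epsilon(1-s,\chi^{-1}\eta_{ab},\psi)$ precisely when $i-j\equiv e(\chi)\pmod{n}$. In Case 2 ($\chi$ unramified with $b\neq a^{-1}$), the hypothesis means $i+j\not\equiv 0\pmod{n}$, so $\chi\eta_k^{-(i+j)}$ has conductor $1$, and the same branch of Proposition \ref{un ram gama} delivers the condition $i-j\equiv 1\pmod{n}$. In Case 3 ($\chi$ unramified with $b=a^{-1}$), $\eta_{ab}$ is trivial and $ab^{-1}=k^{-2i}$, so the unramified branch of Proposition \ref{un ram gama} applies after writing $k^{-2i}=k^{-t}$ with $0\leq t\leq n-1$; a direct case split on $0\leq i\leq (n-1)/2$ versus $(n+1)/2\leq i\leq n-1$ gives $t=2i-\alpha(i)$, and the split in Proposition \ref{un ram gama} between $t\leq n-2$ and $t=n-1$ translates into the stated split between $i\neq(n-1)/2$ and $i=(n-1)/2$.

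The only real obstacle is the careful bookkeeping of signs in the exponents of $k$ when rewriting the arguments of $\gamma_{_J}$ in the normalized form treated by Proposition \ref{un ram gama}, along with verifying uniformly the conductor computations in Cases 1 and 2. No further analytic input is required beyond Theorem \ref{metasha}, Proposition \ref{un ram gama}, and Lemma \ref{gcd1 stuff}.
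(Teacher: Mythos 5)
Your proposal is correct and follows exactly the same route as the paper: reduce via Theorem \ref{metasha} to the partial $\gamma$-factor $\gamma_{_J}(1-s,\chi^{-1}\eta_{ab},\psi,ab^{-1})$, read this off from Proposition \ref{un ram gama} applied to the twisted character $\chi\eta_k^{-(i+j)}$, and use Lemma \ref{gcd1 stuff} (parts (2) and (4)) to identify the conductor of that twist in the first two cases. The only thing you spell out more fully than the paper is the elementary bookkeeping turning $k^{-2i}$ into $k^{-t}$ with $0\le t\le n-1$, which is exactly where the function $\alpha(i)$ comes from; otherwise the arguments coincide.
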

\begin{proof} By Theorem \ref{metasha},
$$\tau_{_L}(a,b,\chi,s,\psi)= \gamma_{_J}(1-s,\chi^{-1}\eta_{ab},\psi,ab^{-1}).$$
The first equation follows from \eqref{ram gama} along with the fourth assertion in Lemma \ref{gcd1 stuff}. The second equation follows from \eqref{ram gama} along with the second assertion in Lemma \ref{gcd1 stuff}. The third equation follows from  \eqref{ram gama un}.
\end{proof}
\begin{prop} \label{meta un ram gama}
Suppose that $n\equiv 2 \, (\operatorname{mod }4)$ and that  ${\rm gcd}(p,n)=1$. Let $\psi$ be a normalized character of ${\rm F}$ and let $L$ be the Lagrangian decomposition of ${\rm F}^*/{{\rm F}^*}^d$ given in Example \ref{kp example}.

If $\chi^2$ is ramified then
\begin{equation} \label{un ram gama meta}\widetilde{\gamma}_{_J}(1-s,\chi^{-1},\psi, k^{t})=\begin{cases} \chi(-1)\frac{\epsilon(s+\half,\chi,\psi)}{\epsilon(2s,\chi^2,{\psi_{_2}})} & t\equiv e(\chi) \, (\operatorname{mod }d) ; \\ \\ 0  & otherwise .\end{cases} \end{equation}

If $\chi$ is unramified then
\begin{equation} \label{ram gama un meta1}\widetilde{\gamma}_{_J}(1-s,\chi^{-1},\psi,k^{-2t} )= \bigl(q^{-s}\chi(\varpi)\bigr)^{2t} \begin{cases} (1-q^{-1}) L(ns,\chi^n)  & 0 \leq t \leq d-1, t \neq  \frac {d-1}{2} ; \\  \widetilde{\gamma}(1-ds,\chi^{-d},\psi)& t=\frac {d-1}{2}.  \end{cases} \end{equation}

If $\chi^2$ is unramified and $\chi$ is ramified then
\begin{equation} \label{ram gama un meta2} \widetilde{\gamma}_{_J}(1-s,\chi^{-1},\psi,k^{-(2t+1)} )= \begin{cases} \bigl(q^{-s} \chi(\varpi)\bigr)^{2t}(1-q^{-1}) \chi(-1)\epsilon(s+\half,\chi,\psi)L(ns,\chi^n)  & 0 \leq t \leq d-2 ; \\ \bigl(q^{-s} \chi(\varpi)\bigr)^{d-1}  \widetilde{\gamma}(1-ds,\chi^{-d},\psi)& t= d-1.  \end{cases} \end{equation}
\end{prop}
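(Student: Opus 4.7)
The plan is to imitate the proof of Proposition \ref{un ram gama} using the defining expansion
\[
\widetilde{\gamma}_{_J}(1-s,\chi^{-1},\psi,k^{r})=\frac{1}{d}\sum_{l=0}^{d-1}\widetilde{\gamma}(1-s,\chi^{-1}\eta_u^l,\psi)\,\xi^{-rl}
\]
coming from \eqref{part meta gamma def}, combined with the formula \eqref{meta gama formula} which expresses $\widetilde{\gamma}$ as a ratio of two Tate $\gamma$-factors. First I would record the simplifications available in this setting: since $n$ is even and ${\rm gcd}(n,p)=1$ we have $p$ odd, whence Lemma \ref{gcd1 stuff}(3) gives $\gamma_F(\psi_{-1})=1$ and $\psi_2$ is normalized, and the unramified character $\eta_u$ satisfies $\eta_u(-1)=1$. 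Together these reduce \eqref{meta gama formula} to
\[
\widetilde{\gamma}(1-s,\chi^{-1}\eta_u^l,\psi)=\chi(-1)\,\frac{\gamma(s+\tfrac12,\chi\eta_u^{-l},\psi)}{\gamma(2s,\chi^2\eta_u^{-2l},\psi_{_2})}.
\]

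For the first case ($\chi^2$ ramified, which forces $\chi$ ramified too), both $\gamma$-factors equal their $\epsilon$-parts. Using Lemma \ref{gcd1 stuff}(5) to get $e(\chi^2)=e(\chi)$ and the unramified-twist identity \eqref{epsilon twist}, the $l$-dependence of numerator and denominator both collapses to a power of $\xi$, yielding
\[
\widetilde{\gamma}(1-s,\chi^{-1}\eta_u^l,\psi)=\widetilde{\gamma}(1-s,\chi^{-1},\psi)\,\xi^{l\,e(\chi)},
\]
and the $l$-sum produces the stated Kronecker delta. Substituting \eqref{meta gama formula} back gives \eqref{un ram gama meta}.

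For cases 2 and 3 I would set $X=q^{-s}\chi(\varpi)$ and write out the two Tate $\gamma$'s explicitly. The key combinatorial observation is that since $d$ is odd, $\xi^2$ is again a primitive $d$-th root of unity and the map $l\mapsto 2l\pmod d$ is a bijection on $\{0,\dots,d-1\}$; consequently
\[
\prod_{l=0}^{d-1}(1-X^2\xi^{-2l})=1-X^{2d}=L(ns,\chi^n)^{-1},
\]
and the partial products behave exactly as in \eqref{elemntry}. For case 2 the factorizations $1-X^2\xi^{-2l}=(1-X\xi^{-l})(1+X\xi^{-l})$ and $1-q^{-1}X^{-2}\xi^{2l}=(1-q^{-1/2}X^{-1}\xi^{l})(1+q^{-1/2}X^{-1}\xi^{l})$ induce the cancellation of a factor of $\gamma(s+\tfrac12,\cdot,\psi)$ against a factor of $\gamma(2s,\cdot,\psi_{_2})$, so the whole fraction depends on $\xi^{\pm l}$ only. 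Expanding the resulting numerator as $1-q^{-1}-q^{-1/2}(Z-Z^{-1})$ with $Z=X\xi^{-l}$, multiplying by the common denominator, and applying the second identity in \eqref{elemntry}, yields three delta-function contributions indexed by $j\equiv t$, $j\equiv t-\frac{d+1}{2}$, $j\equiv t-\frac{d-1}{2}\pmod d$. A short argument shows the latter two cancel unless $t=(d-1)/2$, which isolates the exceptional exponent. For case 3 the factor $\gamma(s+\tfrac12,\chi\eta_u^{-l},\psi)=\xi^{-l}\epsilon(s+\tfrac12,\chi,\psi)$ (using $e(\chi)=1$, which follows from $\chi|_{\Of^*}$ being a quadratic character together with $\chi$ ramified) simplifies the sum to the same shape as in case 2 with only two delta terms surviving, and the non-cancellation occurs exactly at $t=d-1$.

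The main obstacle is identifying the exceptional-$t$ outputs with $\widetilde{\gamma}(1-ds,\chi^{-d},\psi)$. For case 2 this amounts to verifying the identity
\[
X^{d-1}\widetilde{\gamma}(1-ds,\chi^{-d},\psi)=\chi(-1)\frac{(1-q^{-1})X^{d-1}+q^{-1/2}X^{-1}-q^{-1/2}X^{2d-1}}{1-X^{2d}},
\]
which follows from expanding the right-hand side of \eqref{meta gama formula} and using the factorization $1-q^{-1}X^{-2d}=(1-q^{-1/2}X^{-d})(1+q^{-1/2}X^{-d})$. For case 3 the analogous check reduces via \eqref{epsilon old twist} and \eqref{epsilon twist} to the equality $\epsilon(ds+\tfrac12,\chi^d,\psi)=(q^{-s}\chi(\varpi))^{d-1}\epsilon(s+\tfrac12,\chi,\psi)$, which is verified by writing $\chi^d=\chi\cdot(\chi^2)^{(d-1)/2}$ and applying the unramified-twist and $s$-shift rules to pull out the factor $\chi^{d-1}(\varpi)q^{-s(d-1)}$. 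With these identifications established, the three cases of \eqref{un ram gama meta}--\eqref{ram gama un meta2} all follow.
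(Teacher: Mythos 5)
Your proposal is correct and follows essentially the same route as the paper: expand $\widetilde{\gamma}_{_J}$ via \eqref{part meta  gamma def}, insert \eqref{meta gama formula} using $\gamma_F(\psi_{-1})=1$ (Lemma \ref{gcd1 stuff}(3)) and $\eta_u(-1)=1$, then treat the three ramification cases separately using \eqref{epsilon twist}/\eqref{epsilon old twist} for the ramified piece and \eqref{elemntry} (applied to the primitive root $\xi^2$ since $d$ is odd) for the common-denominator step. Your factorization $1-q^{-1}X^{-2}\xi^{2l}=(1-q^{-1/2}X^{-1}\xi^{l})(1+q^{-1/2}X^{-1}\xi^{l})$ is just an unfolding of the intermediate identities \eqref{nicemetaunram} and \eqref{nicemetasemiunram} that the paper writes down explicitly, and your "three delta contributions of which two cancel off-exceptional $t$" argument is a slightly more explicit bookkeeping than the paper's terse display. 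The final identifications — verifying that the exceptional output equals $X^{2t}\widetilde{\gamma}(1-ds,\chi^{-d},\psi)$ via the formula for unramified $\widetilde\gamma$ in case 2, and via $\epsilon(ds+\tfrac12,\chi^d,\psi)=X^{d-1}\epsilon(s+\tfrac12,\chi,\psi)$ using $\chi^d=\chi\cdot(\chi^2)^{(d-1)/2}$ in case 3 — match the paper's closing steps exactly. One cosmetic slip: in case 2 you index the three delta contributions by $j$, but this should be the summation variable $m$ from the partial-product expansion in \eqref{elemntry}.
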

\begin{proof} Similar to \eqref{beforecommon} we have
$$\widetilde{\gamma}_{_J}(1-s,\chi^{-1},\psi,k^{t} )=
\frac 1 d \sum_{l=0}^{d-1}  \widetilde{\gamma}(1-s,\chi^{-1}\eta_u^l,\psi)\xi^{-tl}.$$
From \eqref{meta gama formula} along with the third assertion in Lemma \ref{gcd1 stuff} and the fact that $-1\in {{\rm F}^*}^d$ it follows that
\begin{equation}\label{metabeforecommon} \widetilde{\gamma}_{_J}(1-s,\chi^{-1},\psi,k^{t} )=
\chi(-1)\frac 1 d \sum_{l=0}^{d-1} \frac{\gamma(s+\half,\chi\eta_u^{-l},\psi)}{\gamma(2s,(\chi\eta_u^{-l})^{2},{\psi_{_2}})}\xi^{-tl}.\end{equation}
Suppose first that $\chi^2$ is ramified, in this case the two $\gamma$-factors in the right hand side of \eqref{metabeforecommon} are $\epsilon$-factors. By  \eqref{epsilon twist} and by the fifth assertion in Lemma \ref{gcd1 stuff} we have
$$\widetilde{\gamma}_{_J}(1-s,\chi^{-1},\psi,k^{t} )=\chi(-1)\frac{\epsilon(s+\half,\chi,\psi)}{\epsilon(2s,\chi^2,{\psi_{_2}})}
\frac 1 d \sum_{l=0}^{d-1} \xi^{l(e(\chi)-t)}=\chi(-1)\frac{\epsilon(s+\half,\chi,\psi)}{\epsilon(2s,\chi^2,{\psi_{_2}})}\delta_{t,e(\chi)}.$$
The theorem for this case is proven.

Suppose now that $\chi$ is unramified. In this case \eqref{meta gama formula} implies that
\begin{equation} \label{nicemetaunram}
\widetilde{\gamma}(1-s,\chi^{-1},\psi)=\frac {(1-q^{-1})+q^{-\half}\bigl(q^{s}\chi^{-1}(\varpi)-q^{-s}\chi(\varpi)\bigr)}{1-q^{-2s}\chi^2(\varpi)}.\end{equation}
Plugging it into \eqref{metabeforecommon} gives
$$\widetilde{\gamma}_{_J}(1-s,\chi^{-1},\psi,k^{-2t} )=\frac 1 d \sum_{l=0}^{d-1}
\frac {(1-q^{-1})+q^{-\half}\bigl(q^{s}\chi^{-1}(\varpi)\xi^l-q^{-s}\chi(\varpi)\xi^{-l}\bigr)}{1-q^{-2s}\chi^2(\varpi)\xi^{-2l}}\xi^{2tl}.$$
Similar to the proof of the  unramified case in Proposition \ref{un ram gama}, we take a common denominator in the right hand side of the last equation and use \eqref{elemntry} (note that since $d$ is odd, $\xi^2$ is a primitive element in $\mu_d$). This gives
\begin{eqnarray} \nonumber &\widetilde{\gamma}_{_J}& \! \! \! \! \! \!(1-s,\chi^{-1},\psi,k^{-2t} ) L(ns,\chi^n)^{-1} \\ \nonumber &=& q^{d-\frac 3 2} \Bigl( \bigl( q^{-s}\chi(\varpi)\bigr)^{-d}-\bigl( q^{-s}\chi(\varpi)\bigr)^d \Bigr)\delta_{2t,-1}+(1-q^{-1}) \sum_{m=0}^{d-1} \bigl(q^{-2s}\chi^2(\varpi)\bigr)^m\delta_{t,m}.\end{eqnarray}
Using \eqref{nicemetaunram} we finsih the proof of the unramified case.

We now prove the case where $\chi$ is ramified and $\chi^2$ is unramified.  In this case,
\begin{equation} \label{nicemetasemiunram}
\widetilde{\gamma}(1-s,\chi^{-1},\psi)= \chi(-1)\epsilon(s+\half,\chi,\psi)\frac{1-q^{2s-1}\chi^{-2}(\varpi)}{1-q^{-2s}\chi^{2}(\varpi)}.
\end{equation}
Combining the second assertion in Lemma \ref{gcd1 stuff} for the $n=2$ case  and \eqref{epsilon twist} we have
\begin{eqnarray} \nonumber  \widetilde{\gamma}_{_J}(1-s,\chi^{-1},\psi,k^{-(1+2t)} ) &=& \chi(-1)\frac 1 d \sum_{l=0}^{d-1}
\epsilon(s+\half,\chi\eta_u^{-l},\psi)\frac{1-q^{2s-1}\chi^{-2}(\varpi)\xi^{2l}}{1-q^{-2s}\chi^{2}(\varpi)\xi^{-2l}}\xi^{(1+2t)l}\\ \nonumber
&=& \chi(-1)\epsilon(s+\half,\chi,\psi)\frac 1 d \sum_{l=0}^{d-1}
\frac{1-q^{2s-1}\chi^{-2}(\varpi)\xi^{2l}}{1-q^{-2s}\chi^{2}(\varpi)\xi^{-2l}}\xi^{2tl}. \end{eqnarray}
By taking a common denominator and by using \eqref{elemntry} we obtain
$$\widetilde{\gamma}_{_J}(1-s,\chi^{-1},\psi,k^{-(1+2t)} )\chi(-1)\epsilon(s+\half,\chi,\psi)^{-1}L(\chi^n,ns)^{-1}=$$
$$\Bigl(\bigl(q^{-2s}\chi^2(\pi)\bigr)^{d-1}L(1-ns,\chi^{-n})^{-1} \Bigr)\delta_{2t+1,-1}+(1-q^{-1}) \sum_{m=0}^{d-2} \bigl((q^{-2s}\chi^2(\varpi)\bigr)^m\delta_{t,m}.$$
Finally, since $d$ is odd and $\chi^2$ is unramified, \eqref{epsilon twist} and \eqref{epsilon old twist} give
$$\epsilon(s+\half,\chi,\psi)=\epsilon(ds+\half-(d-1)s,\chi^d \chi^{1-d},\psi)=\bigl(q^{-s}\chi(\varpi)\bigr)^{1-d}\epsilon(ds+\half,\chi^d,\psi).$$
Using \eqref{nicemetasemiunram} the proof is now completed.
\end{proof}
\begin{prop} \label{even sha comp}  Suppose that $n\equiv 2 \, (\operatorname{mod }4)$  and that  ${\rm gcd}(p,n)=1$. Let $\psi$ be a normalized character of ${\rm F}$, let $\chi$ be a character of ${\rm F}^*$ and let $L$ be the Lagrangian decomposition of ${\rm F}^*/{{\rm F}^*}^d$ given in Example \ref{kp example}. Set $a=k^i, \, b=k^j$.

If $\chi^n$ is ramified then
$$\tau_{_L}(a,b,\chi,s,\psi)= \begin{cases} \chi(-1)\frac{\epsilon(s+\half,\chi\eta_{ab}'^{-1},\psi)}{\epsilon(2s,\chi^2 \eta_{ab}'^{-2},{\psi_{_2}})} & i-j\equiv e(\chi) \, (\operatorname{mod }d) ; \\ \\ 0  & otherwise .\end{cases}$$
If $\chi^n$ is unramified and $b\neq a^{-1}$ then
$$\tau_{_L}(a,b,\chi,s,\psi)= \begin{cases} \chi(-1)\frac{\epsilon(s+\half,\chi\eta_{ab}'^{-1},\psi)}{\epsilon(2s,\chi^2 \eta_{ab}'^{-2},{\psi_{_2}})} & i-j\equiv 1 \, (\operatorname{mod }d) ; \\ \\ 0  & otherwise .\end{cases}$$
If  $\chi$ is unramified then
$$\tau_{_L}(a^{-1},a,\chi,s,\psi)=\bigl(q^{-s}\chi(\varpi)\bigr)^{2i} \begin{cases} (1-q^{-1}) L(ns,\chi^n)  & 0 \leq i \leq d-1, i \neq  \frac {d-1}{2} ; \\  \widetilde{\gamma}(1-ds,\chi^{-d},\psi)& i=\frac {d-1}{2}.\end{cases}$$
If  $\chi^2$ is unramified and $\chi$ is  ramified then
$$\tau_{_L}(a^{-1},a,\chi,s,\psi)=  \begin{cases} \bigl(q^{-s} \chi(\varpi)\bigr)^{d-1} \widetilde{\gamma}(1-ds,\chi^{-d},\psi) & i=\frac{d-1}{2}  ; \\ \bigl(q^{-s} \chi(\varpi)\bigr)^{2i-1+\beta(i)}  (1-q^{-1})\chi(-1)\epsilon(s+\half,\chi,\psi)L(ns,\chi^n) & 0 \leq i \leq d-1, \,  i \neq  \frac{d-1}{2} \ \end{cases}$$
where $$\beta(i)=\begin{cases} d & 0 \leq i \leq \frac{d-3}{2}  ; \\ -d & \frac{d+1}{2} \leq i \leq d-1.\end{cases}$$
\end{prop}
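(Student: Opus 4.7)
Closely following the pattern of the proof of Proposition~\ref{odd sha comp}, the starting point is Theorem~\ref{metasha}, which gives
$$
\tau_{_L}(a,b,\chi,s,\psi)=\widetilde{\gamma}_{_J}\bigl(1-s,\chi^{-1}\eta'_{ab},\psi,ab^{-1}\bigr).
$$
Setting $\widetilde{\chi}=\chi\eta'^{-1}_{ab}$, this equals $\widetilde{\gamma}_{_J}(1-s,\widetilde{\chi}^{-1},\psi,k^{i-j})$ in the first two cases; in cases 3 and 4 we have $b=a^{-1}$, so $ab=1$, $\eta'_{ab}\equiv 1$, and the expression reduces to $\widetilde{\gamma}_{_J}(1-s,\chi^{-1},\psi,k^{-2i})$. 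Each stated formula is then obtained by feeding the appropriate formula of Proposition~\ref{meta un ram gama} with $\widetilde{\chi}$ (or $\chi$) in place of $\chi$.

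Case 3 is the most direct: since $\chi$ is unramified I apply \eqref{ram gama un meta1} with $t=i$, and the two sub-cases translate verbatim. For cases 1 and 2 I would invoke \eqref{un ram gama meta}. The key observation is that $\widetilde{\chi}^n=\chi^n$, so in case 1 the ramification hypothesis $\widetilde{\chi}^2$ ramified cannot fail (otherwise $\widetilde{\chi}^n$ would be unramified, contradicting $\chi^n$ ramified); combining the fifth assertion of Lemma~\ref{gcd1 stuff} with the tameness of $\eta'^{-1}_{ab}$ then yields $e(\widetilde{\chi})=e(\chi)$. In case 2 the analogous reasoning together with $ab\neq 1$ forces $e(\widetilde{\chi})=1$. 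The factor $\widetilde{\chi}(-1)$ collapses to $\chi(-1)$ because $-1\in{\rm F}^{*d}$ (as the $d$-th power of a primitive $2d$-th root of unity in ${\rm F}^*$), so $\eta'_{ab}(-1)=(ab,-1)^m_d=1$. Plugging into \eqref{un ram gama meta} yields both $\epsilon$-factor formulas.

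Case 4 is purely an index reshuffle of \eqref{ram gama un meta2}. Since that formula is indexed by odd exponents $-(2t+1)$ while the exponent of $k$ here is the even $-2i$, I solve $-2i\equiv-(2t+1)\pmod d$. With $d$ odd, $2^{-1}\equiv(d+1)/2\pmod d$, so $t\equiv i-(d+1)/2\pmod d$; the representative in $\{0,\dots,d-1\}$ is $i-(d+1)/2$ when $i\geq(d+1)/2$ and $i+(d-1)/2$ when $i\leq(d-1)/2$. Correspondingly $2t=2i-1\mp d=2i-1+\beta(i)$, and $t=d-1$ iff $i=(d-1)/2$, which accounts for the exceptional branch. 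The hard part of the whole argument is the bookkeeping in case 2 when $\chi$ is itself tame: one has to verify that the vanishing condition $i-j\not\equiv 1\pmod d$ still forces cancellation of the sum defining the partial $\widetilde{\gamma}$-factor in \eqref{part meta gamma def}, which ultimately rests on the orthogonality relation \eqref{char func} combined with $\eta'_{ab}(-1)=1$.
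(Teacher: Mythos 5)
Your strategy — reduce to Theorem~\ref{metasha}, write $\widetilde{\chi}=\chi\eta'^{-1}_{ab}$, and then plug into the appropriate formula of Proposition~\ref{meta un ram gama} — is exactly the paper's argument, just unpacked with more detail. Cases 3 and 4 are correct: case 3 is a verbatim substitution $t=i$ into \eqref{ram gama un meta1}, and your index reshuffle in case 4 (solving $-2i\equiv -(2t+1)\pmod d$ with $2^{-1}\equiv(d+1)/2$, yielding $2t=2i-1+\beta(i)$ and $t=d-1$ iff $i=(d-1)/2$) is right. Case 1 is also correct: the key identity $\widetilde{\chi}^n=\chi^n$ (since $\eta'^d_{ab}=1$) shows that $\chi^n$ ramified forces $\widetilde{\chi}^2$ ramified, and $e(\widetilde{\chi})=e(\chi)$ follows either from your double use of the fifth assertion of Lemma~\ref{gcd1 stuff} or, as the paper does, from the fourth assertion directly. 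The observation $\eta'_{ab}(-1)=1$ (because $-1\in{\rm F}^{*d}$) is exactly what turns $\widetilde{\chi}(-1)$ into $\chi(-1)$.

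The gap is in case 2, and you in fact flag it yourself without closing it. To invoke \eqref{un ram gama meta} you must first establish that $\widetilde{\chi}^2=\chi^2\eta'^{-2}_{ab}$ is ramified; only then is $\widetilde{\chi}$ forced to be ramified of conductor $1$, and only then does the dichotomy ``$i-j\equiv 1$ vs. $0$'' emerge. Under the stated hypothesis ``$\chi^n$ unramified'' this can fail: since $n=2d$ with $d$ odd, $\chi^n=(\chi^2)^d$ can be unramified while $\chi^2$ is tamely ramified (a conductor-one character of order dividing $d$), and then for the specific class $ab$ with $\chi^2|_{\Of^*}=\eta'^{2}_{ab}|_{\Of^*}$ the character $\widetilde{\chi}^2$ is unramified, so \eqref{un ram gama meta} simply does not apply and the answer is governed by \eqref{ram gama un meta1} or \eqref{ram gama un meta2} instead. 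Your closing remark that the vanishing ``ultimately rests on the orthogonality relation \eqref{char func} combined with $\eta'_{ab}(-1)=1$'' is not a substitute for this verification: orthogonality is what proves \eqref{un ram gama meta}, but you still owe the hypothesis of \eqref{un ram gama meta}. The argument does go through cleanly whenever $\chi^2$ is unramified (then $\widetilde{\chi}^2$ is unramified times nontrivial-tame, hence ramified of conductor $1$, and $e(\widetilde{\chi})=1$), so you should state that the working hypothesis in case 2 is really $\chi^2$ unramified rather than the weaker $\chi^n$ unramified — this is the same silent strengthening the paper's one-line proof of case 2 makes by citing only the second assertion of Lemma~\ref{gcd1 stuff}.
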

\begin{proof}
By Theorem \ref{metasha},
$$\tau_{_L}(a,b,\chi,s,\psi)= \widetilde{\gamma}_{_J}(1-s,\chi^{-1}\eta_{ab},\psi,ab^{-1}).$$
The first equation follows from \eqref{un ram gama meta} along with the fourth assertion in Lemma \ref{gcd1 stuff}. The second equation follows from \eqref{un ram gama meta} along with the second assertion in Lemma \ref{gcd1 stuff}. The third and fourth equations follow from  \eqref{ram gama un meta1} and \eqref{ram gama un meta2} respectively.
\end{proof}
\section{Plancherel measure }\label{planme}
\subsection{Definition}
Let $\sigma$ be a genuine smooth irreducible  representation of  $\widetilde{\rm{H}}$. Since the isomorphism class of  $\sigma$ is determined by its central character, it follows from \eqref{sigmainter} that $\sigma_s \cong (\sigma_s)^w$ only for a (possibly empty) finite set of values of $q^{-ds}$. By Part 2 of Theorem 7.1 in \cite{Mc}, this implies that for almost all values for $q^{-ds}$, the commuting algebra of ${\rm I}(\sigma_s)$ is 1 dimensional. Thus, following Shahidi, see Section 5.3 in \cite{Shabook}, we define  $\mu_n(\sigma,s)$, the Plancherel measure associated with $\sigma$ by
\begin{equation} \label{pandef} {\rm A}_{w^{-1}}\bigl((\sigma_s)^w\bigr )\circ {\rm A}_{w}\bigl(\sigma_s \bigr)=\mu_n(\sigma,s)^{-1}Id. \end{equation}
Note that $\mu_n(\sigma,s)$ is a rational function in $q^{-s}$. The Plancherel measure depends weakly on $\psi$ via the normalization of the Haar measure in the intertwining operators. We suppress this dependence. We note here that since
$w^{-1}=\bigl(-I_2,(-1,-1)_n\bigr)w$
and since $\bigl(-I_2,(-1,-1)_n\bigr) \in \widetilde{C}$ we obtain
\begin{equation} \label{pandef modi} {\rm A}_{w}\bigl((\sigma_s)^w\bigr )\circ {\rm A}_{w}\bigl(\sigma_s \bigr)=\mu_n(\chi_\psi,s)^{-1}(\chi_\sigma)^{-1}\bigl(-I_2,(-1,-1)_n\bigr).\end{equation}
\subsection{Relations with Shahidi local coefficients matrices}
By Remark \ref{plan is lc}, it follows from \eqref{pandef} that
\begin{equation} \label{gensha} {\rm A}_w^\psi \bigl(\sigma_s \bigr) \circ  {A^{\psi}_{w^{-1}}}\bigl((\sigma_s)^w \bigr)=\mu_n(\sigma,s)^{-1}.\end{equation}
This is a metaplectic analog for the relation between Shahidi local coefficients and Plancherel measures, see Corollary 5.3.1 in \cite{Shabook}.

Since we realize ${\rm I}\bigl(\sigma_s \bigr)$ as  ${\rm I}\bigl(\chi'_{\psi,s}\bigr)$ it is convenient to use the notation $$\mu_n(\chi'_\psi,s)=\mu_n(\sigma,s).$$ However, it is important to note that $\mu_n(\chi'_\psi,s)$ is an invariant of $\sigma$. It is independent of the choice of a maximal abelian subgroup and of the chosen extension of $\chi_\sigma$. In particular
\begin{equation}\label{aver}
 \mu_n(\chi'_\psi,s)^{-1}={(\# \overline{K})}^{-1}\sum_{k \in \overline{K}} \mu_n \bigl((\chi'\eta_k)_\psi,s \bigr)^{-1}.
 \end{equation}
Remark \ref{plan is lc} and \eqref{pandef} imply that for any $\xi \in \operatorname{Wh}_\psi \bigl({\rm I}\bigl({(\chi'^{-1}_\psi)}_{-s}\bigr)\bigr)$ we have
\begin{equation}  \label{central -1} { A^{\psi}_{w}}\bigl(\chi'_\psi,s\bigr) \circ {A^{\psi}_{w}}\bigl({(\chi'^{-1})}\! _\psi,-s\bigr)(\xi)=\mu_n(\chi_\psi,s)^{-1}(\chi_\sigma)^{-1}\bigl(-I_2,(-1,-1)_n\bigr)\xi.\end{equation}
In particular,  for $\xi=\lambda_{1,\chi^{-1},\psi,-s}$, \eqref{mykpmatrix} implies
\begin{equation}\label{plan and sum}  \mu_n(\chi_\psi,s)^{-1}=(-1,-1)_n \, \chi_\psi(-1)\sum_{k \in \overline{K}} \tau_{_L}(1,k,\chi_{},s,\psi)\tau_{_L}(k^{-1},1,\chi^{-1}_{},-s,\psi).\end{equation}
Note that in the case where $n \leq 2$, there is only one summand in the right hand side of \eqref{plan and sum}. The  formula for $\mu_1(\chi_\psi,s)$ is well known. The formula for $\mu_2(\chi_\psi,s)$ follows easily from \eqref{norweildef} along with our previous work \cite{Sz2}. For the sake of completeness we now give these formulas along with proofs. In the $n=1$ case, \eqref{plan and sum} along with Theorem \ref{metasha} and \eqref{epsilon twist and inv} give
\begin{equation} \label{plan1}\mu_1(\chi,s)^{-1}= \chi(-1)\gamma(1-s, \chi^{-1},\psi)\gamma( 1+s,\chi,\psi)= q^{e(\psi)-e(\chi)}\frac{L (s,\chi )L (-s,\chi^{-1}\bigr)}{L (1-s,\chi^{-1} )L (1+s,\chi)}. \end{equation}
Combining the same arguments with \eqref{epsilon twist and inv} and \eqref{lasttwist} give
 \begin{eqnarray} \label{plan2}\mu_2(\chi_\psi,s)^{-1} &=& (-1,-1)_2 \chi_\psi(-1)\widetilde{\gamma}(1-s, \chi^{-1},\psi)\widetilde{\gamma}( 1+s,\chi,\psi)\\ \nonumber &=& q^{e(\psi_2)-e(\chi^2)}\frac{L (2s,\chi^2 )L (-2s,\chi^{-2}\bigr)}{L (1-2s,\chi^{-2} )L (1+2s,\chi^2)}.\end{eqnarray}
\subsection{Averaging formulas} \label{avformula}
\begin{thm} \label{new plan}  Let $\sigma$ be a genuine smooth irreducible  representation of  $\widetilde{\rm{H}}$ with a central character  $\chi_\sigma$. Let $\chi$ be a character of ${\rm F}^*$ such that $\chi''_\psi=\chi_\sigma$.
\begin{equation}
 \label{new plan form} \mu_n(\sigma,s)^{-1}=[{\rm F}^*:{{\rm F}^*}^d]^{-1}  \sum_{\eta \in \widehat{{\rm F}^*/ {{\rm F}^*}^d}} \mu_{n/d} \bigl((\chi\eta)_\psi,s \bigr)^{-1}. \end{equation}
\end{thm}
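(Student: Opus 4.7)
The plan is to combine the averaging identity \eqref{aver} with formula \eqref{plan and sum} applied to varying extensions of the central character. In isolation \eqref{aver} is a tautology — every summand on its right-hand side equals the left — but when each summand is expanded via \eqref{plan and sum} using a \emph{different} extension $\chi\eta_{k_0}$ of $\chi_\sigma$, the aggregate collapses via orthogonality on $\overline{K}$ into the desired harmonic mean.

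Concretely, start from
$$\mu_n(\chi'_\psi,s)^{-1} = (\#\overline{K})^{-1} \sum_{k_0 \in \overline{K}} \mu_n\bigl((\chi'\eta_{k_0})_\psi,s\bigr)^{-1},$$
and expand each summand via \eqref{plan and sum}, this time with $\chi\eta_{k_0}$ playing the role of $\chi$. Theorem \ref{metasha} expresses the resulting $\tau_L$ entries as partial $\gamma$-factors for odd $n$, or as partial $\widetilde{\gamma}$-factors for $n\equiv 2\,(\operatorname{mod}\,4)$; expanding these via \eqref{part gamma def} produces a quadruple sum indexed by $k_0, k \in \overline{K}$ and $j_1, j_2 \in \overline{J}$.

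Next, substitute $a = k_0^{-1} k$ in place of $k_0$. The arguments of the two $\gamma$-factors become $\chi^{-1}\eta_{aj_1}$ and $\chi\eta_{a^{-1}j_2}$ respectively, losing all dependence on $k$; the only remaining $k$-dependence is the phase $\eta_{j_1 j_2}(k)$. Summing over $k \in \overline{K}$ and invoking the duality $\overline{K} \cong \widehat{\overline{J}}$ forces $j_1 j_2 = 1$, hence $j_2 = j_1^{-1}$. Letting $x := aj_1$ range over ${\rm F}^*/{{\rm F}^*}^d$ as $(a, j_1) \in \overline{K} \times \overline{J}$ varies, the remaining double sum collapses to
$$\sum_{x \in {\rm F}^*/{{\rm F}^*}^d} \gamma(1-s, \chi^{-1}\eta_x, \psi)\, \gamma(1+s, \chi\eta_x^{-1}, \psi)$$
(and analogously with $\widetilde{\gamma}$ in the $n \equiv 2\,(\operatorname{mod}\,4)$ case). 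Identifying each summand via \eqref{plan1} as $\mu_1(\chi\eta_x^{-1},s)^{-1}$ — or via \eqref{plan2} as $\mu_2((\chi\eta_x^{-1})_\psi,s)^{-1}$ in the even case — up to an explicit sign, and reindexing $\eta = \eta_x^{-1}$ in $\widehat{{\rm F}^*/{{\rm F}^*}^d}$, yields \eqref{new plan form} once one notes that $(\#\overline{J})^2 = [{\rm F}^*:{{\rm F}^*}^d]$.

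The main technical obstacle will be tracking all the sign factors: the prefactor $(-1,-1)_n\chi_\psi(-1)$ in \eqref{plan and sum}, the twist $\eta_{k_0}(-1)$ coming from the new central character, and the sign $(\chi\eta)(-1)$ appearing in each $\mu_1$ or $\mu_2$ formula must all cancel. For odd $n$ this is automatic because $-1 \in {{\rm F}^*}^n$, making every such contribution trivial. For $n \equiv 2\,(\operatorname{mod}\,4)$ one must carefully combine Lemma \ref{res hil} with the Weil-index identities \eqref{weil prop}, \eqref{weilsqyare}, and \eqref{weilchangepsi} to verify that the $\gamma_\psi$ twists in the definition of $(\chi\eta_{k_0})_\psi$ conspire with the Hilbert symbol prefactor to produce exactly $\mu_2((\chi\eta)_\psi,s)^{-1}$ in each summand.
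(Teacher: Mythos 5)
Your proposal is correct and takes essentially the same route as the paper: both proofs combine the averaging identity \eqref{aver} with \eqref{plan and sum}, expand via Theorem \ref{metasha} and \eqref{part gamma def}, and then use character orthogonality on $\overline{K}$ (in the paper, via $\sum_{y\in\overline{K}}\eta_j(y)$; in yours, via $\sum_{k\in\overline{K}}\eta_{j_1j_2}(k)$) to collapse the sum. The only difference is organizational: the paper first reduces to the identity labelled \eqref{to show}, expands the left side, and invokes the tautology \eqref{aver} midway through, whereas you begin with \eqref{aver}, substitute $a = k_0^{-1}k$ so the $k$-sum immediately forces $j_2 = j_1^{-1}$, and then collapse $(a,j_1) \mapsto x = aj_1$; this is a slightly tidier execution of the same plan. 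Regarding the sign bookkeeping you flag for $n\equiv 2\,(\operatorname{mod}\,4)$: since $-1\in{{\rm F}^*}^d$ one has $\eta_{k_0}(-1)=1$ so $(\chi\eta_{k_0})_\psi(-1)=\chi_\psi(-1)$ is independent of $k_0$, and $(-1,-1)_n=(-1,-1)_2$ by the argument of Lemma \ref{res hil} (since $(-1,-1)_d=1$), so the prefactor in \eqref{plan and sum} matches that in \eqref{plan2} exactly; the concern you raise resolves cleanly and is no more delicate than what the paper itself elides.
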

\begin{proof}
We first prove \eqref{new plan form} under the assumption that $n$ is odd. We note that for any $j \in J$, $(\chi \eta_j)(-1)=\chi(-1)$. Thus, by \eqref{plan and sum} and by the left equality in \eqref{plan1} it is sufficient to show that
\begin{eqnarray} \label{to show}
&& \sum_{k \in \overline{K}} \tau_{_L}(1,k,\chi_{},s,\psi)\tau_{_L}(k^{-1},1,\chi^{-1}_{},-s,\psi) \\ \nonumber
&&=[{\rm F}^*:{{\rm F}^*}^d]^{-1} \sum_{x\in  {\rm F}^*/{{\rm F}^*}^d} \gamma (1-s,\chi^{-1}\eta^{-1}_{x},\psi )\gamma (1+s,\chi\eta_{x},\psi).
\end{eqnarray}
By Theorem \ref{metasha} and by \eqref{part gamma def}
\begin{eqnarray} \nonumber  &&\sum_{k \in \overline{K}} \tau_{_L}(1,k,\chi_{},s,\psi)\tau_{_L}(k^{-1},1,\chi^{-1}_{},-s,\psi) \\ \nonumber &&=[{\rm F}^*:{{\rm F}^*}^d]^{-1}  \sum_{k\in \overline{K}}  \sum_{h\in {\overline{J}}} \sum_{j\in {\overline{J}}} \gamma (1-s,\chi^{-1}\eta_{jk},\psi )\gamma (1+s,\chi\eta_{hk^{-1}},\psi)\eta_{jh}(k).\end{eqnarray}
We now change a summation index, $j \mapsto h^{-1}j$. This gives
\begin{eqnarray} \nonumber  && \sum_{k \in \overline{K}} \tau_{_L}(1,k,\chi_{},s,\psi)\tau_{_L}(k^{-1},1,\chi^{-1}_{},-s,\psi)\\ \nonumber &&=[{\rm F}^*:{{\rm F}^*}^d]^{-1}\sum_{k\in \overline{K}}  \sum_{h\in {\overline{J}}} \sum_{j\in {\overline{J}}} \gamma (1-s,(\chi\eta_{hk^{-1}})^{-1}\eta_{g},\psi )\gamma (1+s,\chi\eta_{hk^{-1}},\psi )\eta_{j}(k)\\ \nonumber &&=[{\rm F}^*:{{\rm F}^*}^d]^{-1}  \sum_{x\in {{\rm F}^*}/{{\rm F}^*}^d} \sum_{j\in {\overline{J}}} \gamma \bigl(1-s,(\chi\eta_{x})^{-1}\eta_j,\psi \bigr)\gamma (1+s,\chi\eta_{x},\psi )\eta_{j}(x^{-1}).\end{eqnarray}
By \eqref{aver} along with \eqref {plan and sum} we now conclude that
\begin{eqnarray}\nonumber
&& \sum_{k \in \overline{K}} \tau_{_L}(1,k,\chi_{},s,\psi)\tau_{_L}(k^{-1},1,\chi^{-1}_{},-s,\psi)\\ \nonumber &&=[{\rm F}^*:{{\rm F}^*}^d]^{-\frac 3 2}\sum_{y \in  \overline{K}}  \sum_{x\in {{\rm F}^*}/{{\rm F}^*}^d} \sum_{j\in {\overline{J}}} \gamma \bigl(1-s,(\chi\eta_{xy})^{-1}\eta_j,\psi \bigr)\gamma (1+s,\chi\eta_{xy},\psi \bigr)\eta_{j}(x^{-1}).
\end{eqnarray}
We finally change another summation index, $x \mapsto y^{-1}x$ and change the order of summation.
\begin{eqnarray} \nonumber
&& \sum_{k \in \overline{K}} \tau_{_L}(1,k,\chi_{},s,\psi)\tau_{_L}(k^{-1},1,\chi^{-1}_{},-s,\psi)\\ \nonumber
&&=[{\rm F}^*:{{\rm F}^*}^d]^{-\frac 3 2}\sum_{y \in  \overline{K}}  \sum_{x\in {{\rm F}^*}/{{\rm F}^*}^d} \sum_{j\in {\overline{J}}} \gamma \bigl(1-s,(\chi\eta_{x})^{-1}\eta_j,\psi \bigr)\gamma (1+s,\chi\eta_{x},\psi )\eta_{j}(x^{-1}y)\\ \nonumber &&=[{\rm F}^*:{{\rm F}^*}^d]^{-\frac 3 2}\sum_{x\in {{\rm F}^*}/{{\rm F}^*}^d} \sum_{j\in {\overline{J}}}   \gamma \bigl(1-s,(\chi\eta_{x})^{-1}\eta_j,\psi \bigr)\gamma (1+s,\chi\eta_{x},\psi \bigr)\eta_{j}(x^{-1}) \sum_{y \in  \overline{K}} \eta_j(y).\end{eqnarray}
Since for $j \in {J}$, $$\sum_{y \in  \overline{K}} \eta_j(y)=\begin{cases} [{\rm F}^*:{{\rm F}^*}^n]^{1/2}  &  j \in {{\rm F}^*}^d; \\ 0  &   \operatorname{otherwise}, \end{cases}$$
Equation \eqref{to show} now follows. The proof for the case $n \equiv 2 \, (\operatorname{mod }4)$ follows in the same way, replacing $\gamma$ by $\widetilde{\gamma}$ and $\eta$ by $\eta'$.
\end{proof}
Recall that $\widetilde{C}^{(n)}$ is the center of $\widetilde{\rm{H}}^{(n)}$, the inverse image of ${\rm H}$ inside $\widetilde{\rm{G}}^{(n)}$. From Lemma \ref{center and max} it follows that if $m$ divides $n$ and ${\rm gcd}(n,2)={\rm gcd}(m,2)$ then $\widetilde{C}^{(n)}$ is a subgroup of   $\widetilde{C}^{(m)}$ although $\widetilde{\rm{H}}^{(n)}$ is not a subgroup of   $\widetilde{\rm{H}}^{(m)}$. We say that a genuine smooth irreducible representation $\sigma$ of $\widetilde{\rm{H}}^{(n)}$  and a genuine smooth irreducible  representation $\pi$ of  $\widetilde{\rm{H}}^{(m)}$ are related if $\chi_\sigma$ is the restriction of $\chi_\pi$. Denote $$c=\begin{cases} m &  m \, \operatorname{is} \,  \operatorname{odd }; \\  \frac {m} {2}  &  m \, \operatorname{is} \,  \operatorname{even}  \end{cases}$$
 and observe that our assumptions on $m$ and $n$ implies that $c$ divides $d$. From the description of the genuine characters of $\widetilde{C}^{(n)}$ given in Section \ref{modelsection} one concludes that the set $E_m(\sigma)$ of  genuine smooth irreducible representations of $\widetilde{\rm{H}}^{(m)}$ related to $\sigma$ is in bijection with the set of characters of ${{\rm F}^*}^c$ which are trivial on ${{\rm F}^*}^d$, namely with the set
 $$\{{\eta^o_x} \mid x \in {\rm F}^*/{{\rm F}^*}^{d} \}$$
 where ${\eta^o_x}$ is the restriction of $\eta_x$ to  ${{\rm F}^*}^c$. Note now that ${\eta^o_x}={\eta^o_y}$ if and only if ${(xy^{-1})}^{c} \in {{\rm F}^*}^d$. Thus,  $E_m(\sigma)$ is parameterized by
 $$\{{\eta^o_a} \mid x \in ({\rm F}^*/{{\rm F}^*}^{d}) /  ({\rm F}^*/{{\rm F}^*}^{c}) \}$$ which is naturally identified with $\widehat{({\rm F}^*/{{\rm F}^*}^{d})} / \widehat{ ({\rm F}^*/{{\rm F}^*}^{c})} .$ Therefore, by writing the right hand side of \eqref{new plan form} as
$${\bigl(\# E_m(\sigma )\bigr)}^{-1} \! \!   \! \!\sum_{\pi  \in \widehat{({\rm F}^*/{{\rm F}^*}^{d})} / \widehat{ ({\rm F}^*/{{\rm F}^*}^{c})} }\Biggr({[{{\rm F}^*}:{{\rm F}^*}^c]}^{-1}  \! \! \sum_{\eta \in \widehat{{\rm F}^*/{{\rm F}^*}^{c}}} \! \! \mu_{m/c} \bigl((\chi\eta \pi)_\psi,s \bigl) \Biggr)$$
we have proven the following
\begin{cor} \label{plan as av}
$$\mu_n(\sigma,s)^{-1}={\bigl(\# E_m(\sigma)\bigr)}^{-1} \sum_{\pi \in  E_m(\sigma) } \mu_m(\pi,s)^{-1}$$
\end{cor}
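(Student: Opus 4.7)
The plan is to obtain Corollary \ref{plan as av} as a rearrangement of the averaging formula \eqref{new plan form} in Theorem \ref{new plan}: apply it to $\sigma$, regroup the outer sum into cosets of the smaller dual group, and recognize each inner sum as a second instance of \eqref{new plan form} applied inside $\widetilde{{\rm G}}^{(m)}$.

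The arithmetic setup is the first key step. The hypotheses $m \mid n$ and $\gcd(n,2)=\gcd(m,2)$ force $c \mid d$ and $n/d = m/c$. In particular ${{\rm F}^*}^d \subseteq {{\rm F}^*}^c$, so $\widehat{{\rm F}^*/{{\rm F}^*}^c}$ embeds canonically into $\widehat{{\rm F}^*/{{\rm F}^*}^d}$, and the quotient $\widehat{{\rm F}^*/{{\rm F}^*}^d}/\widehat{{\rm F}^*/{{\rm F}^*}^c}$ is naturally identified with $E_m(\sigma)$ via the bijection established just before the corollary. I would then apply Theorem \ref{new plan} to $\sigma$ and split the outer sum as an iterated sum: over coset representatives (identified with $\pi \in E_m(\sigma)$) and then over the subgroup $\widehat{{\rm F}^*/{{\rm F}^*}^c}$. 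For each fixed $\pi$, the inner sum is exactly the right-hand side of \eqref{new plan form} applied to $\pi$ in $\widetilde{{\rm G}}^{(m)}$, scaled by the factor $[{\rm F}^*:{{\rm F}^*}^c]$, since the equality $n/d = m/c$ makes the innermost Plancherel measures $\mu_{n/d}$ and $\mu_{m/c}$ coincide.

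Collecting the combinatorial factors then yields
$$\mu_n(\sigma,s)^{-1} = \frac{[{\rm F}^*:{{\rm F}^*}^c]}{[{\rm F}^*:{{\rm F}^*}^d]} \sum_{\pi \in E_m(\sigma)} \mu_m(\pi,s)^{-1},$$
and the ratio of indices is precisely $1/\#E_m(\sigma)$, again by the bijection above. The main (and only) obstacle is the bookkeeping: one must verify that for each coset representative $\eta^o$, the character $\chi\eta^o$ of ${\rm F}^*$ plays the role of ``$\chi$'' for the intended $\pi \in E_m(\sigma)$ when applying Theorem \ref{new plan} at level $m$. This compatibility reduces to the fact that the twist $\chi \mapsto \chi_\psi$ takes the same form at both cover indices, which holds because the formula for $\chi_\psi$ depends only on the parity of the cover index, and $\gcd(n,2) = \gcd(m,2)$ by hypothesis.
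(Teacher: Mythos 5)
Your proposal is correct and follows the paper's own argument: both split the outer sum in Theorem \ref{new plan} into cosets of $\widehat{{\rm F}^*/{{\rm F}^*}^c}$ inside $\widehat{{\rm F}^*/{{\rm F}^*}^d}$, identify that quotient with $E_m(\sigma)$, recognize each inner sum as an application of \eqref{new plan form} at level $m$ (using $n/d=m/c$), and cancel the index ratio against $\#E_m(\sigma)$. Your closing remark about the $\chi\mapsto\chi_\psi$ twist depending only on the parity of the cover index is exactly the compatibility the paper relies on implicitly.
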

Observe that Corollary \ref{plan as av} generalizes \eqref{new plan form}. It presents the Plancherel measure associated a  genuine smooth irreducible representation $\sigma$ of $\widetilde{\rm{H}}^{(n)}$  as the harmonic mean of the  Plancherel measures associated with the genuine smooth irreducible representations of $\widetilde{\rm{H}}^{(m)}$ related to $\sigma$.
\subsection{$T(\sigma,s,\psi)$, $D(\sigma,s,\psi)$ and the Plancherel measure} \label{DTandPlan}
Recall that  we have defined $T(\sigma,s,\psi)$ and $D(\sigma,s,\psi)$ respectively to be the trace and determinant  of an Slcm associated with $\sigma$ and $\psi$ and that these are well defined invariants of $\sigma$ and $\psi$. For $n \leq 2$, the Slcms are scalars. Thus, for these cases the relation between $T(\sigma,s,\psi)$, $D(\sigma,s,\psi)$ and the Plancherel measure is given in \eqref{plan1} and \eqref{plan2}. We generalize these relations for all $n$.

We first note that
\begin{equation} \label{new new  plan form}
 D(\sigma,s,\psi)D(\sigma^w,-s,\psi)=\Bigl(\mu_n(\sigma,s)\chi_\sigma\bigl(-I_2,(-1,-1)_n\bigr)\Bigr)^{-d\ab d \ab^{-\half}}. \end{equation}
Indeed, this follows at once from \eqref{central -1} and from \eqref{whidim}. We now prove the following
\begin{proposition} \label{traceandplan}Let $\sigma$ be a genuine smooth irreducible  representation of  $\widetilde{\rm{H}}$ with a central character  $\chi_\sigma$. For odd $n$ we have
\begin{equation} \label{traceandplan1} [{\rm F}^*:{{\rm F}^*}^d]^{-1}\sum_{a \in {{\rm F}^*/{{\rm F}^*}^d}}\ab a \ab^{-1}  T(\sigma,s,\psi_a) T(\sigma^w,-s,\psi_a)=\chi_\sigma(-I_2,1)
\mu_n(\sigma,s)^{-1}.\end{equation}
For  $n \equiv 2\, (\operatorname{mod }4)$ we have
\begin{eqnarray}\label{traceandplan2} &&[{\rm F}^*:{{\rm F}^*}^d]^{-1} \! \! \! \sum_{a \in {{\rm F}^*/{{\rm F}^*}^d}} \! \! \ab a \ab^{-1} (a,-1)_2 \, T\bigr( (\cdot,a)_2 \otimes \sigma,s,\psi_a \bigr) T((\cdot,a)_2 \otimes \sigma^w,-s,\psi_a)\\ \nonumber &&= \chi_\sigma\bigl(-I_2,(-1,-1)_n \bigr) \mu_n(\sigma,s)^{-1}.\end{eqnarray}
\end{proposition}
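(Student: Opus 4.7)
My plan is to treat both identities via the same four-step skeleton: (i) expand each trace $T$ by Corollary \ref{more inv} as a sum of $\gamma$- or $\widetilde{\gamma}$-factors over $\widehat{{\rm F}^*/{{\rm F}^*}^d}$; (ii) transport the $\psi_a$-dependence out front as an explicit scalar via the shift formulas \eqref{changepsi} or \eqref{metachangepsi}; (iii) sum over $a\in {\rm F}^*/{{\rm F}^*}^d$ and invoke orthogonality of characters of ${\rm F}^*/{{\rm F}^*}^d$ to collapse the resulting double character sum to a diagonal sum over a single $\eta$; (iv) convert the diagonal terms via \eqref{plan1} or \eqref{plan2} into $\mu_1$ or $\mu_2$ of the twists $\chi\eta$ and apply Theorem \ref{new plan} to assemble $\mu_n(\sigma,s)^{-1}$.

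For the odd case, since $wh(c)w^{-1}=h(c^{-1})$, the character $\chi^{-1}$ extends the central character of $\sigma^w$, so $T(\sigma^w,-s,\psi)=d^{-1}|d|^{1/2}\sum_\eta \gamma(1+s,\chi\eta,\psi)$. Using \eqref{changepsi} I would write
\[
T(\sigma,s,\psi_a)T(\sigma^w,-s,\psi_a)=d^{-2}|d|\,|a|\sum_{\eta_1,\eta_2}(\eta_1\eta_2)(a)\,\gamma(1-s,\chi^{-1}\eta_1,\psi)\gamma(1+s,\chi\eta_2,\psi),
\]
weight by $|a|^{-1}$, sum over $a$, and invoke $\sum_{a\in {\rm F}^*/{{\rm F}^*}^d}(\eta_1\eta_2)(a)=[{\rm F}^*:{{\rm F}^*}^d]$ if $\eta_1\eta_2=1$ and $0$ otherwise. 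After the relabelling $\eta\mapsto \eta^{-1}$, the diagonal sum $\sum_\eta \gamma(1-s,(\chi\eta)^{-1},\psi)\gamma(1+s,\chi\eta,\psi)$ equals $\chi(-1)\sum_\eta \mu_1(\chi\eta,s)^{-1}$ by \eqref{plan1} and the fact that $-1\in {{\rm F}^*}^d$ forces $\eta(-1)=1$. Theorem \ref{new plan} then gives $\chi(-1)\,[{\rm F}^*:{{\rm F}^*}^d]\,\mu_n(\sigma,s)^{-1}$, and division by $[{\rm F}^*:{{\rm F}^*}^d]$ together with $\chi(-1)=\chi_\sigma(-I_2,1)$ finishes the odd case.

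The $n\equiv 2\,(\operatorname{mod}4)$ case follows the same skeleton but requires three extra verifications, which I would carry out before the main calculation. First, using \eqref{weilchangepsi}, \eqref{weilsqyare}, and the identity $\gamma_\psi(c^{-1})=\gamma_\psi(c)^{-1}(-1,c)_2$, I would check that both $(\cdot,a)_2\otimes\sigma$ and $(\cdot,a)_2\otimes\sigma^w$ (under the $\psi_a$-normalization) admit the extending characters $\chi$ and $\chi^{-1}$ respectively, so Corollary \ref{more inv} applies uniformly across the sum. Second, \eqref{metachangepsi} inserts a Weil-index factor $\gamma_\psi(a)$ into each of the two traces; the product $\gamma_\psi(a)^2=(a,-1)_2$ is then killed precisely by the weight $(a,-1)_2$ built into the left-hand side, which is the whole point of that weight. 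Third, after orthogonality, the substitution $\eta\mapsto\eta^{-1}$, and \eqref{plan2}, the resulting scalar has to be reassembled into $\chi_\sigma(-I_2,(-1,-1)_n)$ using $(-1,-1)_n=(-1,-1)_2$ for $n\equiv 2\,(\operatorname{mod}4)$, $\gamma_\psi(-1)^2=(-1,-1)_2$, and $(\chi\eta)_\psi(-1)=\chi(-1)\gamma_\psi(-1)^{-1}$ (again using $\eta(-1)=1$ since $d$ is odd). The main obstacle throughout this second case is exactly this sign bookkeeping: tracking $\mu_4$-valued Weil indices against $\mu_2$-valued Hilbert symbols until the final collected scalar agrees with $\chi_\sigma(-I_2,(-1,-1)_n)$; once these cancellations are in hand, the remainder of the argument is a clean transcription of the odd-case computation with $\gamma$ replaced by $\widetilde{\gamma}$ and \eqref{plan1} by \eqref{plan2}.
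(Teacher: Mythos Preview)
Your proposal is correct and follows essentially the same route as the paper. Both arguments expand each trace via Corollary \ref{more inv}, pull out the $\psi_a$-dependence using \eqref{changepsi} (resp.\ \eqref{metachangepsi}), and then collapse the double sum over $\widehat{{\rm F}^*/{{\rm F}^*}^d}\times\widehat{{\rm F}^*/{{\rm F}^*}^d}$ to a diagonal via character orthogonality on ${\rm F}^*/{{\rm F}^*}^d$; the paper phrases this step as reading off the zeroth Fourier coefficient, which is the same computation. The only organizational difference is in the final identification: the paper concludes by invoking \eqref{plan and sum} together with the intermediate identity \eqref{to show} from the proof of Theorem \ref{new plan}, whereas you pass through \eqref{plan1}/\eqref{plan2} and then Theorem \ref{new plan} itself. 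Since Theorem \ref{new plan} is proved precisely by combining \eqref{plan and sum} with \eqref{to show}, the two endings are equivalent.
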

\begin{proof} Let $\chi$ be a character of ${\rm F}^*$ such that $\chi''_\psi=\chi_\sigma$. We first prove the theorem for odd $n$. Using Corollary \ref{more inv} and Remark \ref{for T prop} along with Equation \eqref{changepsi} we obtain
\begin{eqnarray} \nonumber &&T(\sigma,s,\psi_a) T(\sigma^w,-s,\psi_a) \\ \nonumber &&= [{\rm F}^*:{{\rm F}^*}^d]^{-1}\ab a \ab \sum_{c \in {{\rm F}^*/{{\rm F}^*}^d}} \sum_{b \in {{\rm F}^*/{{\rm F}^*}^d}}\eta_{cb}(a)\gamma(1-s,\chi^{-1}\eta_c,\psi)  \gamma(1+s,\chi\eta_b,\psi).\end{eqnarray}
This shows that $$a\mapsto F_{s,\sigma,\psi}(a)= \ab a \ab^{-1}  T(\sigma,s,\psi_a) T(\sigma^w,-s,\psi_a)$$ is a well defined function on ${{\rm F}^*/{{\rm F}^*}^d}$.
By changing summation index $b \mapsto e=bc$ and then by changing summation order we find the Fourier expansion of $F_{s,\sigma,\psi}(a)$:
$$ F_{s,\sigma,\psi}(a)=  \sum_{e \in {{\rm F}^*/{{\rm F}^*}^d}}\eta_e(a)\widehat{F}_{s,\sigma,\psi}(e),$$
where
$$\widehat{F}_{s,\sigma,\psi}(e)= [{\rm F}^*:{{\rm F}^*}^d]^{-1}\sum_{c \in {{\rm F}^*/{{\rm F}^*}^d}}\gamma(1-s,\chi^{-1}\eta_c,\psi)  \gamma(1+s,\chi\eta_{ec^{-1}},\psi).$$
The $e=1$ term in this expansion is

$$[{\rm F}^*:{{\rm F}^*}^d]^{-1}\sum_{a \in {{\rm F}^*/{{\rm F}^*}^d}} F_{s,\sigma,\psi}(a)=\widehat{F}_{s,\sigma,\psi}(1).$$
We have shown:
$$\sum_{a \in {{\rm F}^*/{{\rm F}^*}^d}} F_{s,\sigma,\psi}(a)=\sum_{c \in {{\rm F}^*/{{\rm F}^*}^d}}\gamma \bigl(1-s,(\chi\eta_c)^{-1},\psi \bigr)  \gamma \bigl(1+s,\chi\eta_c,\psi \bigr).$$
Comparing this with \eqref{plan and sum} and \eqref{to show} the result for odd $n$ now follows.

Assume now that $n \equiv 2\, (\operatorname{mod }4)$. In this case Corollary \ref{more inv} along with Remark \ref{for T prop} imply that
$$T\bigl((\cdot,a)_2 \otimes \sigma,s,\psi_a\bigr)=\sum_{\eta \in \widehat{{\rm F}^*/{{\rm F}^*}^d}} \widetilde{\gamma}(1-s,\chi^{-1}\eta,\psi_a).$$
Using \eqref{metachangepsi} and \eqref{weilsqyare} we now deduce that
$$a \mapsto (a,-1)_2\ab a \ab^{-1}  T(\sigma,s,\psi_a) T(\sigma^w,-s,\psi_a)=$$ $$\sum_{c \in {{\rm F}^*/{{\rm F}^*}^d}} \sum_{b \in {{\rm F}^*/{{\rm F}^*}^d}}\eta_{bc}(a)\widetilde{\gamma}(1-s,\chi^{-1}\eta_b,\psi) \widetilde{ \gamma}(1+s,\chi^{-1}\eta_c,\psi)$$ is a well defined function on ${{\rm F}^*/{{\rm F}^*}^d}$. The rest of the proof goes word for word as the proof in the odd case.
\end{proof}
\begin{remark} Observe that the factor $\ab a \ab^{-1}$ that appears in the left hand side of \eqref{traceandplan1} and  \eqref{traceandplan2} compensates for the dependence of the normalization of the intertwining operators on $\psi_a$.
\end{remark}
\subsection{A reducibility result}
\begin{prop} \label{irrcor} Let $\sigma$ be a  unitary genuine smooth irreducible representation of  $\widetilde{\rm{H}}$. Denote its central character by $\chi_\sigma$. Then, ${\rm I}(\sigma)$ is reducible if and only if $n$ is odd and the restriction of $\chi_\sigma$ to $sec\bigl(\widetilde{C}\bigr) \cong {{\rm F}^*}^n$ is a non-trivial quadratic character. In this case $\sigma$ is a direct sum of two non-isomorphic irreducible representations.
\end{prop}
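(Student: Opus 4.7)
The approach is as follows. By a standard criterion in the theory of tempered representations (Silberger--Harish-Chandra, adapted to covering groups via \eqref{pandef modi}), the unitary principal series ${\rm I}(\sigma)$ is reducible if and only if $\sigma \cong \sigma^w$ and the Plancherel measure $\mu_n(\sigma,0)$ is finite and non-zero; moreover in that case ${\rm A}_w(\sigma_0)^2$ is a non-zero scalar so that ${\rm A}_w(\sigma_0)$ has two distinct eigenvalues $\pm c$ and ${\rm I}(\sigma)$ splits into the corresponding eigenspaces. I would first translate the isomorphism condition: since $w h(a) w^{-1}=h(a^{-1})$ by \eqref{rao}, one has $\chi_\sigma^w=\chi_\sigma^{-1}$ on $\widetilde{C}$, so by Lemma \ref{svn} the condition $\sigma\cong\sigma^w$ is equivalent to $\chi_\sigma^2=1$ on $\widetilde{C}$, equivalently $\chi^2 \in \widehat{{\rm F}^*/{{\rm F}^*}^d}$.

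Next I would analyse $\mu_n(\sigma,0)$ using Theorem \ref{new plan}, which expresses $\mu_n(\sigma,s)^{-1}$ as the average of $\mu_{n/d}\bigl((\chi\eta)_\psi,s\bigr)^{-1}$ over $\eta\in\widehat{{\rm F}^*/{{\rm F}^*}^d}$. A key preparatory observation is that for $d$ odd, i.e.\ for every $n$ covered by the paper, the finite abelian group $\widehat{{\rm F}^*/{{\rm F}^*}^d}$ has odd order (it is a product of a cyclic group of order $d$ with odd-order subquotients coming from $\Of^*$). In particular $\eta\mapsto\eta^2$ is an automorphism of it, and there is a unique $\eta_0$ with $(\chi\eta_0)^2=1$ on all of ${\rm F}^*$.

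The heart of the proof is then a case analysis under the assumption $\sigma\cong\sigma^w$. If $n\equiv 2\pmod{4}$, formula \eqref{plan2} shows that $\mu_2\bigl((\chi\eta_0)_\psi,s\bigr)^{-1}$ contains the factor $L(2s,1)L(-2s,1)$, which has a double pole at $s=0$; the other summands in the averaging formula are regular there, so $\mu_n(\sigma,0)^{-1}=\infty$, forcing $\mu_n(\sigma,0)=0$ and the irreducibility of ${\rm I}(\sigma)$. If $n$ is odd, after replacing $\chi$ by $\chi\eta_0$ (which preserves $\chi_\sigma$) one may assume $\chi^2=1$ on ${\rm F}^*$. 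In the case $\chi_\sigma|_{{{\rm F}^*}^n}=1$ one has $\chi=1$, and the summand $\eta=1$ yields $\mu_1(1,0)^{-1}=\infty$ via the pole of $L(0,1)$ in \eqref{plan1}, so again ${\rm I}(\sigma)$ is irreducible. Finally if $\chi_\sigma|_{{{\rm F}^*}^n}$ is non-trivial quadratic, then $\chi$ is a non-trivial quadratic character of ${\rm F}^*$ which does not lie in the odd-order group $\widehat{{\rm F}^*/{{\rm F}^*}^n}$, so $\chi\eta\neq 1$ for every $\eta$; each $\mu_1(\chi\eta,0)^{-1}$ is therefore finite and, by positivity of linear Plancherel densities on the unitary axis, strictly positive, so the sum is positive and ${\rm I}(\sigma)$ is reducible.

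In the reducibility case the $\pm c$ eigenspaces of ${\rm A}_w(\sigma_0)$ form a $\widetilde{G}$-stable decomposition ${\rm I}(\sigma)=\pi^+\oplus\pi^-$; each component is irreducible since the commuting algebra of ${\rm I}(\sigma)$ is two-dimensional, being spanned by $\mathrm{Id}$ and ${\rm A}_w(\sigma_0)$, and $\pi^+\not\cong\pi^-$ because any $\widetilde{G}$-intertwiner $\pi^+\to\pi^-$ would have to intertwine the scalar actions $+c$ and $-c$ of ${\rm A}_w(\sigma_0)$, forcing it to vanish. The main anticipated obstacle is justifying the strict positivity of the individual linear Plancherel contributions in the non-trivial quadratic subcase; should the general positivity statement be insufficiently explicit in the present normalization, a direct substitute based on \eqref{plan1}, distinguishing the ramified and unramified cases of $\chi\eta$, should suffice.
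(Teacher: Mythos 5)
Your proof follows the same route as the paper: reduce via a Knapp--Stein criterion to the behaviour of $\mu_n(\sigma,s)^{-1}$ at $s=0$, translate $\sigma\cong\sigma^w$ into $\chi^2\in\widehat{{\rm F}^*/{{\rm F}^*}^d}$, and analyze the pole structure through Theorem \ref{new plan} using the oddness of $\#\,\widehat{{\rm F}^*/{{\rm F}^*}^d}$. The one step that is not self-contained in your write-up is the criterion itself: \eqref{pandef modi} is merely the defining scalar relation for the Plancherel measure and does not supply an $R$-group theory for $n$-fold covers, so ``Silberger--Harish-Chandra adapted via \eqref{pandef modi}'' is not a valid justification. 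The paper invokes Savin's extension of the Knapp--Stein dimension theorem to parabolic induction on metaplectic groups, proved in the appendix of \cite{TA}, and that is what you should cite. In the form established there the criterion reads ``$\sigma\cong\sigma^w$ and $\mu_n(\sigma,s)^{-1}$ is analytic at $s=0$,'' so your extra positivity argument ruling out $\mu_n(\sigma,0)=\infty$ is superfluous, though correct. The rest --- the unique $\eta_0\in\widehat{{\rm F}^*/{{\rm F}^*}^d}$ with $\eta_0^2=\chi^{-2}$, the three-way case analysis, and the eigenspace argument showing the two constituents are irreducible and non-isomorphic (a detail the paper simply asserts as part of the Knapp--Stein conclusion) --- is sound and tracks the paper's argument.
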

\begin{proof}
From the Knapp-Stein dimension Theorem extended by Savin in the Appendix of \cite{TA} to a maximal parabolic induction on metaplectic groups, it follows that given  that $\sigma$ is unitary, ${\rm I}(\sigma)$ is reducible if and only if $\sigma \cong \sigma^w$  and $\mu_n(\sigma,s)^{-1}$ is analytic at $s=0$. In this case ${\rm I}(\sigma)$ is the sum of two non-isomorphic irreducible representations. Denote $\chi_\sigma=\chi''_\psi$. Since $\chi_{\sigma^w}={(\chi''^{-1})}_\psi$ it follows that $\sigma \cong \sigma^w$ is equivalent to $\chi''^2=1$. Let $\chi$ be any extension of $\chi''$ to ${\rm F}^*$. Since $\chi''^2=1$ one concludes that the order of $\chi$ divides $2d$.

Suppose first that $n$ is even. In this case $2d=n$. From \eqref{plan2} it follows that there exist $ [{\rm F}^*:{{\rm F}^*}^2] $ identical summands in the right hand side of \eqref{new plan form} which have a pole of order 2 at $s=0$. The rest of the summands are analytic. Thus, $\sigma \cong \sigma^w$ implies that $\mu_n(\sigma,s)^{-1}$ has a pole at $s=0$.

Suppose now that $n$ is odd. If $\chi$ is of order dividing $n$ then $\chi_\sigma$ is trivial and from \eqref{plan1} it follows that exactly one of the summands in the right hand side of \eqref{new plan form} has a pole of order 2 at $s=0$. Thus, $\mu_n(\sigma,s)^{-1}$ has a pole at $s=0$ in this case. However if $\chi''^2=1$ and $\chi''$ is non-trivial then $\chi$ must be of order $2n$. In this case all the summands in the right hand side of \eqref{new plan form} are analytic. This implies that  $\mu_n(\sigma,s)^{-1}$ is analytic at $s=0$.
\end{proof}
\begin{remark} Using a different argument, it was shown in Theorem 5.3.6 of \cite{K} that if $n$ divides $q-1$ then the (unique) reducible unitary unramified genuine principal series representation is a direct sum of two non-isomorphic irreducible representations.
\end{remark}
\subsection{An explicit formula} \label{expfor}
In the case where ${\rm gcd}(n,p)=1$ it was proven in Theorem 5.1 of \cite{GoSz} that for $\sigma$ and $\chi$ as before,
\begin{equation} \label{myold result}\mu_n(\sigma,s)^{-1}= q^{e(\psi)-e(\chi^n)}\frac{L \bigl(ns,\chi^n \bigr)L \bigl(-ns,\chi^{-n}\bigr)}{L \bigl(1-ns,\chi^{-n} \bigr)L \bigl(1+ns,\chi^{n}\bigr)}.\end{equation}
In Section 8.5 of \cite{Gao17} Gao utilized a global-local argument and proved that for any $n$ and for any $p$-adic field ${\rm F}$ containing $\mu_n$,  the zeros and poles of $\mu_n(\sigma,s)^{-1}$ are the identical to the zeros and poles of the right hand side of \eqref{myold result}. Combining  this result with \eqref{new plan form} we now give an explicit formula for $\mu_n(\sigma,s)^{-1}$ provided that $n \not \equiv 0 \, (\operatorname{mod }4)$.
\begin{thm} Let $\sigma$ be a genuine smooth irreducible  representation of  $\widetilde{\rm{H}}$ with a central character  $\chi_\sigma$. Let $\chi$ be a character of ${\rm F}^*$ such that $\chi''_\psi=\chi_\sigma$. Suppose that $n \not \equiv 0 \, (\operatorname{mod }4)$. Then,
$$\mu_n(\sigma,s)^{-1}= c(\sigma)\frac{L \bigl(ns,\chi^n \bigr)L \bigl(-ns,\chi^{-n}\bigr)}{L \bigl(1-ns,\chi^{-n} \bigr)L \bigl(1+ns,\chi^{n}\bigr)}$$
where $c(\sigma)$ is a positive constant given by
$$c(\sigma)=q^{e(\psi_{n/d})}\begin{cases} [{\rm F}^*:{{\rm F}^*}^d]^{-1}\sum_{\eta \in \widehat{{\rm F}^*/ {{\rm F}^*}^d}} q^{-e(\chi^{n/d}\eta^{n/d})} &  \chi^n \operatorname{is} \,  \operatorname{ramified },  \\ \ab d \ab  &  \chi^n \operatorname{is} \,  \operatorname{unramified. } \end{cases}$$
\end{thm}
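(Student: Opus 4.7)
My plan is to combine three tools: the averaging formula in Theorem \ref{new plan}, the explicit formulas \eqref{plan1} and \eqref{plan2} for $\mu_1$ and $\mu_2$, and Gao's global-local result on the zeros and poles of $\mu_n(\sigma,s)^{-1}$. Since $n \not\equiv 0 \pmod{4}$, we have $n/d \in \{1,2\}$, so each summand $\mu_{n/d}((\chi\eta)_\psi,s)^{-1}$ appearing in \eqref{new plan form} is described by \eqref{plan1} or \eqref{plan2}. This exhibits $\mu_n(\sigma,s)^{-1}$ as a finite sum of terms of the form $q^{e(\psi_{n/d})-e((\chi\eta)^{n/d})}$ times a four-factor $L$-ratio for the character $(\chi\eta)^{n/d}$. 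Gao's result identifies the zeros and poles of $\mu_n(\sigma,s)^{-1}$ (as a rational function in $q^{-s}$) with those of the target $L$-ratio for $\chi^n$, so their quotient is a rational function in $q^{-s}$ with neither zeros nor poles on $\mathbb{C}^{\ast}$, hence of the form $c\cdot q^{-ms}$ for some $c\in\mathbb{C}^{\ast}$ and $m\in\mathbb{Z}$.

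To pin down $m$, I would inspect asymptotics at $s\to +\infty$ (that is, $q^{-s}\to 0$): each elementary $L$-factor tends either to $1$ or to an explicit power of $q^{-s}$, with the effect that every four-factor $L$-ratio tends to $q^{-1}$ or $1$ according to whether the underlying character is unramified or ramified. Both the LHS and the target $L$-ratio on the RHS therefore have finite nonzero limits, which forces $m=0$ and reduces the problem to computing the constant $c=c(\sigma)$. If $\chi^n$ is ramified, the relation $((\chi\eta)^{n/d})^d = (\chi\eta)^n = \chi^n$ (using $\eta^d=1$, hence $\eta^n=1$) forces every $(\chi\eta)^{n/d}$ to be ramified, so each summand's $L$-ratio collapses to $1$ in the limit, and the claimed formula for $c(\sigma)$ is read off immediately from the sum.

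The main obstacle lies in the $\chi^n$ unramified case. There, exactly $d$ of the characters $\eta \in \widehat{F^\ast/F^{\ast d}}$ make $(\chi\eta)^{n/d}$ unramified: the map $\eta\mapsto \eta^{n/d}$ is a bijection of $\widehat{F^\ast/F^{\ast d}}$ (since $\gcd(n/d,d)=1$ in both remaining cases), and among the image characters $\xi$ the condition $\chi^{n/d}\xi$ unramified cuts out the coset of the $d$-element subgroup of unramified characters. These summands contribute an extra factor $q^{-1}$ in the limit, while the rest contribute $q^{-e((\chi\eta)^{n/d})}$. Restricting characters to $O^{\ast}$ (with multiplicity $d$) reduces the claim $c(\sigma)=q^{e(\psi_{n/d})}|d|$ to the auxiliary identity
\[
\sum_{\mu\in\widehat{O^{\ast}/(O^{\ast})^d}} q^{-c(\mu)} \;=\; \frac{q+d-1}{q}.
\]
This identity is the heart of the matter, and I expect it to be the main technical hurdle. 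I would prove it by introducing the filtration $G_k := (1+P^k)(O^{\ast})^d/(O^{\ast})^d$ on $G = O^{\ast}/(O^{\ast})^d$, using the elementary fact $|G/G_k|=|H_k[d]|$ for $H_k = O^{\ast}/(1+P^k)$ (an instance of $|H/H^d|=|H[d]|$ for finite abelian $H$), and then telescoping the sum $\sum_{k\geq 0} |G/G_k|\,q^{-k}$, which recovers the required value in both the tame case $\gcd(d,p)=1$ and the wildly ramified cases.
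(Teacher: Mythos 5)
Your strategy is sound and genuinely different from the paper's. Both proofs start from the averaging identity \eqref{new plan form} together with \eqref{plan1}/\eqref{plan2}, and both ultimately rely on Gao's global-local result, but the way Gao is used diverges. The paper evaluates the field-independent rational function $M(q^{-s})$ (the sum over unramified twists) by comparison with the already-computed tame case \eqref{myold result}, then isolates the potentially field-dependent constant $A(\F)$ and uses Gao to force $A(\F)=0$. You instead invoke Gao up front to conclude that the quotient of $\mu_n(\sigma,s)^{-1}$ by the target $L$-ratio is a monomial $c\,q^{-ms}$, pin $m=0$ by finite nonzero limits as $s\to+\infty$, and then compute $c$ directly, which in the $\chi^n$-unramified case reduces (correctly) to the conductor-sum identity $\sum_{\mu\in\widehat{O^{*}/(O^{*})^d}} q^{-c(\mu)}=(q+d-1)/q$. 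That identity is exactly the one the paper records as a \emph{by-product} of its proof, not an input; so your route shifts the burden from a comparison with the tame case to a direct proof of this identity.

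This is where the proposal has a real gap. Your filtration plan---$G_k=(1+\Pf^k)(O^*)^d/(O^*)^d$, $|G/G_k|=|H_k[d]|$, then ``telescoping $\sum_k |G/G_k|q^{-k}$''---does give the right answer instantly in the tame case (there $|H_k[d]|=d$ for all $k\geq1$), but asserting that it ``recovers the required value ... in the wildly ramified cases'' hides the real work: $|H_k[d]|=\gcd(d,q-1)\cdot|(U_1/U_k)[d_p]|$ and the second factor depends on the $p^a$-power map on principal units and on where $\mu_{p^a}$ sits in the filtration; there is no clean recursion (the snake lemma for $0\to\F_q\to H_{k+1}\to H_k\to0$ is vacuous because $|A[d]|=|A/dA|$ on both sides). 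The identity is true, and there is in fact a short proof that sidesteps all of this: writing $\sum_\mu q^{-c(\mu)}=(1-q^{-1})\sum_{k\geq0}|G/G_k|q^{-k}$ and $|H_k[d]|=[V_k:U_k]$ with $V_k=\{x\in O^*:x^d\in U_k\}$, one gets $\sum_{k\geq1}|G/G_k|q^{-k}=\tfrac{q-1}{q}\int_{O^*}v(x^d-1)\,dx$; then $x^d-1=\prod_{\zeta\in\mu_d}(x-\zeta)$ and translation-invariance give $\int_{O^*}v(x^d-1)\,dx=d\cdot q/(q-1)^2$, and the claimed value drops out. If you want to keep your route, you should either prove the identity by this integral computation (or an equivalent), or follow the paper in extracting it from the tame case plus Gao; as written, the telescoping step is a plan, not a proof.
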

\begin{proof} We only give a proof when $n$ is odd. The case $n \equiv 2 \, (\operatorname{mod }4)$ is proven by a similar argument. The case where $\chi^n$ is ramified follows at once from \eqref{new plan form} and \eqref{plan1}.

We move to the unramified case. Since we can twist $\chi$ by $\eta_a$ where $a \in {\rm F}^*$ and shift the complex parameter $s$ we may assume the $\chi=\chi^o$ is the trivial character. Using \eqref{new plan form} and \eqref{plan1} again we have
$$\mu_n(\sigma,s)^{-1}=n^{-2} \ab n \ab \sum_{a \in {{\rm F}^*/ {{\rm F}^*}^n}} \mu_{1} \bigl((\eta_a)_\psi,s \bigr)^{-1}=q^{e(\psi)}n^{-2} \ab n \ab  \times$$ $$\Bigl( \sum_{\substack{a \in {{\rm F}^*}/ {{\rm F}^*}^n\\ \eta_a \text{ is ramified}}}q^{-e(\eta_a)}+\sum_{\substack{a \in {{\rm F}^*}/ {{\rm F}^*}^n\\ \eta_a \text{ is unramified}}}\frac{L (s,\eta_a )L (-s,\chi^{-1}\eta_a)}{L (1-s,\eta_a^{-1} )L (1+s,\eta_a)}\Bigr).$$
For any $p$-adic field, the group of unramified characters whose order divides $n$ is a cyclic group of order $n$ generated by $x \mapsto \ab x\ab^c$ where $q^{-c}$ is a primitive $n^{\text{th}}$ root of 1. Thus,
$$\sum_{\substack{a \in {{\rm F}^*}/ {{\rm F}^*}^n\\ \eta_a \text{ is unramified}}}\frac{L (s,\eta_a )L (-s,\eta_a^{-1})}{L (1-s,\eta_a^{-1} )L (1+s,\eta_a)}=M(q^{-s})$$
where $M$ is a rational function, independent of the $p$-adic field ${\rm F}$. From the above it is also clear that $$\# \{a \in {\rm F}^*/{{\rm F}^*}^n \mid \eta_a \text{ is ramified} \}= [{\rm F}^*:{{\rm F}^*}^n]-n$$
By the second assertion of Lemma \ref{gcd1 stuff}, if ${\rm gcd}(p,n)=1$ then the conductor of any ramified character in $\widehat{{{\rm F}^*/ {{\rm F}^*}^n}}$ is 1. Therefore, we write
$$ \sum_{\substack{a \in {{\rm F}^*}/ {{\rm F}^*}^n\\ \eta_a \text{ is ramified}}}q^{-e(\eta_a)}=(n^2-n)q^{-1}+A({\rm F})$$
where $A({\rm F})$ is a non-negative constant depending on ${\rm F}$ such that $A({\rm F})=0$ provided that ${\rm gcd}(p,n)=1$. Using this notation we have
$$\mu_n(\sigma,s)^{-1}=q^{e(\psi)}n^{-2} \ab n \ab \bigl((n^2-n)q^{-1}+A({\rm F})+M(q^{-s})\bigr).$$
Comparing \eqref{myold result} and  \eqref{new plan form} we observe that
$$(n^2-n)q^{-1}+M(q^{-s})=n^2\frac{L (ns,\chi^o )L (-ns,\chi^o)}{L (1-ns,\chi^o )L (1+ns,\chi^o)}$$
(this may also be verified by a direct elementary computation similar to those given in Section \ref{uram examples}). Finally we obtain
$$\mu_n(\sigma,s)^{-1}=q^{e(\psi)}\ab n \ab \frac{L (ns,\chi^o) L (-ns,\chi^o)}{L (1-ns,\chi^o)L (1+ns,\chi^o)}+q^{e(\psi)}n^{-2} \ab n \ab A({\rm F}).$$
The result of Gao mentioned above now guarantees that  $A({\rm F})=0$ for all the $p$-adic fields in discussion. The theorem for the odd cases is now proven.
\end{proof}
We note here that a by-product of our last proof is the following curious identity which holds for any $p$-adic field ${\rm F}$ containing the full group of $n^{\operatorname{th}}$ roots of 1, where $n$ is odd.
$$\sum_{\eta \in \widehat{{\rm F}^*}, \, \eta^n=1} q^{-e(\eta)}=n(n-1)q^{-1}+n.$$
\subsection{A remark on the $n\equiv 0 \, (\operatorname{mod }4)$ case}
In this paper we did not compute the Slcms for the case where $n\equiv 0 \, (\operatorname{mod }4)$. The technical reason for this is that our method depends on explicit parametrization of the genuine characters of a maximal abelian subgroup of $\widetilde{\rm{H}}$. We could not complete this task when  both $n\equiv 0 \, (\operatorname{mod }4)$ and $p$ divides $n$. Through this paper we have used the fact $[{\rm F}^*:{{\rm F}^*}^d]$ is odd
wherever $n \not \equiv 0 \, (\operatorname{mod }4)$. This does not hold for the $n\equiv 0 \, (\operatorname{mod }4)$ case. In light of Sections 3 and 4 of \cite{GSS} we believe that  this difference is not only technical.

In \cite{GoSz} we have studied the $n\equiv 0 \, (\operatorname{mod }4)$ case under the assumption that ${\rm gcd}(p,n)=1$. In this case ${\rm F}^*/ {{\rm F}^*}^d$ is simple enough so we were able to parameterize the genuine characters of the maximal abelian subgroup of  $\widetilde{\rm{H}}$ arising from the Lagrangian decompression given in Example \ref{kp example}, see the second part of Lemma 1.5 and Section 3.2 in \cite{GoSz}. As a result, an object closely related to an Slcm was computed, see Lemma 4.3 in  \cite{GoSz}. We have used this computation to prove that \eqref{myold result} holds also when $n\equiv 0 \, (\operatorname{mod }4)$  (in which case $\widetilde{\rm{G}}^{(n)}$ splits over $\widetilde{C}^{(n)}$ via the section $sec$) provided that ${\rm gcd}(n,p)=1$. By a similar computations to those presented in Section \ref{uram examples}, one shows that under the assumption ${\rm gcd}(n,p)=1$, \eqref{new plan form} is equivalent to \eqref{myold result}. Thus, \eqref{new plan form} also holds in the case $n \equiv 0\, (\operatorname{mod }4)$ provided that ${\rm gcd}(n,p)=1$. Same is true for Corollary \ref{plan as av}. Using \eqref{myold result} we have shown in Theorem 5.2 of \cite{GoSz} that no reducibilities occur on the unitary axis when $n$ is even and  ${\rm gcd}(n,p)=1$. This result holds also when  ${\rm gcd}(n,p)>1$. Indeed, the $n\equiv 2 \, (\operatorname{mod }4)$ was proven here as Proposition  \ref{irrcor}. The $n\equiv 0 \, (\operatorname{mod }4)$ may be proven by a similar argument to the one used in Theorem 5.2 of \cite{GoSz} replacing \eqref{myold result} with the weaker result of Gao mentioned in Section \ref{expfor} above. Last, \eqref{new new  plan form} holds for $n\equiv 0 \, (\operatorname{mod }4)$ as the argument we used to prove it carries over to this case as well.

We did not check the validity of Proposition \ref{traceandplan} when  $n\equiv 0 \, (\operatorname{mod }4)$ but we believe that assuming that  ${\rm gcd}(n,p)=1$ one can test it by modifying the computations in  \cite{GoSz}.
\bibliography{dani}
\bibliographystyle{acm}

\end{document}